\theoremstyle{plain}
\newtheorem{thm}{Theorem}[section]
\newtheorem{prop}[thm]{Proposition}
\newtheorem{cor}[thm]{Corollary}
\newtheorem{lem}[thm]{Lemma}
\theoremstyle{definition}
\newtheorem{dfn}[thm]{Definition}
\newtheorem{exa}[thm]{Example}
\theoremstyle{remark}
\newtheorem{rmk}[thm]{Remark}
\DeclareMathOperator{\Ima}{Im}
\DeclareMathOperator{\Id}{Id}
\DeclareMathOperator{\Gr}{Gr}
\DeclareMathOperator{\Res}{Res}
\DeclareMathOperator{\ev}{ev}
\def\RW{\operatorname{FJRW}}
\def\GW{\operatorname{GW}}
\def\Orl{\operatorname{Orl}}
\def\ch{\operatorname{ch}}
\def\Db{\text{D}^\text{b}}
\def\DMF{\text{DMF}}
\def\CR{{\mathbb C^*_R}}
\def\C{\mathbb C}
\def\P{\mathbb P}
\def\la{\lambda}
\def\Ocal{\mathcal O}
\begin{document}
	\title{\textbf{Landau--Ginzburg/Calabi--Yau correspondence for a complete intersection via matrix factorizations}}
	\author{Yizhen Zhao}
	\date{}
	\maketitle
	\begin{abstract}
	By generalizing the Landau--Ginzburg/Calabi--Yau correspondence for hypersurfaces, we can relate a Calabi--Yau complete intersection to a hybrid Landau–Ginzburg model: a family of isolated singularities fibered over a projective line. In recent years Fan, Jarvis, and Ruan have defined quantum invariants for singularities of this type, and Clader and Clader--Ross have provided a equivalence between these invariants and Gromov--Witten invariants of complete intersections. For Calabi-Yau complete intersections of two cubics, we show that this equivalence is directly related --- via Chen character --- to the equivalences between the derived category of coherent sheaves and that of matrix factorizations of the singularities. This generalizes Chiodo--Iritani--Ruan's theorem matching Orlov's equivalences and quantum LG/CY correspondence for hypersurfaces. 
	\end{abstract}
	\tableofcontents
\section{Introduction}

The Landau--Ginzburg/Calabi--Yau (LG/CY) correspondence in string theory describes a relationship between the sigma model on a Calabi--Yau hypersurface and the Landau--Ginzburg model whose potential is the defining equation of the Calabi--Yau variety. Following Witten \cite{witten1993phases}, we can present 
the LG/CY correspondence ia a purely algebro-geometric way starting from a variation of stability conditions in geometric invariant theory (GIT). From this point of view, we can naturally generalize the LG/CY correspondence to the Calabi--Yau complete intersections. 

The variation of stability conditions leads to two different curve-counting theories. Analytic continuation naturally allows us to compare them. A natural question arises: what is the interpretation of the linear transformation matching the generating functions encoding the two theories? The answer given in this paper is an equivalence of triangulated categories known as Orlov equivalence applied to the two GIT quotients. More precisely GIT quotients are classically interpreted as chambers and the transition between them is usually phrased in terms of window-transitions. Each of these transitions is related to a specialization of Orlov's functor (see \S \ref{secorlovdef}). This is a mathematical object of independent interest not directly related to curve-counting theories. In particular it sheds new light on the LG/CY correspondence. 


\subsection{The problem}
We start from $r$ homogeneous polynomials $W_1,\dots, W_r$ 
of the same degree $d$ defining a smooth complete intersection in 
$\P^{N-1}$. The complete intersection is Calabi--Yau\footnote{Here the Calabi--Yau condition is meant in the weak sense: the canonical bundle $\omega$ is trivial.} as soon as $dr$ equals $N$. Following a standard procedure (see Witten \cite{witten1993phases}, we also refer to Herbst--Hori--Page \cite{herbst2008phases})
we can cast this setup within a $\C^*$-action as follows. 
Consider a $\mathbb C^*$-action on the vector space $V=\mathbb C^N\times \mathbb C^r=\operatorname{Spec}\mathbb C[x_1,\dots,x_N,p_1,\dots,p_r]$ with weight $1$ on the first $N$ variables, and weight $-d$ on the 
following $r$ variables 
$$\lambda \cdot (x_1,\dots,x_N,p_1,\dots,p_r)= (\la x_1,\dots,\la x_N,\la ^{-d}p_1,\dots,\la^{-d}p_r).$$
GIT provides a systematic description of the geometric quotients that can be obtained 
from the action $\C^*\times V\to V$. Indeed we 
can choose two different GIT stability conditions to identify 
two maximal open sets within $\Omega\subset V$ whose quotient $[\Omega/\C^*]$ 
is a smooth Deligne--Mumford stack. Indeed, for 
$$\Omega_+=V\setminus(x_1=\dots=x_N=0) \qquad \text{and} \qquad \Omega_-=V\setminus(p_1=\dots=p_r=0),$$
we obtain the total space $X_+$ of the vector bundle $\Ocal(-d)^{\oplus r}$ on $\P^{N-1}$ 
and the total space $X_-$ of the vector bundle $\Ocal(-1)^{\oplus N}$ on $\P(d,\dots,d)$ (the weighted projective stack 
with an overall stabilizer $\mu_d$, whose coarse space equals $\P^{r-1}$). 

The Calabi--Yau complete intersection, or rather its cohomology, arises 
as the  cohomology of $X_+$ relative to the generic fiber of 
\begin{equation}\label{diagramcy}
\textstyle{\sum_{j=1}^r p_jW_j\colon X_+\to \C.}
\end{equation}
For $X_-$, the same procedure yields a family of isolated singularities over $\P(d,\dots,d)$:
\begin{equation}\label{diagramlg}
\xymatrix{
	X_- \ar[rr]^{\textstyle{\sum_{j=1}^r p_jW_j}} \ar[d] &&\mathbb C\\
	\P(d,\dots,d)
}.
\end{equation}
We call it a Landau--Ginzburg model $(X_-,W_1,\dots,W_r)$. It is the analogue of the $\mu_d$-invariant polynomial
$$
\xymatrix{
	[\mathbb C^N/ \mu_d] \ar[r]^{W}\ar[d] &\mathbb C\\
	\text{\quad\qquad}B\mu_d=\mathbb P(d)
}
$$
defining the Landau--Ginzburg singularity model of Calabi--Yau hypersurface in \cite{chiodo2014landau,chiodo2010landau}.

The relative cohomology groups $H_{\GW}$ of (\ref{diagramcy}) and $H_{\RW}$ of (\ref{diagramlg}) are isomorphic (see Chiodo--Nagel \cite{chiodo2015hybrid} in higher generality). 
The subject of this paper is an enhanced correspondence in terms of curve-counting theories.

\subsection{Curve-counting theories}

For the Calabi–Yau complete intersection, we consider the Gromov–Witten (GW) theory. For the Landau–Ginzburg model $(X_- , W_1, \dots, W_r)$, a curve-counting theory was constructed by Fan–Jarvis–Ruan in \cite{fan2007witten,fan2008geometry,fan2013witten,fan2015mathematical}. We will use their definition, which we will refer to as FJRW theory, see \cite{fan2015mathematical} and \S \ref{secgt}. Since the first definition of FJRW theory \cite{fan2007witten}, several alternative constructions have been provided: Polishchuk--Vaintrob \cite{polishchuk2011matrix}, Chang--Li--Li \cite{chang2015witten}, Ciocan-Fontanine--Favero--Gu{\'e}r{\'e}--Kim--Shoemaker \cite{ciocan2018fundamental}.

According to the LG/CY correspondence, it is natural to conjecture that the genus-$0$ GW theory of the Calabi--Yau complete intersection and the genus-$0$ FJRW theory of $(X_-,W_1,\dots,W_r)$ are equivalent in the following sense. The genus-$0$ GW theory of the Calabi--Yau complete intersection is determined by a function $I_{\GW}$ taking values in the vector space $H_{\GW}$, and the genus-$0$ FJRW theory of $(X_-,W_1,\dots,W_r)$ is determined by a function $I_{\RW}$ taking values in the vector space $H_{\RW}$. The two theories are equivalent in the sense that $I_{\GW}$ matches $I_{\RW}$ up to an analytic continuation and a linear map. The above conjecture was proven in the hypersurface case (\textit{i.e.} $r=1$) by Chiodo--Ruan \cite{chiodo2010landau}, Chiodo--Iritani--Ruan \cite{chiodo2014landau} and Lee--Priddis--Shoemaker \cite{lee2016proof}, and generalized to certain complete intersection (\textit{i.e.} $r>1$) by Clader \cite{clader2017landau} and Clader--Ross \cite{clader2017sigma}. Chiodo--Iritani--Ruan \cite{chiodo2014landau} provided a geometric interpretation in the hypersurface case in terms of Orlov functor. We focus on the complete intersection case.

 In this paper, we simplify the $I$-functions with the help of $\Gamma$-classes introduced by Iritani \cite{iritani2009integral}, and compute the explicit form of a family of linear maps 
 $$\mathbb U_l\colon H_{\RW} \to H_{\GW}$$ 
 indexed by $l\in \mathbb Z$ relating $I_{\RW}$ and $I_{\GW}$. Then, we can relate $\mathbb U_l$ to equivalences of categories.

\subsection{Relation to equivalences of categories}
Orlov \cite{orlov2009derived} proved that there is a family of equivalences of triangulated categories indexed by $\mathbb Z$ between the derived category $\DMF$ of graded matrix factorizations \cite{orlov2009derived,segal2011equivalences,shipman2012geometric} and the derived category $\Db$ of coherent sheaves on a Calabi--Yau hypersurface. We generalize Orlov's result to the complete intersections. We construct such equivalences $$\Orl_t\colon \DMF \to \Db$$ by compositing two functors. One of them is due to Segal \cite{segal2011equivalences}; the other one is due to Shipman \cite{shipman2012geometric}, see also Isik \cite{isik2013equivalence} for an alternative construction.

A physics paper \cite{herbst2008phases} by Herbst--Hori--Page predicts that the LG/CY correspondence is related to equivalences of categories. In the hypersurface case, it was verified by Chiodo--Iritani--Ruan in \cite{chiodo2014landau}. We study the case of complete intersection of two cubics in $\mathbb P^5$.

\begin{thm}[Theorem \ref{maintheorem}]
	In the case $N=6,d=3,r=2$, we can find a subcategory $\mathcal G$ of $\DMF$, such that the following diagram commutes:
	\begin{equation*}
	\xymatrix{
		&\mathcal G \ar[rrr]^{\otimes \mathcal O(-3)\circ\Orl_{t-3}}\ar[d]_{\ch}& &&\Db\ar[d]^{\ch}\\
		&H_{\RW} \ar[rrr]^{\mathbb U_t}&& &H_{\GW}.
	}
	\end{equation*}
	Here the two vertical arrows represent the Chern character in the corresponding categories.
\end{thm}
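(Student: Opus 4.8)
The plan is to reduce the commutativity of the square to an explicit verification on generators, using the Γ-class/Chern-character formalism on both sides. First I would pin down the category $\mathcal G$: it should be the subcategory of $\DMF$ generated by the matrix factorizations $k^{\mathrm{st}}(i)$ (the "small" stabilizations of the residue field twisted by the grading shift) whose images under $\Orl_{t-3}$, after twisting by $\mathcal O(-3)$, land in the admissible window — concretely, the $\mathcal O(a)$ for $a$ in the length-six window dictated by $t$. Orlov's theorem (as generalized in \S\ref{secorlovdef} by composing Segal's and Shipman's functors) tells us that $\otimes\mathcal O(-3)\circ\Orl_{t-3}$ sends these generators to the line bundles $\mathcal O(a)|_{X_{3,3}}$ on the complete intersection $X_{3,3}\subset\P^5$, so it suffices to check the square on each such generator.

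The key computation is then to match the two Chern characters. On the FJRW side, $\ch$ of a graded matrix factorization unwinds, via the Chern-character-to-state-space identification recalled in \S\ref{secgt}, into a class in $H_{\RW}$ built from the $\Gamma$-class of the Landau--Ginzburg model (\ref{diagramlg}) and the exponential of the shift parameter; on the GW side, $\ch(\mathcal O(a)|_{X_{3,3}})$ is, by Iritani's framework \cite{iritani2009integral}, the $\Gamma$-class of $X_{3,3}$ paired with $e^{aH}$. The linear map $\mathbb U_t$ has already been computed explicitly in the earlier section as a Γ-class ratio twisted by a Gamma-function factor coming from analytic continuation; so the identity $\mathbb U_t\circ\ch = \ch\circ(\otimes\mathcal O(-3)\circ\Orl_{t-3})$ becomes, generator by generator, an identity between products of Gamma functions and powers of $H$. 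This is where I expect the arithmetic to concentrate: one must carefully track the Euler-class/Gamma-class contributions of the two cubic equations (the $r=2$ factors), the weights $(1,1)$ versus $(-3,-3)$ of the $\C^*$-action, and the stabilizer $\mu_3$, and confirm that the reflection $H\mapsto -H$ (or the appropriate root-of-unity substitution) implicit in passing from $X_-$ to $X_+$ intertwines the two sides.

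The main obstacle will be the bookkeeping around the \emph{window shift} and the \emph{twist by} $\mathcal O(-3)$: Orlov's family of equivalences is indexed by $\mathbb Z$, and the precise index $t-3$ (rather than $t$) together with the extra twist is exactly what is needed to align the grading convention on $\DMF$ with the cohomological degree/age grading on $H_{\RW}$ used to define $I_{\RW}$. Getting this offset right — equivalently, checking that the "window" for $\Orl_{t-3}$ is the half-open interval of integers that $\mathbb U_t$ was built from — is the crux; once the windows are matched, commutativity on generators follows from the Gamma-function identities, and commutativity on all of $\mathcal G$ follows because both $\ch$ and the functors are additive on distinguished triangles and $\mathcal G$ is generated by those line-bundle objects. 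I would close by remarking that the same strategy works verbatim for the hypersurface case and recovers Chiodo--Iritani--Ruan's theorem, which serves as a consistency check on the normalizations.
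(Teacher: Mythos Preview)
Your proposal has a genuine gap at its core: the claim that $\otimes\mathcal O(-3)\circ\Orl_{t-3}$ sends the generators of $\mathcal G$ to \emph{single line bundles} $\mathcal O(a)|_{X_{3,3}}$ is false, and this misconception shapes the rest of your plan. In the paper the generators are the Koszul-type factorizations $\mathcal K_-(q)[m]$ (Example~\ref{iptex}), and for all but one value of $t$ the Orlov image $\Orl_t(\mathcal K_-)$ is \emph{not} a line bundle: already $\Orl_2(\mathcal K_-)=\operatorname{cone}\bigl(\mathcal O(6)[-4]\to\mathcal O(7)[-4]^{\oplus 6}\bigr)$, and for higher $t$ one gets increasingly long iterated cones. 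The substantive work in the paper is exactly the machinery you skip over: the ``replaceability'' Lemma and Proposition~\ref{iptpp} give a controlled way to push $\mathcal K_-$ through successive windows, and Proposition~\ref{ahaha} tracks the recursion on underlying bundles so that $\ch(\Orl_t(\mathcal K_-))$ can be computed by a second-order linear recurrence in $t$ with explicit initial data. Without this, you have no handle on the right-hand vertical arrow.

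The second issue is the nature of the final verification. You expect ``identities between products of Gamma functions,'' but by this stage of the argument no Gamma functions remain: the analytic continuation in \S\ref{secanacon} has already produced $\mathbb U_l$ as an explicit matrix in $\zeta=e^{2\pi i/3}$ and $e^p$ (see \eqref{lnmap} and \eqref{mirmatrix}), and the FJRW-side Chern character of $\mathcal K_-(q)[m]$ is obtained via Grothendieck--Riemann--Roch for the zero-section inclusion $j\colon\mathbb P(3,3)\hookrightarrow X_-$ using the Todd class of $\mathcal O(-1)^{\oplus 6}$, not the $\Gamma$-class. The actual match (Proposition~\ref{mp}) is a combinatorial identity among finite sums of the form $\sum_k(-1)^k\binom{6}{k}e^{(k+s)p}$, exploiting $(1-e^p)^6=0$ in $H_{\GW}$; getting the index shift $t\mapsto t-3$ and the twist by $\mathcal O(-3)$ right is indeed delicate, but it is the output of this computation rather than its input.
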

Our method provides an algorithm for any $N,d,r$. Howerer, the complexity of the analytic continuation prevent us from giving a general proof. So we restrict ourselves to the case $N=6,d=3,r=2$,  the simplest case where the complete intersection is a Calabi--Yau 3-fold but not a hypersurface.

\subsection{Organization of the paper}
We introduce GW theory and FJRW theory in \S \ref{sectwotheories}. In \S \ref{secanacon}, we carry out the analytic continuation and get linear maps relating the two theories. We introduce the category of matrix factorizations and construct Orlov functor in \S \ref{secorl}. The main result is proven in \S \ref{seccom}.
\subsection{Acknowledgements}
We would like to thank E. Clader, J. Gu{\'e}r{\'e}, D. Ross and M. Shoemaker for helpful conversations and suggestions.
A special thank to A. Polishchuk for pointing out to us references to Isik \cite{isik2013equivalence}, Segal \cite{segal2011equivalences} and Shipman \cite{shipman2012geometric}. A. Chiodo also participated
crucially in the development of the paper, by way of countless
conversations, support, and advice.
\section{Terminology}

We denote by $\P(w_1,\dots,w_k)$ the weighted projective stack with weights $w_1,\dots,w_k$. It is quotient stack $[(\C^k- \{0\})/\C^*]$, where the $\C^*$-action on $\C^k -\{0\}$ is given by
$$\lambda \cdot (x_1,\dots,x_k)= (\la^{w_1} x_1,\dots,\la^{w_k} x_k).$$
Consider a reductive group $G$, with an action on a scheme $X$, and a chosen linearization $\theta$. We denote by $X_{G.\theta}^{\text{ss}}$ the corresponding semi-stable point set. The corresponding GIT quotient is denoted by $[X/\!\!/\!_{\theta} G]$.






\section{Two parallel theories}\label{sectwotheories}
In this section, we introduce the two parallel theories coming from a variation of stability conditions in geometric invariant theory (GIT). One of them is the genus-$0$ Gromov--Witten (GW) theory of a Calabi--Yau complete intersection, the other one is the genus-$0$ Fan--Jarvis--Ruan--Witten (FJRW) theory for the Landau--Ginzburg model.

\subsection{Input data}\label{input}
Let $W_1,\dots,W_r$ be a collection of degree-$d$ quasihomogeneous polynomials in the variables $x_1,\dots,x_N$, where $x_i$ has weight $w_i$. The weights $w_1,\dots,w_N$ are coprime. We require that the forms $dW_1,\dots,dW_r$ are linearly independent at the common $0$-locus of the polynomials $W_i$, except at the point $x_1=\dots=x_N=0$. Then
$$
W_1=\dots=W_r
$$
defines a complete intersection $X_{d,\dots,d}$ in the weighted projective stack $\mathbb P(w_1,\dots,w_N)$. The weights $w_1,\dots,w_N$ satisfy the Calabi--Yau condition
$$
\sum_{i=1}^N w_i=rd.
$$
By the adjunction formula, $X_{d,\dots,d}$ is Calabi--Yau in the sense that its canonical sheaf is trivial. We further require the Gorenstein condition to be satisfied:
$$
w_i\vert d,\quad 1\le i \le N.
$$
\begin{rmk}
	We recall that the Gorenstein condition is needed for any computation of Gromov--Witten invariants. We refer the reader to \cite{coates2012lefschetzcanfail,guere2016withoutconavity} illustrating that the Lefschetz principle may fail otherwise.
\end{rmk}
Following a standard procedure (see \cite{herbst2008phases,witten1993phases})
we can recast this setup as follows. 
Let $G=\mathbb C^*$, consider a $G$-action on the vector space 
$$V=\mathbb C^N\times \mathbb C^r=\operatorname{Spec}\mathbb C[x_1,\dots,x_N,p_1,\dots,p_r]$$ with weights $w_i$ on the first $N$ variables $x_i$, and weight $-d$ on the 
following $r$ variables 
$$\lambda \cdot (x_1,\dots,x_N,p_1,\dots,p_r)= (\la^{w_1} x_1,\dots,\la^{w_N} x_N,\la ^{-d}p_1,\dots,\la^{-d}p_r).$$
Since  $w_1,\dots,w_N$ are coprime, we can regard $G$ as a subgroup of $\operatorname{GL} (V)$.

There is another $\C^*$-action. We denote this $\C^*$ by $\CR$. The group $\CR$ acts on  
$V$ with weight $0$ on the first $N$ variables, and weight $1$ on the 
following $r$ variables: 
$$\mu \cdot (x_1,\dots,x_N,p_1,\dots,p_r)= ( x_1,\dots, x_N,\mu p_1,\dots,\mu p_r).$$
We can also regard $\CR$ as a subgroup of $\operatorname{GL} (V)$. Let $\Gamma$ be the subgroup of $\operatorname{GL}(V)$ generated by $G$ and $\CR$. Then we have an isomorphism
$$
\Gamma=G\CR \cong G \times \CR.
$$
Denote by $
\xi\colon \Gamma \to G
$ and $
\zeta \colon \Gamma \to \CR
$ the first and second projections.

Set
$$
W=p_1W_1+\dots+p_rW_r,
$$
then $W$ is a function over $V$ invariant under the $G$-action.

\begin{rmk}\label{goodlift}
	It is more natural to start from quasihomogeneous polynomials $W_1,\dots,W_r$ with different degrees $d_1,\dots,d_r$. However, we do not have well-defined enumerative theory (see definition \ref{fjrwt}) in this case due to the lack of a ``good lift''.
	We say a $\Gamma$-character $\hat{\theta}$ is a good lift of a $G$-character $\theta$ if it is compatible with the inclusion $G\le \Gamma$, and satisfies $$V_{\Gamma,\hat{\theta}}^{\text{ss}}=V_{G,\theta}^{\text{ss}} .$$
\end{rmk}

\subsection{Two different GIT quotients}\label{setup}
We consider the GIT quotient of $V$ with respect to the $G$-action. Each character of $G$ defines a linearlization of the trivial line bundle over $V$. There are two types of $G$-characters.
\begin{itemize}
	\item 
We can take a positive $G$-character, \textit{i.e.}
$$
\la \mapsto \la^k, \quad k>0.
$$
We denote the corresponding linearlization by $\theta_+$. Then the semi-stable point set is $$V_{G,\theta_+}^{\text{ss}}=(\C^N-\{0\})\times \C^r.$$ We denote the corresponding GIT quotient $[V/\!\!/\!_{\theta_+} \mathbb G]$ by $X_+$. Then, the quotient stack
$$
X_+=[(\C^N-\{0\})\times \C^r/G]
$$
is the total space of the vector bundle $\mathcal O_{\mathbb P(w_1,\dots,w_N)}(-d)^{\oplus r}$.
\item
We can take a negative $G$-character, \textit{i.e.}
$$
\la \mapsto \la^k, \quad k<0.
$$ 
We denote the corresponding linearlization by $\theta_-$. Then the semi-stable point set is $$V_{G,\theta_-}^{\text{ss}}=\C^N\times (\C^r-\{0\}).$$
 We denote the corresponding GIT quotient  $[V/\!\!/\!_{\theta_-} \mathbb G]$ by $X_-$. Then, the quotient stack
$$
X_-=[\C^N\times (\C^r-\{0\})/G]$$
is the total space of the vector bundle $\bigoplus_{i=1}^N\mathcal O_{\mathbb P(d,\dots,d)}(-w_i)$.
\end{itemize}

Note that $W$ is a function over $V$ which is invariant under the $G$-action. Hence, $W$ is a well-defined function on both $X_+$ and $X_-$. Let $F_\pm$ denote the Milnor fibers $W^{-1}(A)\subset X_\pm$ for a sufficiently large real number $A$.

\subsection{Hybrid theory} \label{secgt}
An enumerative theory is constructed in \cite{fan2015mathematical} for the input data as in \S \ref{input} and a choice of character of $G$. The character can be chosen to be positive or negative as in \S \ref{setup}.

An enumerative theory consists of the data of a state space, moduli spaces, and correlators. The state space is a graded vector space. The correlators are intersection numbers in the moduli spaces; they depend on the insertions coming from the state space.
\subsubsection*{State space}
The state space of the theory is defined to be the relative Chen--Ruan cohomology group (see Appendix) $$H_{\text{CR}}^*(X_\pm,F_\pm,\mathbb C)$$ with an addition shift $-2r$ in grading. So the component with grading $k$ of the state space is $H_{\text{CR}}^{k+2r}(X_\pm,F_\pm,\mathbb C)$. 
\begin{rmk}
	 When we choose a positive character, we have the isomorphisms
	$$
	H_{\text{CR}}^{*+2r}(X_+,F_+,\mathbb C)\cong H^{*+2r}(\mathbb P^{N-1},\mathbb P^{N-1} \backslash X_{d,\dots,d},\C)\cong H^*(X_{d,\dots,d},\C),
	$$
	where the first one comes from contraction, the second one is the Thom isomorphism. Note that $H^*(X_{d,\dots,d},\C)$ is the state space of GW theory for $ X_{d,\dots,d}$.
\end{rmk}
 
 In \cite{fan2015mathematical}, the theory is defined on a subspace of the state space. This subspace consists of classes of so-called compact type. In our situation, both $X_-$ and $X_+$ are total space of vector bundles. We denote the corresponding zero sections by $X_-^{\text{cp}}$ and $X_+^{\text{cp}}$. Following \cite{clader2017higherwallcrossing}, the subspaces of compact type are the image of the morphisms
 $$
 H^{*-2r}_{\text{CR}}(X_\pm,X_\pm\backslash X_\pm^{\text{cp}})\to  H^{*-2r}_{\text{CR}}(X_\pm,F_\pm).
 $$

\subsubsection*{Moduli space}
The moduli space in the theory is the moduli space of the following objects:
\begin{dfn}[\cite{fan2015mathematical}]\label{fjrwt}
An $\infty$-stable, $k$-pointed, genus-$g$ LG-quasimaps to the critical locus of $W$ consists the following data:
\begin{itemize}
	\item A prestable, $k$-pointed orbicurve $(\mathcal C,y_1,\dots, y_k)$ of genus $g$.
	\item A principal orbifold $\Gamma$-bundle $\mathcal P\colon \mathcal C \to B\Gamma$ over $\mathcal C$.
	\item A global section $\sigma \colon \mathcal C \to \mathcal P\times_{\Gamma}V$.
	\item An isomorphism $\kappa \colon \zeta_*\mathcal P\to \mathring{\omega}_{\log,\mathcal C}$ of principal $\mathbb C^*$ bundles, where $\mathring{\omega}_{\log,\mathcal C}$ is the principal bundle associated to the line bundle $\omega_{\log,\mathcal C}$.
\end{itemize} 
such that the following conditions are satisfied:
\begin{enumerate}
	\item The morphism of stack  $\mathcal P\colon \mathcal C \to B\Gamma$ is representable.
	\item\label{cond2} The image of the induced map $[\sigma]\colon \mathcal P \to V$ lies in the semistable locus (with respect to the $G$-action and chosen character) of the critical locus of $W$.
	\item The line bundle $\omega_{\log,\mathcal C}\otimes\sigma^*(\mathcal N)^\epsilon$ is ample for all sufficiently large $\epsilon$, where $\mathcal N$ is the line bundles over $[V/\Gamma]$ determined by a good lift (see remark \ref{goodlift}) of the chosen $G$-character.
\end{enumerate}
\end{dfn}
\begin{rmk}
Since $\Gamma \cong G \times \CR$, and $\zeta$ is just the second projection, giving a principal $\Gamma$ bundle $\mathcal P$ is the same as giving a line bundle $\mathcal L$ such that $\mathcal P \cong \mathring{\mathcal L}\times \mathring{\omega}_{\log,\mathcal C}$. Then we can write
$$\mathcal P\times_{\Gamma}V\cong \bigoplus_{i=1}^N \mathcal L^{\otimes w_i}\oplus \left(\omega_{\log,\mathcal C}\otimes L^{\otimes -d}\right)^{\oplus r}.$$
Thus giving a section of $\mathcal P\times_{\Gamma}V$ is the same as giving sections $s_i\in \Gamma(\mathcal C ,\mathcal L^{\otimes w_i})$ and $t_j\in \Gamma(\mathcal C ,\omega_{\log,\mathcal C}\otimes L^{\otimes -d})$ for $1\le i \le N$ and $1\le j \le r$.
\end{rmk}
In order to determine the semistable locus of the critical locus of $W$, we write
$$dW=\sum_{i=1}^r(p_idW_i+W_idp_i).$$
According to the nondegeneracy condition, the critical locus of $W$ is $$\{x_1=\dots=x_N=0\}\cup\{p_1=\dots=p_r=W_1=\dots=W_r=0\}.$$

\begin{itemize}
	\item When we choose a positive character, condition \ref{cond2} implies $$t_1=\dots=t_r=0$$ 
	and
	$$
	W_1(s_1,\dots,s_N)=\dots=W_r(s_1,\dots,s_N)=0.
	$$ 
	In this case the above data is equivalent to a stable map to the complete intersection $X_{d,\dots,d}$. Note that the theory is only defined on the classes of compact type. It coincides with the classic Gromov--Witten theory of $X_{d,\dots,d}$ restricted to the hyperplane section classes.
	\item When we choose a negative character, condition \ref{cond2} implies $$x_1=\dots=x_N=0.$$ 
	In this case the above data is equivalent to a map $f\colon \mathcal C \to \mathbb P^{r-1}$ together with an isomorphism $\phi \colon \mathcal L^{\otimes d} \cong \omega_{\log,\mathcal C}\otimes f^*\mathcal O(-1)$, and the theory constructed here coincides with Fan--Jarvis--Ruan--Witten theory for $(X_-,W_1,\dots,W_r)$, see \cite{clader2017landau}.
\end{itemize}

In this paper, we focus on the situation where $N=6,r=2,w_1=\dots=w_6=1,d=3,g=0$. It is the simplest  case where the complete intersection $X_{3,3}$ is a Calabi-Yau 3-fold but not a hypersurface. We recall the GW theory of $X_{3,3}$ and the FJRW theory for $(X_-,W_1,W_2)$ in the following sections. We focus on the the GW theory because the FJRW theory is totally parallel.

\subsection{Gromov--Witten theory of $X_{3,3}$} \label{secgwtheory}

In this subsection, We recall the full Gromov--Witten theory of $X_{3,3}$ first, then we restrict ourselves to the part coming from the ambient space $\mathbb P^5$ according to the Lefschetz principle. 
\subsubsection*{Full theory}
Let $\overline{\mathcal M}_{0,n}(X_{3,3},d)$ denote the moduli spaces of genus-$0$ degree-d n-marked stable maps to $X_{3,3}$. For each $s=1,\dots,n$, let 
$$
\ev_s \colon \overline{\mathcal M}_{0,n}(X_{3,3},d)\to X_{3,3}
$$
be the evaluation map at the s-th marked point, and 
$$
\psi_s\in H^2(\overline{\mathcal M}_{0,n}(X_{3,3},d))
$$
be the first Chern class of the universal cotangent line bundle at the s-th marked point. 

The state space of the entire Gromov--Witten theory of $X_{3,3}$ is $H^*(X_{3,3})$. For any choice of $\phi_1,\dots,\phi_n\in H^*(X_{3,3})$, $a_1,\dots,a_n \in \mathbb Z^{\ge 0}$ and $d \in \mathbb Z$, the corresponding Gromov--Witten invariant is defined as
$$\langle \phi_1\psi_1^{a_1},\dots,\phi_{n-1}\psi_{n-1}^{a_{n-1}},\phi_n\psi_n^{a_n}
\rangle_{0,n,d}^{\GW}:=\int_{[\overline{\mathcal M}_{0,n}(X_{3,3},d)]^{\text{vir}}}\prod_{s=1}^n(\psi_s^{a_s}\ev_s^*\phi_s).$$
We can define a generating function 
$$
\mathcal F^0_{\GW}(\boldsymbol{t}):=\sum_{n,d}\frac{Q^d}{n!}\langle \boldsymbol{t}(\psi_1),\dots,\boldsymbol{t}(\psi_{n-1}),\boldsymbol{t}(\psi_n)
\rangle_{0,n,d}^{\GW},
$$
where $\boldsymbol{t}=t_0+t_1z+t_2z^2+\dots\in H^*(X_{3,3})\otimes\mathbb C[z]$.
\begin{rmk}\label{Qe1}
	Let $p\in H^2(X_{3,3})$ be the hyperplane class. Denote the degree-2 part of $t_0$ by $t_0^2p$. We can take $t_0^2p$ out of the bracket repeatedly by divsor equation. Then $Q$ and $t_0^2p$ always appear together in the form $Qe^{t_0^2}$. So from next subsection, we set $Q=1$, and denote $e^{t_0^2}$ by $v$.
\end{rmk}

\subsubsection*{Givental's formalism}
Introduce the supervector space 
$$
\mathcal H^{\GW}=H^*(X_{3,3})\otimes \mathbb C((z^{-1}))
$$
of cohomology-valued Laurent series in $z^{-1}$. We define a symplectic form on $\mathcal H$:
$$
\Omega(f,g)=\Res_{z=0}\left(f(-z),g(z)\right)
$$
where $(\mathord{\cdot},\mathord{\cdot})$ is the Poincar\'e paring on $H^*(X_{3,3})$. In this way $\mathcal H$ is polarized as 
$$
\mathcal H^{\GW}=\mathcal H^{\GW}_+ \oplus \mathcal H^{\GW}_-,
$$
with $\mathcal H^{\GW}_+=H^*(X_{3,3})\otimes \mathbb C[z]$ and $\mathcal H^{\GW}_-=z^{-1}H^*(X_{3,3})\otimes \mathbb C[[z^{-1}]]$, and can be regarded as the total cotangent space of $\mathcal H^{\GW}_+$. An element of $\mathcal H^{\GW}$ can be expressed in Darboux coordinates $\{q^\alpha_k,p_{l,\beta}\}$ as 
$$
\sum_{\substack{\alpha\\k\ge0}} q^\alpha_k \phi_\alpha z^k+\sum_{\substack{\beta\\l\ge0}} p_{l,\beta} \phi^\beta (-z)^{-l-1},
$$
where $\{\phi_\alpha\}$ is a bass for $H^*(X_{3,3})$ and $\{\phi^\beta\}$ is its dual bass under Poincar\'e duality. Set
$$
\boldsymbol{q}=\sum_{\substack{\alpha\\k\ge0}} q^\alpha_k \phi_\alpha z^k;
$$
we regard $\mathcal F^0_{\GW}$ as function on $\mathcal H_+$ after the dilaton shift $\boldsymbol{q}=\boldsymbol{t}-z$. In this way, the genus-$0$ Gromov--Witten theory is encoded by a Lagrangian cone 
$$
\mathcal L_{\GW}=\{(\boldsymbol{p},\boldsymbol{q})\colon \boldsymbol{p}=d_{\boldsymbol{q}} F^0_{\GW}\}\subset T^*\mathcal H_+^{\GW}\cong \mathcal H^{\GW}.
$$
At every point $f\in \mathcal L_{\GW}$, the tangent space $T_f\mathcal L_{\GW}$ satisfies the geometric condition \cite{coates2007quantum}
$$
zT_f\mathcal L_{\GW}=\mathcal L_{\GW}\cap T_f\mathcal L_{\GW}.
$$
Therefore $\mathcal L_{\GW}$ is ruled by a family of subspaces 
$$
\{zL\colon L \text{ is a tangent space to } \mathcal L_{\GW}\}.
$$
The $J$-function $J_{\GW}$ is the $\mathcal H^{\GW}$-valued function of $\tau\in H^*(X_{3,3})$ defined by
$$
J_{\GW}(\tau,-z)=-z+\tau+\sum_{n,d,\alpha}\frac{1}{n!}\left\langle\tau,\dots,\tau,\frac{\phi_\alpha}{-z-\varphi_{n+1}}\right\rangle_{0,n+1,d}^{\GW}\phi^\alpha\in -z+\tau+\mathcal H_-^{\GW}.
$$
It can be interpreted as the intersection of $\mathcal L_{\GW}$ with the slice $\{-z+\tau+\mathcal H_-^{\GW}\}$. According to \cite{coates2007quantum}, the partial derivatives of $J_{\GW}(\tau,-z)$ in directions in $H^*(X_{3,3})$ generate the tangent space $T_{J_{\GW}(\tau,-z)}\mathcal L_{\GW}$; also, the cone $\mathcal L_{\GW}$ is ruled by the family of subspaces
$$
\{zT_{J_{\GW}(\tau,-z)}\mathcal L_{\GW} \colon \tau \in H^*(X_{3,3})\}.
$$
In this sense, the $J$-function $J_{\GW}(\tau,-z)$ determines the cone $\mathcal L_{\GW}$. The small $J$-function is defined by restricting $J_{\GW}(\tau,-z)$ to the degree-2 component. Because the degree-2 component of $H^*(X_{3,3})$ is generated by the hyperplane class $p$, the small $J$-function is a function of $t_0^2$; so it is a function of $v$ (see remark \ref{Qe1}). Because $X_{3,3}$ is a Calabi--Yau 3-fold, the virtual dimension of  $\overline{\mathcal M}_{0,n}(X_{3,3},d)$ is $n$; then we can reconstruct $\mathcal L_{\GW}$ from the small $J$-function using the same argument as in \cite{chiodo2010landau}.
\subsubsection*{Restricted theory}
According to the Lefschetz principle, we only consider the theory defined on the classes generated by the hyperplane section $p$. The state space in this theory is
$$
H_{\GW}=\mathbb C \mathds{1} \oplus \mathbb C p \oplus \mathbb C p^2 \oplus \mathbb C p^3 \subset H^*(X_{3,3}).
$$
Note that this subspace coincides with the subspace of compact type according to \cite{clader2017higherwallcrossing}.
The grading on $H_{\GW}$ agrees with the grading on $H^*(X_{3,3})$,\textit{ i.e.}
$$
\Gr(p^n)=2n.
$$
We define another degree $\deg_0$ by setting 
$$
\deg_0(p^n)=2n.
$$
It is called ``bare'' degree in \cite{chiodo2014landau} because it is the degree without age shift. It agrees with the grading since $X_+$ is a smooth variety and the age shift vanishes here.
\subsubsection*{Twisted theory}
Let $\overline{\mathcal M}_{0,n}(\mathbb P^5,d)$ be the moduli spaces of genus-$0$ degree-d n-marked stable maps to $\mathbb P^5$, $\overline{\mathcal C}_{0,n}(\mathbb P^5,d)$ be the universal curve over it, and $\ev$ be the evaluation map as in the following diagram. 

\begin{equation*}
\xymatrix{&\overline{\mathcal C}_{0,n}(\mathbb P^5,d)\ar[r]^-{\ev}\ar[d]^\pi&\mathbb P^5\\&\overline{\mathcal M}_{0,n}(\mathbb P^5,d)}
\end{equation*}
Then $\overline{\mathcal M}_{0,n}(X_{3,3},d)$ is the intersection of the zero locus of two sections of the vector bundle $\pi_*\ev^*\mathcal O_{\mathbb P^5}(3).$ For any choice of $\phi_1,\dots,\phi_n\in H_{\GW} $, we can rewrite the GW invariants as
\begin{equation}\label{twgw}
\langle \tau_{a_1}(\phi_1),\dots,\tau_{a_{n-1}}(\phi_{n-1}),\tau_{a_n}(\phi_n)
\rangle_{0,n,d}^{\GW}=\int_{\overline{\mathcal M}_{0,n}(\mathbb P^5,d)}e\left(\left(\pi_*\ev^*\mathcal O_{\mathbb P^5}(3)\right)^{\oplus 2}\right)\prod_{s=1}^n(\psi_s^{a_s}\ev_s^*\phi_s).
\end{equation}
We can replace the Euler class in (\ref{twgw}) by any multiplicative characteristic class, we define a twisted theory. We can also define their corresponding symplectic vector spaces, Lagrange cones and $J$-functions like above. There are two special cases:
\begin{itemize}
	\item We twist by equivariant Euler class. We denote its corresponding symplectic vector space, Lagrange cone and $J$-function by $\mathcal H^{{\GW},\text{tw}}$,$\mathcal L_{\GW}^\text{tw}$ and $J_{\GW}^\text{tw}$.
	\item We twist by trivial characteristic class, which is identical $\mathds{1}$. This theory is essentially the Gromov--Witten theory of $\mathbb P^5$. We denote its corresponding symplectic vector space, Lagrange cone and $J$-function by $\mathcal H^{{\GW},\text{un}}$,$\mathcal L_{\GW}^\text{un}$ and $J_{\GW}^\text{un}$.
\end{itemize}
We know the Gromov--Witten theory of $\mathbb P^5$ so we know $J_{\GW}^\text{un}$. There is a modification of $J_{\GW}^\text{un}$, which lies on $\mathcal L_{\GW}^\text{tw}$ and determines $\mathcal L_{\GW}^\text{tw}$ (see \cite{coates2007quantum}); we denote it by $I_{\GW}^{\text{eq}}$. So $J_{\GW}^\text{tw}$ and $I_{\GW}^{\text{eq}}$ determines the same cone $\mathcal L_{\GW}^\text{tw}$. We denote the non-equivariant limit of  $J_{\GW}^\text{tw}$ and $I_{\GW}^{\text{eq}}$ by  $J_{\GW}^\text{tw,noneq}$ and $I_{\GW}$; they also determines the same cone. Finally, we have the relation
$$
e\left(\left(\pi_*\ev^*\mathcal O_{\mathbb P^5}(3)\right)^{\oplus 2}\right)J_{\GW}^\text{tw,noneq}(\tau,z)=j_*J_{\GW}(j^*\tau,z),
$$
where $j\colon X_{3,3}\to \mathbb P^5$ is the inclusion. Then $I_{\GW}$ determines $\mathcal L_{\GW}$ in this sense.

The small $I$-function $I_{\GW}$ was computed in \cite{coates2009quantum}. It is given by:
\begin{equation}\label{IGW}
I_{\GW}(v,z)=zv^{\frac{p}{z}}\sum_{n\ge 0}v^{n}\frac{\left(\prod_{0< b \le 3n}(3p+bz)\right)^2}{\left(\prod_{0< b \le n}(p+bz)\right)^6}.
\end{equation}
It is analytic on $|v|<3^{-6}$.

\subsection{Fan--Jarvis--Ruan--Witten theory of $(\C^6,W_1,W_2)$}
The FJRW theory for $(X_-,W_1,W_2)$ is totally parallel to the GW theory of $X_{3,3}$. It was developed in \cite{clader2017landau}. As defined in \S \ref{secgt}, the full state space of FJRW theory is
$$
H^{*-4}_{\text{CR}}(X_-,F_-)=H^{*-4}(X_-,F_-)\oplus H^{*}(\mathbb P(3,3))\oplus H^{*+4}(\mathbb P(3,3)).
$$
In order to determine the subspace of compact type, we write 
$$
H^{*-4}_{\text{CR}}(X_-,X_-\backslash X_-^{\text{cp}})=H^{*-4}(X_-,X_-\backslash X_-^{\text{cp}})\oplus H^{*}(\mathbb P(3,3))\oplus H^{*+4}(\mathbb P(3,3)).
$$
The morphism 
$$
H^{*-4}_{\text{CR}}(X_-,X_-\backslash X_-^{\text{cp}})\to H^{*-4}_{\text{CR}}(X_-,F_-)
$$
are isomorphisms when restricted to the last two direct summands. It is showed in \cite{chiodo2015hybrid} that $H^k_{\text{CR}}(X_-,F_-)=0$ if $k\ne 7$. So it is a zero morphism  when restricted on the first direct summand. Therefore, the subspace of compact type $H_{\RW}$ is 
$$
H^{*}(\mathbb P(3,3))\oplus H^{*+4}(\mathbb P(3,3)).
$$
It coincides with the ``narrow'' part in \cite{clader2017landau}, where the ``narrow'' part is defined to be the part coming from the compact components. Let $H^{(1)}$ and $H^{(2)}$ be the hyperplane classes in the first and second $\mathbb P(3,3)$, then we can write
$$
H_{\RW}=H^{*}(\mathbb P(3,3))\oplus H^{*}(\mathbb P(3,3))=\mathbb C \mathds{1}^{(1)}\oplus \mathbb C H^{(1)} \oplus \mathbb C \mathds{1}^{(2)}\oplus \mathbb C H^{(2)}.
$$
 The grading on $H_{\RW}$ is given by
$$
\Gr(\mathds 1^{(1)})=0,\Gr( H^{(1)})=2,\Gr(\mathds 1^{(2)})=4,\Gr( H^{(2)})=6.
$$
As in the GW theory,  we define the ``bare'' degree $\deg_0$ by ignoring the age shift (see Appendix), then we have
$$
\deg_0(\mathds 1^{(1)})=\deg_0(\mathds 1^{(2)})=-4,\deg_0(H^{(1)})=\deg_0(H^{(2)})=-2.
$$
The genus-$0$ FJRW theory also depends on the $I$-function. It was computed in \cite{clader2017landau} that
\begin{equation}\label{IFJRW}
\begin{split}
I_{\RW}(u,z)=&\sum_{\substack{d\ge 0\\d \not \equiv -1 \mod 3}}\frac{zu^{d+1+\frac{H^{(d+1)}}{z}}}{3^{6[\frac{d}{3}]}}\frac{\displaystyle \prod_{\substack{0<b\leq d\\ b \equiv d+1 \mod 3}}(H^{(d+1)}+bz)^6}{\displaystyle\prod_{\substack{0<b\leq d}}(H^{(d+1)}+bz)^2}\mathds 1^{(d+1)},\\
\end{split}
\end{equation}
where $H^{(h)}=H^{(h \mod 3)}$ if $h\ge 3$. It is analytic on $|u|<3^2$.
\par
Clader showed in \cite{clader2017landau} that $I_{\RW}$ and $I_{GW}$ satisfy the same degree-$4$ differential equation (respect to the variable $u$) after a change of variables $v=u^{-3}$. This equation is the Picard--Fuchs equation corresponding to $X_{3,3}$. By this argument Clader deduced
\begin{thm}[Clader \cite{clader2017landau}]\label{pp}
	There is a $\mathbb C[z,z^{-1}]$-valued degree-preserving linear transformation mapping $I_{\RW}$ to the analytic continuation of $I_{\GW}$.
\end{thm}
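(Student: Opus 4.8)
\medskip
\noindent The plan is to run the classical ``common Picard--Fuchs equation and analytic continuation'' argument, so that the sought linear transformation is nothing but a connection matrix of a rank-$4$ Fuchsian system, dressed by monomials in $z$. Recall the input from just above the statement: after the change of variables $v=u^{-3}$, both cohomology-valued series $I_{\GW}$ of (\ref{IGW}) and $I_{\RW}$ of (\ref{IFJRW}) are annihilated by one and the same scalar Fuchsian operator $\mathcal L$ of order $4$ in $u$ --- the Picard--Fuchs operator of $X_{3,3}$ --- whose coefficients are rational in $u$ and \emph{independent of $z$}, and whose singular locus on $\P^1_u$ consists of $u=0$, $u=\infty$, and the three roots of $u^3=3^6$. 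On any simply connected domain avoiding these five points the local solutions of $\mathcal L$ form a $4$-dimensional $\C$-vector space, and since a five-punctured sphere is connected, analytic continuation along a fixed path identifies the solution space near $u=0$ with the one near $u=\infty$.

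First I would extract, from the explicit formulas, two Frobenius bases adapted to the two state spaces. Using $p^{4}=0$, the degree-$0$ homogeneity in $(p,z)$ of the hypergeometric factor of (\ref{IGW}), and $v^{p/z}=\exp((p/z)\log v)$, one rewrites $I_{\GW}=\sum_{j=0}^{3} z^{\,1-j}\,\omega_j(v)\,p^{\,j}$, where $\omega_0$ is the holomorphic solution of $\mathcal L$ at $v=0$ and $\omega_1,\omega_2,\omega_3$ are the remaining (logarithmic) members of the Frobenius basis there, so $\{\omega_j\}$ spans the solution space near $v=0$. Symmetrically, using $(H^{(i)})^{2}=0$, the homogeneity in $(H^{(i)},z)$ of the quotient factors of (\ref{IFJRW}), and $u^{H^{(i)}/z}=1+(H^{(i)}/z)\log u$, one rewrites $I_{\RW}=\sum_{i=0}^{3} z^{\,1-i}\,\sigma_i(u)\,\epsilon_i$ with $(\epsilon_0,\epsilon_1,\epsilon_2,\epsilon_3)=(\mathds 1^{(1)},H^{(1)},\mathds 1^{(2)},H^{(2)})$ and $\sigma_0,\sigma_1,\sigma_2,\sigma_3$ solutions of $\mathcal L$ with leading terms $u,\ u\log u,\ u^{2},\ u^{2}\log u$; from these leading exponents their Wronskian is nonzero, so $\{\sigma_i\}$ spans the solution space near $u=0$. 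I would stress that the four ``channels'' carry exactly the same monomial weights $z^{\,1-(\cdot)}$ on both sides and that the functions $\omega_j,\sigma_i$ themselves involve no $z$.

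Next I would transport and assemble. Analytic continuation of $I_{\GW}$ along the chosen path replaces each $\omega_j$ by $\widetilde\omega_j=\sum_i M_{ji}\,\sigma_i$ for a connection matrix $M=(M_{ji})\in\operatorname{GL}_4(\C)$ which is \emph{constant} in $z$ (as $\mathcal L$ has no $z$). Define $\mathbb U\colon H_{\RW}\to H_{\GW}$ to be the $\C[z,z^{-1}]$-linear map with $\mathbb U(\epsilon_i)=\sum_{j} z^{\,i-j}M_{ji}\,p^{\,j}$; comparing the two rewritings channel by channel gives $\mathbb U(I_{\RW})=\widetilde{I_{\GW}}$, the analytic continuation of $I_{\GW}$, and $\mathbb U$ is invertible with entries Laurent monomials in $z$, hence in $\C[z,z^{-1}]$. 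Finally, for the grading $\Gr$ with $\Gr(z)=2$, $\Gr(u)=\Gr(v)=0$, both rewritings are homogeneous of degree $2$ (termwise, using the gradings of \S\,\ref{sectwotheories}: $\Gr(z^{1-j}p^{j})=2$ and $\Gr(z^{1-i}\epsilon_i)=2$); since the entry $z^{\,i-j}M_{ji}$ has degree $2(i-j)=\Gr(\epsilon_i)-\Gr(p^{\,j})$, the map $\mathbb U$ is a morphism of graded $\C[z,z^{-1}]$-modules, i.e.\ degree-preserving. This yields the transformation required by the statement.

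The only points needing genuine (if routine) work are the two Frobenius expansions --- in particular that in each case the four channel functions have nonvanishing Wronskian and thus span the solution space --- together with the homogeneity and monomial bookkeeping; the rest is the standard theory of Fuchsian ODEs. So for this \emph{existence} statement there is no essential obstacle: the genuinely hard task --- making the connection matrix $M$, and with it $\mathbb U$, fully explicit --- is precisely the content of \S\,\ref{secanacon} and is not needed here.
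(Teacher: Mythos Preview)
Your proposal is correct and follows exactly the route the paper sketches (and attributes to Clader): both $I$-functions are annihilated by the same order-$4$ Picard--Fuchs operator after $v=u^{-3}$, their cohomology components give Frobenius bases of the solution space at the two singular points, and the connection matrix of analytic continuation --- dressed by the monomials $z^{i-j}$ to restore the grading --- furnishes the desired $\C[z,z^{-1}]$-valued degree-preserving transformation. The paper itself gives no further detail than the one-line reduction to the common differential equation, so your fleshing-out of the Frobenius expansions $I_{\GW}=\sum_{j}z^{1-j}\omega_j(v)p^{j}$ and $I_{\RW}=\sum_{i}z^{1-i}\sigma_i(u)\epsilon_i$ and the grading check is more than what the paper records; it is the standard argument, and there is nothing to add.
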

Clader only showed the existence of such linear map. In the next section, we will simplify the $I$-functions, and get a family of explicit $\mathbb C$-valued linear maps relating the simplified $I$-functions by a different method. This will allow us to relate these linear maps to equivalences of certain categories in \S \ref{seccom}.

\section{Analytic continuation}\label{secanacon}
	In this section, we introduce the $\mathfrak H$-functions, which are constant linear transform of the $I$-functions. Then we compute the analytic continuation of $\mathfrak H_{\GW}$ and compare it with $\mathfrak H_{\RW}$. In this way we find a linear map 
	$$
	\mathbb U\colon H_{\RW} \to H_{\GW}
	$$
	which identifies $\mathfrak H_{\RW}$ with the analytic continuation $\mathfrak H_{\GW}$.
\subsection{The $\mathfrak H$-functions}
	 We introduce the $\mathfrak H$-functions as in \cite{chiodo2014landau}. 
	 In both GW theory and FJRW theory, the $\mathfrak H$-function is defined by the formula
	 \begin{equation}\label{hdf}
	I=z^{-\frac{\Gr}{2}}\left(\Gamma\cdot(2\pi i)^{\frac{\deg_0}{2}}\mathfrak H\right),
	\end{equation}
	where $I$, $\Gr$ and $\deg_0$ are the $I$-function, grading and ``bare'' degree in the corresponding theory; $\Gamma$ is a chosen class in the corresponding state space.
	
\subsubsection*{Computation of $\mathfrak H_{\GW}$}
	The class $\Gamma_{\GW}$ is chosen to be the Gamma class (see Appendix) of the tangent bundle of $X_{3,3}$. Using the  exact sequence 
	$$
	0\to i^*\mathcal O_{\mathbb P^5}(-3)^{\oplus 2} \to i^*\Omega_{\mathbb P^5}\to \Omega_{X_{3,3}}\to 0
	$$
	and the Euler sequence
	$$
	0 \to \Omega_{\mathbb P^5} \to \mathcal O_{\mathbb P^5}(-1)^{\oplus 6} \to \mathcal O_{\mathbb P^5}\to 0
	$$
	we get
	\begin{equation*}
	\Gamma_{\GW}=\frac{\Gamma(1+p)^6}{\Gamma(1+3p)^2}.
	\end{equation*}
	We rewrite (\ref{IGW}) as
	\begin{equation*}
		\begin{split}
		I_{\GW}(v,z)
			&=\sum_{n \ge 0}zv^{\frac{p}{z}+n}\frac{\Gamma(\frac{p}{z}+1)^6\Gamma(\frac{3p}{z}+3n+1)^2}{\Gamma(\frac{3p}{z}+1)^2\Gamma(\frac{p}{z}+n+1)^6}.\\
		\end{split}
	\end{equation*}
	
	Then
	\begin{equation*}
		\begin{split}
				I_{\GW}(v,z)&=z^{-\frac{\Gr}{2}}\sum_{n\ge 0} zv^{p+n} \frac{\Gamma(p+1)^6}{\Gamma(3p+1)^2}\frac{\Gamma(3p+3n+1)^2}{\Gamma(p+n+1)^6}\\
				&=z^{-\frac{\Gr}{2}}\left(\Gamma_{\GW}\cdot(2\pi i)^{\frac{\deg_0}{2}}\sum_{d\ge 0}zv^{\frac{p}{2\pi i}+n}\frac{\Gamma(3\frac{p}{2\pi i}+3n+1)^2}{\Gamma(\frac{p}{2\pi i}+n+1)^6}\right).
		\end{split}
	\end{equation*}
	Compare with (\ref{hdf}), we have
	\begin{equation}
		\mathfrak H_{\GW}(v,z)=\sum_{n\ge 0}zv^{\frac{p}{2\pi i}+n}\frac{\Gamma(3\frac{p}{2\pi i}+3n+1)^2}{\Gamma(\frac{p}{2\pi i}+n+1)^6}.
	\end{equation}

\subsubsection*{Computation of $\mathfrak H_{\RW}$}
	The Gamma class $\Gamma_{\RW}$ is chosen to be the narrow part of the Gamma class of the tangent bundle of $\mathcal O_{\mathbb P(3,3)}(-1)^{\oplus 6}$, which is
	\begin{equation}
	\Gamma_{\RW}=\Gamma\left(\frac{2}{3}-\frac{H^{(1)}}{3}\right)^6\Gamma(1+H^{(1)})^2\mathds{1}^{(1)}+\Gamma\left(\frac{1}{3}-\frac{H^{(2)}}{3}\right)^6\Gamma(1+H^{(2)})^2\mathds{1}^{(2)}.
	\end{equation}
	We can rewrite (\ref{IFJRW}) as
	\begin{equation}
		\begin{split}
			I_{\RW}(u,z)=&z\sum_{\substack{d\ge 0 \\ d\not\equiv -1 \mod 3}}u^{d+1+\frac{H^{(d+1)}}{z}}z^{-6\langle\frac{d}{3}\rangle}\frac{\Gamma(\frac{H^{(d+1)}}{3z}+\frac{d}{3}+\frac{1}{3})^6\Gamma(\frac{H^{(d+1)}}{z}+1)^2}{\Gamma(\frac{H^{(d+1)}}{3z}+\langle\frac{d}{3}\rangle+\frac{1}{3})^6\Gamma(\frac{H^{(d+1)}}{z}+d+1)^2}\mathds 1^{(d+1)}\\
			=&z\sum_{\substack{d\ge 0 \\ d\equiv 0 \mod 3}}		u^{d+1+\frac{H^{(1)}}{z}}\frac{\Gamma(\frac{H^{(1)}}{3z}+\frac{d}{3}+\frac{1}{3})^6\Gamma(\frac{H^{(1)}}{z}+1)^2}{\Gamma(\frac{H^{(1)}}{3z}+\frac{1}{3})^6\Gamma(\frac{H^{(1)}}{z}+d+1)^2}\mathds 1^{(1)}	\\
			&+z\sum_{\substack{d\ge 0 \\ d\equiv 1 \mod 3}}u^{d+1+\frac{H^{(2)}}{z}}z^{-2}\frac{\Gamma(\frac{H^{(2)}}{3z}+\frac{d}{3}+\frac{2}{3})^6\Gamma(\frac{H^{(2)}}{z}+1)^2}{\Gamma(\frac{H^{(2)}}{3z}+\frac{1}{3})^6\Gamma(\frac{H^{(2)}}{z}+d+1)^2}\mathds 1^{(2)}.
		\end{split}
	\end{equation}
		
	By (\ref{hdf}) we have 
	\begin{equation}\label{ohrw}
		\begin{split}
		(2\pi i)^{-2}\mathfrak H_{\RW}(u,z)=&z\sum_{\substack{d\ge 0 \\ d\equiv 0 \mod 3}}		u^{d+1+H^{(1)}}\frac{\Gamma(1+H^{(1)})^2}{\Gamma(1+H^{(1)})^2}\cdot\frac{\Gamma(\frac{d}{3}+\frac{1}{3}+\frac{H^{(1)}}{3})^6}{\Gamma(\frac{1}{3}+\frac{H^{(1)}}{3})^6\Gamma(\frac{2}{3}-\frac{H^{(1)}}{3})^6\Gamma(d+1+H^{(1)})^2}\mathds 1^{(1)}	\\
		&+z\sum_{\substack{d\ge 0 \\ d\equiv 1 \mod 3}}u^{d+1+H^{(2)}}\frac{\Gamma(1+H^{(2)})^2}{\Gamma(1+H^{(2)})^2}\cdot\frac{\Gamma(\frac{d}{3}+\frac{1}{3}+\frac{H^{(2)}}{3})^6}{\Gamma(\frac{2}{3}+\frac{H^{(2)}}{3})^6\Gamma(\frac{1}{3}-\frac{H^{(2)}}{3})^6\Gamma(d+1+H^{(2)})^2}\mathds 1^{(2)}\\
		=&z\sum_{\substack{d\ge 0 \\ d\equiv 0 \mod 3}}		u^{d+1+H^{(1)}}\frac{\Gamma(\frac{d}{3}+\frac{1}{3}+\frac{H^{(1)}}{3})^6}{\Gamma(\frac{1}{3}+\frac{H^{(1)}}{3})^6\Gamma(\frac{2}{3}-\frac{H^{(1)}}{3})^6\Gamma(d+1+H^{(1)})^2}\mathds 1^{(1)}\\
		&+z\sum_{\substack{d\ge 0 \\ d\equiv 1 \mod 3}}u^{d+1+H^{(2)}}\frac{\Gamma(\frac{d}{3}+\frac{1}{3}+\frac{H^{(2)}}{3})^6}{\Gamma(\frac{2}{3}+\frac{H^{(2)}}{3})^6\Gamma(\frac{1}{3}-\frac{H^{(2)}}{3})^6\Gamma(d+1+H^{(2)})^2}\mathds 1^{(2)}.
		\end{split}
	\end{equation}

	\subsection{Linear maps relating the $\mathfrak H$-functions}
	We can regard $\mathfrak H_{\GW}$ as function of $\log v$ by writing $v$ as $e^{\log v}$. Then $\mathfrak H_{\GW}$ is analytic on $\Re (\log v)<-6\log 3$. In the same way we can regard $\mathfrak H_{\RW}$ as function of $\log u$. Then $\mathfrak H_{\RW}$ is analytic on $\Re (\log v)>-6\log 3$ after a change of variable $\log v=-3\log u$. We can extend $\mathfrak H_{\GW}$ analytically to the right side of the line $\Re(\log v) = -6\log 3$ through the window $w_l$ as in figure \ref{path}, and compare it with $\mathfrak H_{\RW}$. In fact, they are related by the following linear maps:
	\begin{figure}[htbp] 
		\begin{center} 
			\begin{picture}(300,160)  
			\put(150,20){\makebox(0,0){$\circ$}}
			\put(150,60){\makebox(0,0){$\circ$}}  
			\put(150,100){\makebox(0,0){$\circ$}}  
			\put(150,140){\makebox(0,0){$\circ$}} 
			\put(150,0){\dashline[2]{5}(0,0)(0,70)}
			\put(150,102){\dashline[2]{5}(0,0)(0,65)}
			
			
			\put(175,20){\makebox(0,0){\scriptsize $2(l-2)\pi i$}}
			\put(175,60){\makebox(0,0){\scriptsize $2(l-1)\pi i$}}
			\put(164,100){\makebox(0,0){\scriptsize $2l \pi i$}}
			\put(175,140){\makebox(0,0){\scriptsize $2(l+1)\pi i$}}
			\put(147,-10){\makebox(0,0){\scriptsize $\Re(\log v) = -6\log 3$}}
			
			\put(70,80){\makebox(0,0){$\mathfrak H_{\GW}$}}
			\put(230,80){\makebox(0,0){$\mathfrak H_{\RW}$}} 
			\put(150,80){\makebox(0,0){window $w_l$}} 
			
			\put(150,85){\vector(0,1){13}}
			\put(150,75){\vector(0,-1){13}}
			
			\matrixput(10,7)(20,0){7}(0,20){8}{$\cdot$}
			\end{picture} 
		\end{center} 
		\caption{the $(\log v)$-plane.}
		\label{path} 
	\end{figure}
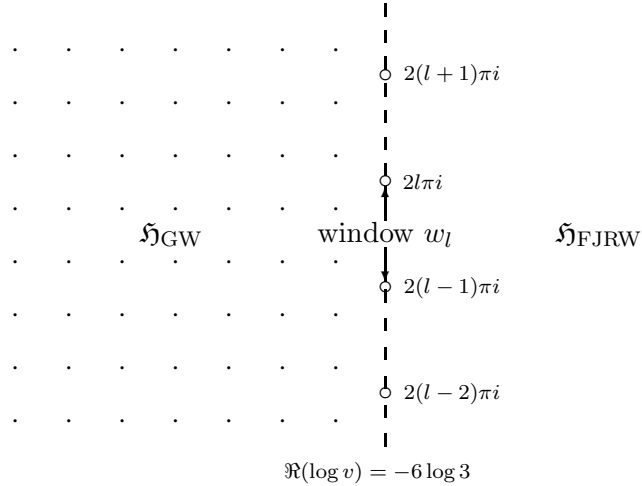

	\begin{dfn}
		For each $l\in \mathbb Z$, the linear map $\mathbb U_l\colon H_{\RW} \to H_{\GW}$ is defined by
		\begin{equation}\label{lnmap}
		\begin{split}
		&\mathds 1^{(1)} \mapsto\frac{l}{9}\frac{\left(\zeta e^{p}\right)^l}{1-\zeta e^{p}}
		+\frac{1}{9}\frac{\left(\zeta e^{p}\right)^{l+1}}{\left(1-\zeta e^{p}\right)^2} \\
		& H^{(1)} \mapsto \frac{1}{3}\frac{\left(\zeta e^{p}\right)^l}{1-\zeta e^{p}}\\
		&\mathds 1^{(2)} \mapsto \frac{l}{9}\frac{\left(\zeta^2e^{p}\right)^l}{1-\zeta^2 e^{p}}
		+\frac{1}{9}\frac{\left(\zeta^2e^{p}\right)^{l+1}}{\left(1-\zeta^2e^{p}\right)^2}\\
		& H^{(2)} \mapsto \frac{1}{3}\frac{\left(\zeta^2 e^{p}\right)^l}{1-\zeta^2 e^{p}}
		\end{split}
		\end{equation}
		where $\zeta=e^{\frac{2\pi i}{3}}$.
	\end{dfn}

	\begin{thm} \label{maincomputation}
		For every $l\in \mathbb Z$, $\mathbb U_l(\mathfrak{H}_{\RW}(u,z))$ coincides with the analytic continuation of $\mathfrak{H}_{\GW}(v,z)$ through the window $w_l$ after the change of variable $\log v=-3\log u$.
	\end{thm}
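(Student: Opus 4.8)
The plan is to adapt the Mellin--Barnes method used by Chiodo--Iritani--Ruan in the hypersurface case. Write $\tilde p = p/(2\pi i)$ and treat $\tilde p$ (and the classes $H^{(j)}$) as formal nilpotent parameters. First I would represent $\mathfrak H_{\GW}$ as a Barnes-type integral
$$
\mathfrak H_{\GW}(v,z)=\frac{1}{2\pi i}\int_{\gamma} z\,v^{\tilde p+s}\,\frac{\Gamma(3\tilde p+3s+1)^2}{\Gamma(\tilde p+s+1)^6}\,\rho(s)\,ds ,
$$
where $\rho$ is a trigonometric kernel with simple poles of residue $1$ along $\mathbb Z_{\ge 0}$ (suitably chosen so that the contour integral converges, e.g. a variant of $\pi/\sin(\pi s)$), and $\gamma$ is a vertical contour separating $\mathbb Z_{\ge 0}$ from the poles of $\Gamma(3\tilde p+3s+1)^2$. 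Closing $\gamma$ to the right reproduces, up to a choice of branch for $v$, the defining series of $\mathfrak H_{\GW}$ on $\Re(\log v)<-6\log 3$; since the numerator and denominator of Gamma factors balance, Stirling's estimate together with the decay of $\rho$ gives absolute convergence of the integral for $\log v$ in a strip of finite width, and the integral then furnishes the analytic continuation of $\mathfrak H_{\GW}$. Because a single such integral only reaches one strip, I would pass to an arbitrary window $w_l$ by composing with the local (unipotent) monodromy at the conifold point $v=3^{-6}$ iterated $l$ times; this is the origin of the dependence on $l$, and in particular the explicit coefficient $l$ in the first term of $\mathbb U_l(\mathds{1}^{(r+1)})$ (as opposed to the power $(\zeta^{r+1}e^p)^l$) reflects the $l$-fold application of the conifold transvection to the logarithmic solution.

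To read off the continuation on $\Re(\log v)>-6\log 3$ I would close $\gamma$ to the left. The poles crossed are $s_m=-\tilde p-(m+1)/3$, $m\ge 0$; they are double poles (from the square $\Gamma(3\tilde p+3s+1)^2$), except that for $m\equiv 2 \pmod 3$ the order-six zero of $1/\Gamma(\tilde p+s+1)^6$ cancels them. Hence only $m\equiv 0,1 \pmod 3$ contribute, which is exactly the index restriction $d\not\equiv -1 \pmod 3$ in $I_{\RW}$, and the classes $m=3k$ and $m=3k+1$ will feed the $\mathds{1}^{(1)}$- and $\mathds{1}^{(2)}$-components respectively. Each double-pole residue, computed as $\frac{d}{ds}\big[(s-s_m)^2\,(\text{integrand})\big]\big|_{s=s_m}$, splits into a piece proportional to $\log v$ (from $v^{\tilde p+s}$) and a piece without it (from the Gamma ratio and $\rho$, carrying digamma values). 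The point making the CY hyperplane class reappear is that, while $v^{\tilde p+s_m}=v^{-(m+1)/3}$ retains no $\tilde p$, the kernel $\rho(s_m)$ does: $\sin(\pi\tilde p+\pi(m+1)/3)$ expands, via $\pi\tilde p=p/(2i)$, into combinations of $e^{\pm p/2}$, producing denominators of the shape $1-\zeta^{r+1}e^{p}$ with $\zeta=e^{2\pi i/3}$ and $r=m\bmod 3$. After the substitution $\log v=-3\log u$ one has $v^{-(m+1)/3}=u^{m+1}$ on the relevant branch.

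Finally I would resum over $k$. Using the triplication and reflection formulae for $\Gamma$, the $k$-dependent factors of the residues reassemble into the $d$-th coefficient of $\mathfrak H_{\RW}$ in (\ref{ohrw}), applied to $\mathds{1}^{(r+1)}$ and, through the $\log u$ piece, to $H^{(r+1)}$ under the identification $\tfrac{2\pi i}{3}H^{(r+1)}\leftrightarrow p$; the $k$-independent prefactors collapse into geometric series in $u^3$, and the $l$-fold conifold monodromy then inserts the branch factors $(\zeta^{r+1}e^{p})^{l}$ and the unipotent shift, yielding precisely $\tfrac{l}{9}\tfrac{(\zeta^{r+1}e^{p})^{l}}{1-\zeta^{r+1}e^{p}}+\tfrac{1}{9}\tfrac{(\zeta^{r+1}e^{p})^{l+1}}{(1-\zeta^{r+1}e^{p})^{2}}$ on $\mathds{1}^{(r+1)}$ and $\tfrac{1}{3}\tfrac{(\zeta^{r+1}e^{p})^{l}}{1-\zeta^{r+1}e^{p}}$ on $H^{(r+1)}$, i.e. $\mathbb U_l$ as in (\ref{lnmap}). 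I expect the main obstacle to be this last step: the double-pole Laurent expansions with their sixfold and twofold products and digamma bookkeeping, and checking that the geometric resummation together with the $l$-fold monodromy really conspire into the closed forms of (\ref{lnmap}). The one genuinely delicate conceptual point is organizing the branch and monodromy data so the window $w_l$ is matched to the correct powers of $\zeta$ and the correct unipotent shift; getting that wrong would change the answer by an overall monodromy.
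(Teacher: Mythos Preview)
Your overall strategy---Mellin--Barnes representation, close the contour to the left, pick up the double poles of $\Gamma(3\tilde p+3s+1)^2$ at $s=-\tilde p-(m+1)/3$ with $m\not\equiv 2\pmod 3$, and match the resulting residue series against the expansion of $\mathfrak H_{\RW}$---is exactly the route the paper takes. The residue bookkeeping you anticipate (logarithmic part from $v^{\tilde p+s}$, digamma terms from the Gamma ratio, the $p$-dependence entering through $\sin(\pi s)$ evaluated at $s_m$) is also what happens in the paper.

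The one point where you diverge, and where your plan is more awkward than necessary, is the handling of the window parameter $l$. You propose fixing a single kernel $\rho$, obtaining the continuation through one window, and then reaching $w_l$ by iterating the conifold monodromy $l$ times. The paper instead builds $l$ into the kernel from the start: it takes
\[
F_l(s)=z\,e^{(\tilde p+s)\log v}\,\frac{\Gamma(3\tilde p+3s+1)^2}{\Gamma(\tilde p+s+1)^6}\cdot\frac{\pi}{\sin(\pi s)}\cdot e^{-(2l-1)\pi i s}.
\]
The extra exponential does two things at once. First, it forces the contour integral to converge precisely on the strip $|\Im(\log v)-(2l-1)\pi|<\pi$, i.e.\ exactly the window $w_l$ (via Horja's estimate). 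Second, it leaves the residues at $s\in\mathbb Z$ equal to $1$ (since $(-1)^k e^{-(2l-1)\pi i k}=1$), so closing to the right still reproduces $\mathfrak H_{\GW}$, while on the left the factor $e^{-(2l-1)\pi i s_m}$ injects the powers $(\zeta^{r+1}e^p)^l$ and the linear-in-$l$ term directly into the residue. Thus the coefficient $l/9$ and the power $(\zeta^{r+1}e^p)^l$ both fall out of a single residue computation, with no separate monodromy argument needed. Your monodromy-iteration route is not wrong in principle, but it adds a layer you would then have to unwind, and you have not said how you would compute the transvection; the paper's $l$-dependent kernel sidesteps this entirely.

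One further detail you should include: the kernel $\pi/\sin(\pi s)$ has poles at \emph{all} integers, not just $\mathbb Z_{\ge 0}$. The paper disposes of the negative-integer contributions by observing that at $s=n<0$ the ratio $\Gamma(3\tilde p+3n+1)^2/\Gamma(\tilde p+n+1)^6$ is divisible by $p^4$, hence vanishes in $H_{\GW}$. If you try to engineer a kernel with poles only on $\mathbb Z_{\ge 0}$ you will have trouble getting the required decay on vertical lines; it is simpler to allow the extra poles and kill them by the dimension argument.
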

	\begin{rmk}
		We can write down the explicit linear map in theorem \ref{pp} if we recovery the $I$-functions from the $\mathfrak H$-functions by (\ref{hdf}).
	\end{rmk}
	\begin{rmk}
		Using
		\begin{equation*}
		\begin{split}
		\frac{1}{1-x}&=1+x+x^2+x^3+\dots,\\
		\frac{1}{(1-x)^2}&=1+2x+3x^3+\dots,
		\end{split}
		\end{equation*}
		we can add formal elements $\mathds{1}^{(0)}$ and $H^{(0)}$, and rewrite (\ref{lnmap}) as
		\begin{equation}\label{mirmatrix}
		\begin{split}
		\begin{pmatrix}
		\mathds 1^{(0)} &\mathds 1^{(1)}&\mathds 1^{(2)}
		\end{pmatrix} &\mapsto \frac{1}{9}e^{pl}
		\begin{pmatrix}
		1 & e^p & e^{2p} & e^{3p} \dots 
		\end{pmatrix}
		\begin{pmatrix}
		l&l&l\\
		l+1&(l+1)\zeta&(l+1)\zeta^2\\
		l+2&(l+2)\zeta^2&(l+2)\zeta\\
		l+3&l+3&l+3\\
		l+4&(l+4)\zeta&(l+4)\zeta^2\\
		l+5&(l+5)\zeta^2&(l+5)\zeta\\
		\vdots&\vdots&\vdots
		\end{pmatrix}
		\begin{pmatrix}
		1&&\\
		&\zeta^l&\\
		&&\zeta^{2l}
		\end{pmatrix}\\
		\begin{pmatrix}
		H^{(0)} &H^{(1)}&H^{(2)}
		\end{pmatrix} &\mapsto \frac{1}{3}e^{pl}
		\begin{pmatrix}
		1 & e^p & e^{2p} & e^{3p} \dots 
		\end{pmatrix}
		\begin{pmatrix}
		1&1&1\\
		1&\zeta&\zeta^2\\
		1&\zeta^2&\zeta\\
		1&1&1\\
		1&\zeta&\zeta^2\\
		1&\zeta^2&\zeta\\
		\vdots&\vdots&\vdots
		\end{pmatrix}
		\begin{pmatrix}
		1&&\\
		&\zeta^l&\\
		&&\zeta^{2l}
		\end{pmatrix}.
		\end{split}
		\end{equation}
	\end{rmk}
	\begin{proof}[Proof of Theorem \ref{maincomputation}]
	For $l\in \mathbb Z$, consider the function 
	$$
	F_l(s)=ze^{(\frac{p}{2\pi i}+s)\log v}\cdot\frac{\Gamma(3\frac{p}{2\pi i}+3s+1)^2}{\Gamma(\frac{p}{2\pi i}+s+1)^6}\cdot \frac{\pi}{\sin (\pi s)}\cdot e^{-(2l-1)\pi i s}.
	$$ 
	The poles of $F_l(s)$ are of the form $s=k\in \mathbb Z$ or  $3\frac{p}{2\pi i}+3s+1=-d\in \mathbb Z^{\ge 0},d\equiv 0,1 \mod 3$. They are represented by the black dots in figure \ref{contour}. 
	\begin{figure}[h]
		\begin{picture}(200,150)(-200,-10)
		\qbezier[5](-100,0)(0,0)(100,0)
		\qbezier[5](-100,40)(0,40)(100,40)
		\qbezier[5](-100,80)(0,80)(100,80)
		\qbezier[5](-100,120)(0,120)(100,120)
		\put(120,40){\circle*{3}}
		\put(125,35){\footnotesize{2}}
		\put(60,40){\circle*{3}}
		\put(65,35){\footnotesize{0}}
		\put(90,40){\circle*{3}}
		\put(95,35){\footnotesize{1}}
		\put(30,40){\circle*{3}}
		\put(35,35){\footnotesize{-1}}
		\put(0,40){\circle*{3}}
		\put(5,35){\footnotesize{-2}}
		\put(-30,40){\circle*{3}}
		\put(-25,35){\footnotesize{-3}}
		\put(-60,40){\circle*{3}}
		\put(-55,35){\footnotesize{-4}}
		\put(-90,40){\circle*{3}}
		\put(-85,35){\footnotesize{-5}}
		\put(95,95){\circle*{3}}
		\put(85,95){\circle*{3}}
		\put(65,95){\circle*{3}}
		\put(55,95){\circle*{3}}
		\put(35,95){\circle*{3}}
		\put(25,95){\circle*{3}}
		\put(5,95){\circle*{3}}
		\put(-5,95){\circle*{3}}
		\put(-25,95){\circle*{3}}
		\put(-35,95){\circle*{3}}
		\put(-55,95){\circle*{3}}
		\put(-65,95){\circle*{3}}
		\put(-85,95){\circle*{3}}
		\put(-93,95){\circle*{3}}
		\put(50,90){\vector(0,-1){95}}
		\put(50,90){\line(1,0){50}}
		\put(100,90){\line(0,1){10}}
		\put(100,100){\line(-1,0){50}}
		\put(50,100){\line(0,1){30}}
		\end{picture}
		\caption{the $s$-plane}
		\label{contour}
	\end{figure}
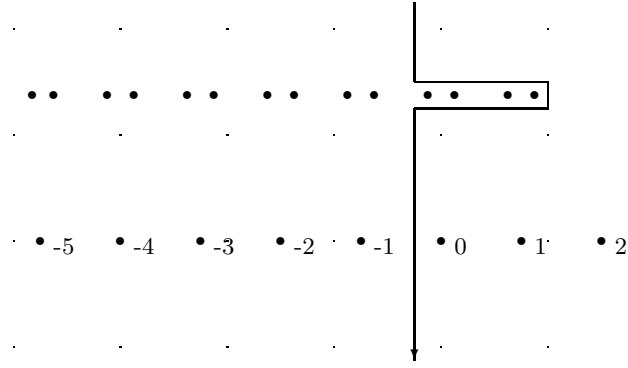
Consider the contour integral $\int_C F_l(s)ds$ along the path of figure \ref{contour}. According to lemma 3.3 in \cite{horja1999hypergeometric}, the integral is absolutely convergent (and defines an analytic function of $v$) if 
$$
\vert \Im(\log v)-(2l-1)\pi\vert<\pi.
$$
Moreover, the integral is equal to the sum of of the residues on the right of the contour for $ \Re (\log v)<-6 \log 3$, and to the opposite of the sum of the residues on the left of the contour for $ \Re (\log v)>-6 \log 3$.

Near the poles $s=k\in \mathbb Z$ we have
	$$
	\frac{\pi}{\sin (\pi s)}\cdot e^{-(2l-1)\pi i s}=\frac{1}{s-k}+O(1),
	$$
	therefore
	\begin{equation}
		\begin{split}
			\mathfrak H_{\GW}(v,z)&=\sum_{n\ge 0}zv^{\frac{p}{2\pi i}+n}\frac{\Gamma(3\frac{p}{2\pi i}+3n+1)^2}{\Gamma(\frac{p}{2\pi i}+n+1)^6}\\
			&=\sum_{n\ge 0} \Res_{s=n} F_l(s)ds\\
			&=\int_C F_l(s)ds
		\end{split}
	\end{equation}
	for $ \Re (\log v)<-6 \log 3$. Then the opposite of the sum of the residues on the left of the contour gives the analytic continuation of $\mathfrak H_{\GW}$ through the windows $w_l$.
	
	In order to compute the residues, we introduce  $\psi$, the logarithmic derivative of the gamma function. It is often called the digamma function, and defined by
	$$
	\psi(z)=\frac{d}{dz}\log(\Gamma(z))=\frac{\Gamma^\prime(z)}{\Gamma(z)}.
	$$
	Near the non-positive integer $-k$ we have the Laurent expansion
	\begin{equation}
		\Gamma(z)=\frac{(-1)^k}{k!}\left(\frac{1}{z+k}+\psi(k+1)\right)+O(z+k).
	\end{equation}
	Thus for a nagetive integer poles $s=n<0$,
	$$
	\Res_{s=n}F_l(s)ds=zv^{\frac{p}{2\pi i}+n}\frac{\Gamma(3\frac{p}{2\pi i}+3n+1)^2}{\Gamma(\frac{p}{2\pi i}+n+1)^6}=0
	$$
	since $$p^4\Big\vert \frac{\Gamma(3\frac{p}{2\pi i}+3n+1)^2}{\Gamma(\frac{p}{2\pi i}+n+1)^6}$$ and $p^4=0$ in $H_{\GW}$.

	The other poles of $F_l(s)ds$ are of the form $3\frac{p}{2\pi i}+3s+1=-d$ for $d\ge 0,d\equiv 0,1 \mod 3$. We calculate the residue at these poles. Near $s=-\frac{p}{2\pi i}-\frac{d}{3}-\frac{1}{3}$, set $s=\Delta s+\frac{p}{2\pi i}+\frac{d}{3}+\frac{1}{3}$, we have 

	\begin{equation}
		\begin{split}
			F_l(s)=&zv^{-\frac{d}{3}-\frac{1}{3}}\left(1+(\log v) \Delta s+O((\Delta s)^2)\right)\\
			&\cdot\frac{1}{(d!)^2} \left(\frac{1}{9}\frac{1}{(\Delta s)^2}+\frac{2\psi(d+1)}{3}\frac{1}{\Delta s} +O(1)\right)\\
			&\cdot \frac{1}{\Gamma(-\frac{d}{3}+\frac{2}{3})^6}\left(1-6\psi(-\frac{d}{3}+\frac{2}{3})\Delta s+O((\Delta s)^2)\right)\\
			&\cdot \frac{\pi}{\sin^2(\frac{p}{2\pi i}+\frac{d}{3}+\frac{1}{3})\pi}\left(-\sin(\frac{p}{2\pi i}+\frac{d}{3}+\frac{1}{3})\pi-\pi \cos(\frac{p}{2\pi i}+\frac{d}{3}+\frac{1}{3})\pi \Delta s+O((\Delta s)^2)\right)\\
			&\cdot e^{(2l-1)\pi i(\frac{p}{2\pi i}+\frac{d}{3}+\frac{1}{3})}\left(1-(2l-1)\pi i \Delta s +O((\Delta s)^2)\right).
		\end{split}
	\end{equation}
	Thus 
	\begin{equation}
		\begin{split}
			\Res_{s=-\frac{p}{2\pi i}-\frac{d}{3}-\frac{1}{3}}F_l=&zv^{-\frac{d}{3}-\frac{1}{3}}\frac{1}{(d!)^2}\frac{1}{\Gamma(-\frac{d}{3}+\frac{2}{3})^6}\frac{\pi}{\sin^2(\frac{p}{2\pi i}+\frac{d}{3}+\frac{1}{3})\pi} e^{(2l-1)\pi i(\frac{p}{2\pi i}+\frac{d}{3}+\frac{1}{3})}\\
			&\cdot\left[\sin(\frac{p}{2\pi i}+\frac{d}{3}+\frac{1}{3})\pi\cdot\left(\frac{2}{3}\psi(-\frac{d}{3}+\frac{2}{3})-\frac{2}{3}\psi(d+1)-\frac{1}{9}\log v+\frac{2l-1}{9}\pi i\right)\right.\\
			&\left.-\cos(\frac{p}{2\pi i}+\frac{d}{3}+\frac{1}{3})\pi\cdot \frac{\pi}{9}\right]\\
			=&zv^{-\frac{d}{3}-\frac{1}{3}}(2\pi i)(\frac{\sqrt 3}{2\pi})^6\frac{\Gamma(\frac{d}{3}+\frac{1}{3})^6}{\Gamma(d+1)^2}\frac{\left(e^{p+(\frac{d}{3}+\frac{1}{3})2\pi i}\right)^l}{e^{p+(\frac{d}{3}+\frac{1}{3})2\pi i}-1}\\
			&\cdot\frac{1}{3}\left(2\psi(\frac{d}{3}+\frac{1}{3})-2\psi(d+1)+2\psi(-\frac{d}{3}+\frac{2}{3})-2\psi(\frac{d}{3}+\frac{1}{3})-\frac{1}{3}\log v\right)\\
			&+zv^{-\frac{d}{3}-\frac{1}{3}}(2\pi i)^2(\frac{\sqrt 3}{2\pi})^6\frac{\Gamma(\frac{d}{3}+\frac{1}{3})^6}{\Gamma(d+1)^2}\frac{\left(e^{p+(\frac{d}{3}+\frac{1}{3})2\pi i}\right)^l}{e^{p+(\frac{d}{3}+\frac{1}{3})2\pi i}-1}\cdot\frac{l}{9}\\
			&-zv^{-\frac{d}{3}-\frac{1}{3}}(2\pi i)^2(\frac{\sqrt 3}{2\pi})^6\frac{\Gamma(\frac{d}{3}+\frac{1}{3})^6}{\Gamma(d+1)^2}\frac{\left(e^{p+(\frac{d}{3}+\frac{1}{3})2\pi i}\right)^{l+1}}{\left(e^{p+(\frac{d}{3}+\frac{1}{3})2\pi i}-1\right)^2}\cdot\frac{1}{9}.
		\end{split}
	\end{equation}
	We used
	$$
	\Gamma(-\frac{d}{3}+\frac{2}{3})^6\Gamma(\frac{d}{3}+\frac{1}{3})^6=\left(\frac{\pi}{\sin(\frac{d}{3}+\frac{1}{3})\pi}\right)^6=\left(\frac{2\pi }{\sqrt{3}}\right)^6
	$$
	and
	$$
	\sin(\frac{p}{2\pi i}+\frac{d}{3}+\frac{1}{3})\pi=\frac{e^{(\frac{p}{2\pi i}+\frac{d}{3}+\frac{1}{3})2\pi i}-1}{2ie^{(\frac{p}{2\pi i}+\frac{d}{3}+\frac{1}{3})\pi i}}.
	$$
 	Then we get the analytic continuation of $\mathfrak H_{\GW}$ through the windows $w_l$, which is
	\begin{equation}\label{afterac}
		\begin{split}
		&\sum_{\substack{d\ge 0\\d \not \equiv -1 \mod 3}}-\Res_{s=-\frac{p}{2\pi i}-\frac{d}{3}-\frac{1}{3}} F_l(s)\\
		=&\sum_{\substack{d\ge 0\\d \equiv 0 \mod 3}}zv^{-\frac{d}{3}-\frac{1}{3}}(2\pi i)(\frac{\sqrt 3}{2\pi})^6\frac{\Gamma(\frac{d}{3}+\frac{1}{3})^6}{\Gamma(d+1)^2}\frac{\left(e^{p+\frac{2\pi i}{3}}\right)^l}{1-e^{p+\frac{2\pi i}{3}}}\\
		&\cdot\frac{1}{3}\left(2\psi(\frac{d}{3}+\frac{1}{3})-2\psi(d+1)+2\psi(\frac{2}{3})-2\psi(\frac{1}{3})-\frac{1}{3}\log v\right)\\
		&+\sum_{\substack{d\ge 0\\d \equiv 0 \mod 3}}zv^{-\frac{d}{3}-\frac{1}{3}}(2\pi i)^2(\frac{\sqrt 3}{2\pi})^6\frac{\Gamma(\frac{d}{3}+\frac{1}{3})^6}{\Gamma(d+1)^2}\\
		&\cdot\left(\frac{\left(e^{p+\frac{2\pi i}{3}}\right)^l}{1-e^{p+\frac{2\pi i}{3}}}\cdot\frac{l}{9}
		+\frac{\left(e^{p+\frac{2\pi i}{3}}\right)^{l+1}}{\left(1-e^{p+\frac{2\pi i}{3}}\right)^2}\cdot\frac{1}{9}\right)\\
		&+\sum_{\substack{d\ge 0\\d \equiv 1 \mod 3}}zv^{-\frac{d}{3}-\frac{1}{3}}(2\pi i)(\frac{\sqrt 3}{2\pi})^6\frac{\Gamma(\frac{d}{3}+\frac{1}{3})^6}{\Gamma(d+1)^2}\frac{\left(e^{p+\frac{4\pi i}{3}}\right)^l}{1-e^{p+\frac{4\pi i}{3}}}\\
		&\cdot\frac{1}{3}\left(2\psi(\frac{d}{3}+\frac{1}{3})-2\psi(d+1)+2\psi(\frac{1}{3})-2\psi(\frac{2}{3})-\frac{1}{3}\log v\right)\\
		&+\sum_{\substack{d\ge 0\\d \equiv 1 \mod 3}}zv^{-\frac{d}{3}-\frac{1}{3}}(2\pi i)^2(\frac{\sqrt 3}{2\pi})^6\frac{\Gamma(\frac{d}{3}+\frac{1}{3})^6}{\Gamma(d+1)^2}\\
		&\cdot\left(\frac{\left(e^{p+\frac{4\pi i}{3}}\right)^l}{1-e^{p+\frac{4\pi i}{3}}}\cdot\frac{l}{9}
		+\frac{\left(e^{p+\frac{4\pi i}{3}}\right)^{l+1}}{\left(1-e^{p+\frac{4\pi i}{3}}\right)^2}\cdot\frac{1}{9}\right),
		\end{split}
	\end{equation}
	where we used
	$$
	\psi(-\frac{d}{3}+\frac{2}{3})-\psi(\frac{d}{3}+\frac{1}{3})=\frac{\pi\cos(\frac{d}{3}-\frac{1}{3})\pi}{\sin(\frac{d}{3}+\frac{1}{3})\pi}=\left\{
	\begin{array}{ll}
	\psi(\frac{2}{3})-\psi(\frac{1}{3})\quad d\equiv 0 \mod 3\\
		\psi(\frac{1}{3})-\psi(\frac{2}{3})\quad d\equiv 1 \mod 3.
	\end{array}
	\right.
	$$
	 On the other hand, we can expand $\mathfrak H_{\RW}$ with respect to $H^{(1)},H^{(2)}$ by differentiating (\ref{ohrw}):
	\begin{equation}\label{hrw}
	\begin{split}
	\mathfrak H_{\RW}&(u,z)=z\sum_{\substack{d\ge 0 \\ d\equiv 0 \mod 3}}u^{d+1}\cdot(2\pi i)^2\cdot(\frac{\sqrt 3}{2\pi})^6\frac{\Gamma(\frac{d}{3}+\frac{1}{3})^6}{\Gamma(d+1)^2}\mathds 1^{(1)}\\
	&+z\sum_{\substack{d\ge 0 \\ d\equiv 0 \mod 3}}u^{d+1}\cdot 2\pi i\cdot(\frac{\sqrt 3}{2\pi})^6\frac{\Gamma(\frac{d}{3}+\frac{1}{3})^6}{\Gamma(d+1)^2}\left(2\psi(\frac{d}{3}+\frac{1}{3})-2\psi(d+1)+2\psi(\frac{2}{3})-2\psi(\frac{1}{3})+\log u\right)H^{(1)}\\
	&+z\sum_{\substack{d\ge 0 \\ d\equiv 0 \mod 3}}u^{d+1}\cdot(2\pi i)^2\cdot(\frac{\sqrt 3}{2\pi})^6\frac{\Gamma(\frac{d}{3}+\frac{1}{3})^6}{\Gamma(d+1)^2}\mathds 1^{(2)}\\
	&+z\sum_{\substack{d\ge 0 \\ d\equiv 0 \mod 3}}u^{d+1}\cdot 2\pi i\cdot(\frac{\sqrt 3}{2\pi})^6\frac{\Gamma(\frac{d}{3}+\frac{1}{3})^6}{\Gamma(d+1)^2}\left(2\psi(\frac{d}{3}+\frac{1}{3})-2\psi(d+1)+2\psi(\frac{1}{3})-2\psi(\frac{2}{3})+\log u\right)H^{(2)}.
	\end{split}
	\end{equation}
	We complete the proof by comparing (\ref{afterac}) with (\ref{hrw}).
	\end{proof}

\section{Orlov functor for complete intersections}\label{secorl}
In this section, we introduce the categories of graded matrix factorizations, and describe a functor between the derived category of graded matrix factorizations and the derived category of $X_{3,3}$.
\subsection{Graded matrix factorizations}\label{secgmf}
\begin{dfn}
	A \textit{Landau--Ginzburg (LG) B-model} is the data of a stack $X$ with a $\mathbb C^*_R$-action, together with a regular function $F$ on $X$, where $-1\in \mathbb C^*_R$ acts trivially on $X$, and $F$ has $\mathbb C^*_R$-weight 2.
\end{dfn}
\begin{exa}
	As in the \S \ref{setup}, we consider a vector space $$V=\C^8=\operatorname{Spec}[x_1,\dots,x_6,p_1,p_2]$$ with a $\mathbb C^*$-action of weights $(1,1,1,1,1,1,-3,-3)$, then there are two different GIT quotients:
	$$
	X_+:=[(\C^6-\{0\})\times \C^2/\C^*]=\mathcal{O}_{\P^5}(-3)^{\oplus 2}
	$$
	and
	$$
	X_-:=[\C^6\times (\C^2-\{0\})/\C^*]=\mathcal{O}_{\P(3,3)}(-1)^{\oplus 6}.
	$$
	We define a $\CR$-action on $V$ to have weights $(0,0,0,0,0,0,2,2)$, then it induces the $\CR$-action on both $X_+$ and $X_-$. Let $W_1$ and $W_2$ be two homogeneous polynomials of degree 3 as in \S \ref{setup}, then the function $W:=p_1W_1+p_2W_2$ on $V$ is invariant under $\C^*$, thus we can regard $W$ as a function on $X_+$ and $X_-$. We get two LG B-models $(X_-,W)$ and $(X_+,W)$ in this way.
\end{exa}
\begin{dfn}
 	A \textit{graded matrix factorization} on a LG B-model $(X,F)$ is a finite rank vector bundle $E$, equivariant with respect to $\mathbb C^*_R$, equipped with an endomorphism $d_E$ of $\mathbb C^*_R$-degree 1 such that $d_E^2=F\cdot \Id_E$.
\end{dfn}
A dg-category $\mathcal{MF}^\CR(X,F)$ is constructed in \cite{segal2011equivalences,shipman2012geometric}, whose objects are graded matrix factorizations over $(X,F)$. We define $\DMF^\CR(X,F)$ to be the homotopy category of  $\mathcal{MF}^\CR(X,F)$, which is a trianglated category.
\begin{rmk}
	In \cite{shipman2012geometric} (or \cite{segal2011equivalences}), graded matrix factorizations are called D-branes (or B-branes). 
\end{rmk}
 Now we describe the structure of triangulated category over $\DMF^\CR(X,F)$. The shift functor on $\DMF^\CR(X,F)$ is given by 
 $$
 (E,d)[1]=(E\otimes \mathcal O[1],-d\otimes \Id)
 $$
 where $\mathcal O[1]$ is the trivial line bundle endowed with a $\CR$-action of weight 1 on fiber direction. Let $f\colon E_1\to E_2$ be a $\CR$-equivariant morphism that intertwines the differentials, then we define the cone 
 $$
 \operatorname{cone}(f\colon E_1\to E_2)=C_f:=\left(E_1[1]\oplus E_2 , \begin{pmatrix}
 d_1[1]&0\\f&d_2
 \end{pmatrix} \right).
 $$ 
 A distinguished triangle is a triangle isomorphic to one of the form
 $$
 E_1\xrightarrow{f} E_2\to C_f\to E_1[1]\to \dots
 $$ 
\begin{exa}[Two important graded matrix factorizations on $(X_\pm,W)$]\label{iptex}
 Before describing the graded matrix factorizations, we need to talk about line bundle over $X_\pm$. Consider a $\C^*$-action over $\C^6\times(\C^2-\{0\})\times \C$ with weights $(1,1,1,1,1,1,-3,-3,k)$, together with a $\CR$-action with weights $(0,0,0,0,0,0,2,2,l)$, then $[\C^6\times(\C^2-\{0\})\times \C/\C^*]$ is a $\CR$-equivariant line bundle over $X_-$. We denote this line bundle by $\mathcal O(k)[l]$. We can define $\mathcal O(k)[l]$ over $X_+$ in the same way. Given a matrix factorization $\mathcal M=(E,d)$ over $(X_\pm,W)$, we denote the matrix factorization $(E\otimes \mathcal O(k)[l],d\otimes \Id)$ by $\mathcal M(k)[l]$.
\par Now we define a distinguished matrix factorization $\mathcal K_-$ over $(X_\pm,W)$ to be the matrix factorization whose underlying $\CR$-equivariant vector bundle is
$$
\bigwedge^\bullet \mathcal O(1)[-1]^{\oplus 6}.
$$
In order to describe its differential, we take $f_1,\dots,f_6$ and $g_1,\dots,g_6$ to be homogeneous polynomials of degree 2 such that $$
W_1=x_1f_{11}+\dots+x_6f_{16}
$$ 
and 
$$
W_2=x_1f_{21}+\dots+x_6f_{26}.
$$
Then $s_x:=(x_1,\dots,x_6)$ is a section of $\mathcal O(1)[0]^{\oplus 6}$, and $s_{pf}:=(p_1f_{11}+p_2f_{21},\dots,p_1f_{16}+p_2f_{26})$ is a cosection of  $\mathcal O(1)[2]^{\oplus 6}$. We define the differential by
$$
d_-(-)=s_x\wedge (-)+ s_{pf} \vee (-). 
$$
Similarly we can define a graded matrix factorization $\mathcal K_+$ over $(X_\pm,W)$ to be the graded D-brane whose underlying $\CR$-equivariant vector bundle is
$$
\bigwedge^\bullet \mathcal O(-3)[1]^{\oplus 2}.
$$
and whose differential is defined by
$$
d_+(-)=s_p\wedge (-)+ s_W \vee (-),
$$
where $s_p:=(p_1,p_2)$ and $s_W:=(W_1,W_2)$.	Note that $\mathcal K_-=0$ in $\DMF^\CR(X_+,W)$ and $\mathcal K_+=0$ in $\DMF^\CR(X_-,W)$.
\end{exa}

Given a graded matrix factorization $(E,d)$, if $E$ can be written as direct sum of sub-bundles and $d$ can be written as sum of the zero extension of morphisms between those sub-bundles, then we can represent $(E,d)$ by a diagram whose vertices are the sub-bundles, and whose arrows are morphism between them. For example, we can represent $\mathcal K_+(q)[m]$ by the diagram
\[
\xymatrix{
	&\mathcal O(q)[m]\ar@<-.3ex>[rr]_-{s_p}&&\mathcal O(q-3)[m+1]^{\oplus 2}\ar@<-.3ex>[ll]_-{s_W} \ar@<-.3ex>[rr]_-{s_p}&&\mathcal O(q-6)[m+2]\ar@<-.3ex>[ll]_-{s_W} .
}
\]
\begin{rmk}\label{swsp}
	Let $A$ be a vector bundle over $X_\pm$, we define the graded matrix factorization $A\otimes \mathcal K_+(q)[m]$ to be $(A(q)[m]\otimes \bigwedge^\bullet \mathcal O(-3)[1]^{\oplus 2},\Id\otimes d_+)$, by an abuse of notation, it can be represented by
	\[
	\xymatrix{
		&A(q)[m]\ar@<-.3ex>[rr]_-{s_p}&&A(q-3)[m+1]^{\oplus 2}\ar@<-.3ex>[ll]_-{s_W} \ar@<-.3ex>[rr]_-{s_p}&&A(q-6)[m+2]\ar@<-.3ex>[ll]_-{s_W} .
	}
	\]
	If we do not require the $\CR$-weight to be 1, then $s_W$ and $s_p$ can be understood as morphisms
		\[
	\xymatrix{
		&A(6)[-3]\ar@<-.3ex>[rr]_-{s_p}&&A(3)[-2]^{\oplus 2}\ar@<-.3ex>[ll]_-{s_W} \ar@<-.3ex>[rr]_-{s_p}&&A\ar@<-.3ex>[ll]_-{s_W}.
	}
	\]
	We use this notation in the next subsection.
	\end{rmk}
\subsection{Orlov functor}\label{secorlovdef}
Orolv \cite{orlov2009derived} constructed a family of equivalences between a category of matrix factorization and derived category of a hypersurface in $\P^4$, we want to generalize it and get a family of equivalences between $\DMF^\CR(X_-,W)$ and $\Db(X_{3,3})$. We can do it by composing a family of functors given by Segal \cite{segal2011equivalences} with a functor given by Shipman \cite{shipman2012geometric}.

\begin{thm}[Segal \cite{segal2011equivalences}]\label{segal}
	There is a family of quasi-equivalences $\Phi_t\colon\mathcal{MF}^\CR(X_-,W)\xrightarrow{\sim}\mathcal{MF}^\CR(X_+,W)$ indexed by $t\in\mathbb Z$.
\end{thm}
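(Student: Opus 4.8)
The plan is to reproduce Segal's window (grade--restriction) construction. First I would form the ambient Landau--Ginzburg B-model $(\mathfrak{X}, W)$, where $\mathfrak{X} = [V/G]$ is the full quotient stack with $G = \mathbb{C}^*$ acting with weights $(1,1,1,1,1,1,-3,-3)$, carrying the induced $\CR$-action and the descended potential $W = p_1 W_1 + p_2 W_2$. The two GIT quotients sit inside $\mathfrak{X}$ as the open substacks obtained by deleting the unstable loci $Z_+ = \{x_1 = \dots = x_6 = 0\}$ and $Z_- = \{p_1 = p_2 = 0\}$, and the open immersions $X_\pm \hookrightarrow \mathfrak{X}$ induce, on the dg level, restriction functors $r_\pm \colon \mathcal{MF}^{\CR}(\mathfrak{X}, W) \to \mathcal{MF}^{\CR}(X_\pm, W)$. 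The key structural input is a description of the kernels: resolving the structure sheaves of $Z_\pm$ by Koszul-type matrix factorizations built from the sections $s_x$ and $s_p$ of Example \ref{iptex}, one shows, as in \cite{segal2011equivalences} and in the general variation-of-GIT formalism (Ballard--Favero--Katzarkov, Halpern-Leistner), that the kernel of $r_\pm$ is generated by matrix factorizations set-theoretically supported on $Z_\pm$, stratified by $G$-weight.

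Next, for each $t \in \mathbb{Z}$ I would define the window $\mathcal{W}_t \subset \mathcal{MF}^{\CR}(\mathfrak{X}, W)$ to be the full dg-subcategory of those matrix factorizations whose underlying $G$-equivariant bundle on $V$ has $G$-weights confined to $\{t, t+1, \dots, t+5\}$. The width $6$ is the magnitude of the sum of the $G$-weights of the destabilizing one-parameter subgroup along the normal directions to $Z_\pm$; by the Calabi--Yau condition $\sum_i w_i = rd$ this equals $6$ simultaneously at both walls, which is why a single window serves both restrictions. One then verifies the two properties that make the window work: (i) $r_\pm|_{\mathcal{W}_t}$ is quasi-fully faithful, because in comparing $\operatorname{Hom}$-complexes the discrepancy is a sum of contributions supported on $Z_\pm$, and the $G$-weight bound of width $6$ leaves no room for such a contribution to be nonzero (a local computation, decomposing into weight pieces for the destabilizing subgroup); (ii) $r_\pm|_{\mathcal{W}_t}$ is essentially surjective, because any object of $\DMF^{\CR}(X_\pm, W)$ admits an equivariant extension to $\mathfrak{X}$, and one may then tensor repeatedly with the relevant Koszul matrix factorization to shear the out-of-range weights back into the window -- these modifications being invisible after restriction precisely because $\mathcal{K}_- = 0$ in $\DMF^{\CR}(X_+, W)$ and $\mathcal{K}_+ = 0$ in $\DMF^{\CR}(X_-, W)$ (Example \ref{iptex}). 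Hence both $r_+|_{\mathcal{W}_t}$ and $r_-|_{\mathcal{W}_t}$ are quasi-equivalences, and I would set $\Phi_t = (r_+|_{\mathcal{W}_t}) \circ (r_-|_{\mathcal{W}_t})^{-1}$; shifting $t$ shifts the window and yields a genuinely different functor, giving the $\mathbb{Z}$-indexed family.

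The hard part will be items (i) and (ii) in the complete-intersection setting $r = 2$, rather than the hypersurface case $r = 1$ that is the principal example in \cite{segal2011equivalences}: since $Z_- = \{p_1 = p_2 = 0\}$ now has codimension $2$, its Koszul matrix factorization has three terms instead of two, so the weight bookkeeping -- both in the vanishing needed for quasi-full-faithfulness and in the termination of the lifting procedure -- becomes more involved, and one must check that the width-$6$ window forced by the Calabi--Yau condition still simultaneously annihilates the obstructions coming from both $Z_+$ and $Z_-$. I expect this bookkeeping to be the only real obstacle: the dg-categorical framework, the restriction functors, and the abstract mechanism of the window are routine transcriptions of \cite{segal2011equivalences,shipman2012geometric}, and for the explicit data $N = 6$, $d = 3$, $r = 2$ the required vanishing statements can in any case be confirmed by direct computation.
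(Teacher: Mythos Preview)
The paper does not prove this theorem: it is stated as a citation of Segal's result, and the paper only supplies an operational description of $\Phi_t$ immediately afterwards (find an isomorphic representative whose underlying bundle is a sum of $\mathcal O(k)[l]$ with $t\le k\le t+5$, then reinterpret the same data over $X_+$). Your proposal is a correct and more detailed outline of Segal's window argument, and it is fully consistent with the paper's description of the construction; in particular your window $\{t,\dots,t+5\}$ and the role of $\mathcal K_\pm$ in shearing weights into the window match exactly what the paper uses in \S\ref{secorlovdef} and in Corollary~\ref{iptrmk}.

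One small point worth flagging: you anticipate that the $r=2$ case requires extra bookkeeping beyond \cite{segal2011equivalences}. In fact Segal's argument is already stated for the general affine toric VGIT situation and applies verbatim here, so no new work is needed; the paper accordingly treats the theorem as a direct citation rather than as something to be reproved or extended.
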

When passing to homotopy category, we get a family of equivalences of trianglated category:
 $$\Phi_t\colon\DMF^\CR(X_-,W)\xrightarrow{\sim}\DMF^\CR(X_+,W).$$

\begin{thm}[Shipman \cite{shipman2012geometric}]\label{shipman}
	Let $p:X_+=\mathcal{O}_{\P^5}(-3)^{\oplus 2}\to \P^5$ be the bundle projection, and $i:\mathcal{O}_{X_{3,3}}(-3)^{\oplus 2}\to \mathcal{O}_{\P^5}(-3)^{\oplus 2}$ be the inclusion of total space. Then the functor $i_*\circ p^*\colon \Db(X_{3,3})\to\DMF^\CR(X_+,W)$ is an equivalence of trianglated category.
\end{thm}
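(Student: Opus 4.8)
The plan is to show that the functor $\Psi:=i_*\circ p^*$ is an equivalence in three stages: check that $\Psi$ is well defined and identify its values on generators of $\Db(X_{3,3})$; prove full faithfulness by reducing to those generators; and prove essential surjectivity by a generation (``window'') argument in $\DMF^\CR(X_+,W)$. I expect the last stage to be the main obstacle.

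First I would verify well-definedness and link $\Psi$ to Example \ref{iptex}. The total space $E:=\Ocal_{X_{3,3}}(-3)^{\oplus2}$ is nothing but the restriction $\pi^{-1}(X_{3,3})$ of the bundle $\pi\colon X_+\to\P^5$, so $i\colon E\hookrightarrow X_+$ is a regular closed immersion of codimension $2$, cut out by the pullbacks $s_W=(W_1,W_2)$, and $W=p_1W_1+p_2W_2$ vanishes identically on $E$ because $W_1,W_2$ vanish on $X_{3,3}$. Hence for perfect $\mathcal F\in\Db(X_{3,3})$ the object $\Psi(\mathcal F)=i_*p^*\mathcal F$ is a perfect $\Ocal_{X_+}$-complex annihilated by $W$; folding its Koszul resolution together with the Koszul differential $s_p\wedge(-)$ of the tautological section $s_p=(p_1,p_2)$ turns it into a graded matrix factorization of $W$, the mixed term of the squared differential being exactly $\langle s_W,s_p\rangle\cdot\Id=W\cdot\Id$. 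Taking $\mathcal F=\Ocal_{X_{3,3}}$ recovers $\mathcal K_+$ of Example \ref{iptex} up to a $\CR$-twist, and in general $\Psi(\Ocal_{X_{3,3}}(k))\cong\mathcal K_+(k)[\,\cdot\,]$. Since $X_{3,3}$ is a smooth complete intersection in $\P^5$, the line bundles $\{\Ocal_{X_{3,3}}(k)\}_{k\in\mathbb Z}$ split-generate $\Db(X_{3,3})$ (restrict Beilinson's resolution of the diagonal), so it suffices to control $\Psi$ on these.

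For full faithfulness I would compute $R\mathrm{Hom}_{\DMF^\CR(X_+,W)}\!\big(\mathcal K_+(a),\mathcal K_+(b)[\,\cdot\,]\big)$ from the folded Koszul resolutions, using the projection formula and $R\pi_*\Ocal_{X_+}=\bigoplus_{k\ge0}\mathrm{Sym}^k\!\big(\Ocal_{\P^5}(3)^{\oplus2}\big)$; the $\CR$-grading isolates exactly the graded pieces that recombine, via the Koszul resolution of $\Ocal_{X_{3,3}}$ over $\P^5$, into $R\mathrm{Hom}_{X_{3,3}}(\Ocal(a),\Ocal(b))$. Equivalently, one can phrase the comparison as a graded curved Koszul duality: since $(W_1,W_2)$ is a regular section, $\Db(X_{3,3})$ is the derived category of the Koszul dg-algebra $\big(\bigwedge^\bullet\Ocal_{\P^5}(-3)^{\oplus2},\iota_{s_W}\big)$ on $\P^5$, whose Koszul dual is the category of curved modules over $\big(\mathrm{Sym}^\bullet\Ocal_{\P^5}(3)^{\oplus2},\ \text{curvature }s_W\big)=(X_+,W)$; the induced functor is $(-)\otimes^{\mathbb L}\mathcal K_+$ and full faithfulness follows from the standard Koszul-duality comparison once the $\CR$-weights are matched with the Koszul bigrading. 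A third, hands-on route is to iterate the line-bundle case of \cite{shipman2012geometric}: writing $W=p_2W_2+\pi^*(p_1W_1)$ presents $X_+$ as a line bundle over $\mathrm{Tot}_{\P^5}\Ocal(-3)$, a relative Kn\"orrer-periodicity reduction identifies $\DMF^\CR(X_+,W)$ with $\DMF^\CR\big(\mathrm{Tot}_{\{W_2=0\}}\Ocal(-3),\,p_1W_1\big)$, and one more application of the hypersurface case (with base the cubic fourfold $\{W_2=0\}\subset\P^5$) lands in $\Db(X_{3,3})$; one then checks the composite is $i_*\circ p^*$.

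The hard part is essential surjectivity, equivalently that $\{\mathcal K_+(k)[m]\}$ split-generates $\DMF^\CR(X_+,W)$. The delicate feature is that $E=\pi^{-1}(X_{3,3})$ is a complete intersection in $X_+$, so $i_*\Ocal_E$ is a perfect $\Ocal_{X_+}$-complex and would vanish in the \emph{ungraded} singularity category of $W^{-1}(0)$; it is only the $\CR$-equivariant structure that keeps these objects nonzero and makes them plentiful enough to generate. By Orlov \cite{orlov2009derived}, $\DMF^\CR(X_+,W)$ is the $\CR$-graded singularity category of the hypersurface $W^{-1}(0)\subset X_+$, which is generated by the images of the twisted structure sheaves $\Ocal_{W^{-1}(0)}(k)[m]$. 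I would then run the window argument of Orlov--Segal--Shipman: using the Koszul complexes of the fibre coordinates $(p_1,p_2)$ and of $(W_1,W_2)$, and discarding perfect (hence null-homotopic) summands such as $\{1,W\}$, each such generator is rewritten as an iterated extension of twists $\mathcal K_+(k')[m']$ confined to a fixed finite window; combined with the full faithfulness already proved, this forces $\Psi$ to be essentially surjective, hence an equivalence. Making the window bookkeeping precise in the $\CR$-equivariant category and tracking the shifts $[\,\cdot\,]$ consistently is the technical heart of the proof, and is exactly where the machinery of Shipman \cite{shipman2012geometric} and Isik \cite{isik2013equivalence} enters.
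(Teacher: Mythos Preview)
The paper does not prove this theorem at all: it is quoted as a result of Shipman \cite{shipman2012geometric} (with Isik \cite{isik2013equivalence} mentioned as an alternative construction), and is used as a black box to define the Orlov functor $\Orl_t=(i_*\circ p^*)^{-1}\circ\Phi_t$. The only related statement the paper records is Proposition~\ref{trick}, also cited from \cite{shipman2012geometric}, identifying $i_*p^*(\mathcal O(k)[l])$ with $\mathcal K_+(k)[l]$. So there is no ``paper's own proof'' to compare your proposal against.

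That said, your sketch is a reasonable outline of how the result is actually obtained in the literature. Your third route---iterating the line-bundle/hypersurface case via relative Kn\"orrer periodicity---is essentially Shipman's strategy, and your Koszul-duality reformulation is close to Isik's approach. Two small points worth tightening if you pursue this: (i) the claim that $\{\mathcal O_{X_{3,3}}(k)\}_{k\in\mathbb Z}$ split-generate $\Db(X_{3,3})$ requires more than ``restricting Beilinson's resolution of the diagonal''; one typically uses that $X_{3,3}\hookrightarrow\P^5$ is a complete intersection so each $\mathcal O_{X_{3,3}}(k)$ has a finite Koszul resolution by $\mathcal O_{\P^5}(j)$'s and then invokes Beilinson on $\P^5$; (ii) in the essential-surjectivity step, the generation of $\DMF^\CR(X_+,W)$ by the images of $\mathcal O_{W^{-1}(0)}(k)[m]$ is not quite Orlov's original statement for affine graded rings---you need the global/stacky version (as in Shipman or Ballard--Favero--Katzarkov), and the bookkeeping of the $\CR$-weights versus the $G$-twists is exactly where care is required.
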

Let $\Orl_t$ be the composition of $(i_*\circ p^*)^{-1}\circ\Phi_t$, Then we obtain a family of equivalence 
$$\Orl_t\colon\DMF^\CR(X_-,W) \xrightarrow{\sim}\Db(X_{3,3}).$$ 
\subsubsection*{Description of Shipman's functor}
Shipman's functor $i_*\circ p^*$ can be characterized by the following proposition:
\begin{prop}[\cite{shipman2012geometric}]\label{trick}
	The image of $\mathcal O(k)[l]\in \Db(X_{3,3})$ under the functor $i_*\circ p^*$ in Theorem \ref{shipman} is $\mathcal K_+(k)[l]$. 
\end{prop}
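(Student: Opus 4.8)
The plan is to compute $i_*\circ p^*$ directly on the line bundles $\mathcal O(k)[l]$, realizing the answer as a folded Koszul resolution. The functor $i_*\circ p^*$ is an equivalence (Theorem \ref{shipman}), hence triangulated, and $\Db(X_{3,3})$ is generated by the line bundles $\{\mathcal O(k)[l]\}$, so it is determined by its values on these; thus the computation both proves the proposition and pins down the functor. Moreover, by the projection formula $i_*p^*\big(\mathcal F\otimes\mathcal O(k)[l]\big)\cong\big(i_*p^*\mathcal F\big)\otimes\mathcal O(k)[l]$, while $\mathcal K_+(k)[l]=\mathcal K_+\otimes\mathcal O(k)[l]$ by the definition in Example \ref{iptex}; since tensoring by $\mathcal O(k)[l]$ is exact on both categories, it is enough to treat $\mathcal F=\mathcal O_{X_{3,3}}$, i.e.\ to show $i_*p^*\mathcal O_{X_{3,3}}\cong\mathcal K_+$ in $\DMF^\CR(X_+,W)$.

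First I would identify the underlying $\CR$-equivariant coherent sheaf on $X_+$. Writing $X_+=\mathrm{Tot}\,\mathcal O_{\P^5}(-3)^{\oplus2}$ with base coordinates $x_1,\dots,x_6$ and fibre coordinates $p_1,p_2$, the restricted total space $\mathrm{Tot}\,\mathcal O_{X_{3,3}}(-3)^{\oplus2}$ is the preimage of $X_{3,3}$ under the projection $p\colon X_+\to\P^5$, hence the closed subscheme of $X_+$ cut out by the pullbacks of $W_1,W_2$; these are sections of $\mathcal O_{X_+}(3)$ and form a regular sequence, since by the nondegeneracy hypothesis of \S\ref{input} their common zero locus is a complete intersection of the expected codimension $2$ in the smooth stack $X_+$. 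Therefore $i_*p^*\mathcal O_{X_{3,3}}$ is the quotient sheaf $\mathcal O_{X_+}/(W_1,W_2)$, which is annihilated by $W=p_1W_1+p_2W_2$. By the construction of the equivalence in Theorem \ref{shipman} (cf.\ Shipman \cite{shipman2012geometric}, and Isik \cite{isik2013equivalence}, Orlov \cite{orlov2009derived} for the analogous statements), the image in $\DMF^\CR(X_+,W)$ of a coherent sheaf supported on $\mathrm{crit}(W)$ is obtained by choosing a finite $\CR$-equivariant locally free resolution of it over $X_+$ together with a contracting homotopy for multiplication by $W$, and folding the two into a matrix factorization; the result is independent of the choices up to isomorphism.

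Then I would exhibit this data. The Koszul complex
\begin{equation*}
0\longrightarrow\mathcal O_{X_+}(-6)\xrightarrow{\ \binom{-W_2}{\ W_1}\ }\mathcal O_{X_+}(-3)^{\oplus2}\xrightarrow{\ (W_1\ \ W_2)\ }\mathcal O_{X_+}\longrightarrow 0,
\end{equation*}
endowed with the $\CR$-linearization that makes its differential equivariant of the appropriate weight (these are exactly the $\CR$-shifts recorded in Remark \ref{swsp}), resolves $\mathcal O_{X_+}/(W_1,W_2)$; its differential is contraction with $s_W=(W_1,W_2)$ on $\bigwedge^\bullet\mathcal O_{X_+}(-3)^{\oplus2}$. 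Because $W=\sum_j p_jW_j=\langle s_p,s_W\rangle$ with $s_p=(p_1,p_2)$, wedging with $s_p$ provides a $\CR$-equivariant contracting homotopy $h$: the Koszul (Cartan-type) identity $s_p\wedge\!\big(s_W\vee(-)\big)+s_W\vee\!\big(s_p\wedge(-)\big)=\langle s_p,s_W\rangle\cdot(-)$ gives $dh+hd=W\cdot\Id$, while $h^2=s_p\wedge s_p\wedge(-)=0$ and $d^2=0$. Folding the resolution along $h$ produces the matrix factorization whose underlying $\CR$-equivariant bundle is $\bigwedge^\bullet\mathcal O(-3)[1]^{\oplus2}$ and whose differential is $d+h=s_W\vee(-)+s_p\wedge(-)=d_+$ — that is, $\mathcal K_+$ of Example \ref{iptex}. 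Twisting by $\mathcal O(k)[l]$ then yields the proposition.

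The Koszul bookkeeping is routine; the step that deserves care is the second one, namely the verification that Shipman's functor $i_*\circ p^*$, applied to a coherent sheaf supported on $\mathrm{crit}(W)$, genuinely is the resolve-and-fold construction above, with the $\CR$-linearizations matching on the nose. This is what Shipman's construction of the equivalence does on such sheaves; alternatively, one can check it on the generators $\mathcal O(k)[l]$ by comparison with the explicit form of Orlov's functor in the hypersurface case \cite{orlov2009derived,chiodo2014landau}, the complete-intersection case going through in the same way because $X_{3,3}\subset\P^5$ is again cut out by a regular sequence and the $W$-homotopy is produced tautologically by the relation $W=\sum_j p_jW_j$.
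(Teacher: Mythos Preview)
Your approach is correct and is exactly the ``trick'' the paper has in mind: the proposition is quoted from \cite{shipman2012geometric} without proof, but Remark~\ref{rmkkj} says ``using the same trick in the proof of proposition~\ref{trick}'' to produce $\mathcal K_-$ from a Koszul resolution, so the intended argument is precisely your resolve-and-fold computation. The reduction to $k=l=0$ via the projection formula, the identification of $i_*p^*\mathcal O_{X_{3,3}}$ with $\mathcal O_{X_+}/(W_1,W_2)$, the Koszul resolution by $\bigwedge^\bullet\mathcal O(-3)^{\oplus2}$ with differential $s_W\vee$, and the nullhomotopy $s_p\wedge$ for multiplication by $W=\langle s_p,s_W\rangle$ are all as they should be.

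One small terminological slip: the sheaf $\mathcal O_{X_+}/(W_1,W_2)$ is supported on $p^{-1}(X_{3,3})=\mathrm{Tot}\,\mathcal O_{X_{3,3}}(-3)^{\oplus2}$, which lies in $W^{-1}(0)$ but is strictly larger than $\mathrm{crit}(W)\cap X_+=X_{3,3}$ (the zero section over the complete intersection). What your argument actually uses is that $W$ annihilates the sheaf, i.e.\ that it is supported in $W^{-1}(0)$; this is the correct hypothesis for the fold construction to yield a matrix factorization, and it holds because $W_1=W_2=0$ on $X_{3,3}$. Replace ``supported on $\mathrm{crit}(W)$'' by ``annihilated by $W$'' (or ``supported in $W^{-1}(0)$'') and the write-up is clean.
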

\begin{rmk}\label{rmkkj}
	Using the same trick in the proof of proposition \ref{trick}, we can show $\mathcal K_-(q)[m]$ is the image of $\mathcal O_{\mathbb P_{3,3}}(-q-6)[m-6]$ under the pushforward functor $j_*\colon \Db(\mathbb P_{3,3})\to \DMF^\CR(X_-,W)$.
\end{rmk}
\subsubsection*{Description of Segal's functor}
Segal's functor $\Phi_t$ is constructed in 2 steps:
\begin{enumerate}
	\item 
	Given a graded matrix factorization $(E,d)$ over $(X_-,W)$, we find another graded matrix factorization $(E',d')$ which is isomorphic to $(E,d)$ in $\DMF^\CR(X_-,W)$, and $E'$ is a direct sum of $\mathcal O(k)[l]$ for $t\le k \le t+5$.
	\item 
	Since $\mathcal O(k)[l]$ also stand for line bundles over $X_+$, we take $\Phi_t\left((E,d)\right)$ to be the graded matrix factorization over $(X_+,W)$ with the same shape of direct sum and endomorphism as $(\mathcal E', d')$.
\end{enumerate}
The interval $[t, t+5]$ is called a \textit{window}. In order to apply Segal's functor, we need to find $(E',d')$ in step 1 which lies in the window. We explain the strategy in the next subsection.
\subsection{Strategy to go through the window}
Let $(E,d)$ be a graded matrix factorization, we want to modify it to make it into the window $[t, t+5]$. If $E$ has a direct summand $\mathcal O(k)^{\oplus m}$ which is not in the window, assume $k<t$. Since over $X_-$ we have a resolution
\[
\xymatrix{
	0 \ar[r]&\mathcal O(k+6)^{\oplus m}\ar[r]^{s_p}&\mathcal O(k+3)^{\oplus 2m} \ar[r]^{s_p}&\mathcal O(k)^{\oplus m},
}
\]
we want to replace $\mathcal O(k)^{\oplus m}$ by $\mathcal O(k+6)^{\oplus m}\oplus\mathcal O(k+3)^{\oplus 2m}$. If we can do it repeatedly, we can kill all direct summands outside the window, and finally get a graded matrix factorization in the window. The following lemma and proposition show when and how can we replace a direct summand.

\begin{dfn}\label{defrpl}
 Let $\mathcal M$ be a graded matrix factorization over $(X_\pm,W)$, and $A$ be a direct summand of the underlying line bundle of $\mathcal M$. We say $A$ is \textit{replaceable} in $\mathcal M$ if $\mathcal M$ can be represented by the diagram
 	\begin{equation*}
 \xymatrix{
 	& A   \ar@<-.3ex>[rrr]_{d_{BA}+p_1\delta_{BA}^1+p_2\delta_{BA}^2}   &&&B   \ar@<-.3ex>[lll]_{p_1\delta_{AB}^1+p_2\delta_{AB}^2} \ar@<-.3ex>[rrr]_{d_{CB}+p_1\delta_{CB}^1+p_2\delta_{CB}^2}  & &&C   \ar@<-.3ex>[lll]_{d_{BC}+p_1\delta_{BC}^1+p_2\delta_{BC}^2}\ar@{>}@(ur,dr)^{d_{CC}+p_1\delta_{CC}^1+p_2\delta_{CC}^2},  }
 \end{equation*}
 such that
 \begin{enumerate}
 	\item if we write $A$, $B$ and $C$ as direct sum of $\mathcal O(k)[l]$, then all morphisms $d$ and $\delta$ with some indexes can be represented by matrices with entries in $\mathbb C[x_1,\dots,x_6]$;
 	\item the following equations hold
 	$$
 	\delta_{AB}^2\delta_{BA}^1=\delta_{AB}^1\delta_{BA}^2=0.
 	$$
 \end{enumerate}
\end{dfn}

\begin{lem}\label{iptlm}
	If $A$ is replaceable in $\mathcal M$ as in definition \ref{defrpl}, Then with the notation $s_W$ and $s_p$ in remark \ref{swsp}, the diagram 
	\begin{equation*}
	 \xymatrix{
		A(3)[-2]^{\otimes 2}\ar@<-.3ex>[dd]_{-(d_{BA}+p_1\delta_{BA}^1+p_2\delta_{BA}^2)\circ s_p}\ar@<-.3ex>[rrr]_{-s_W}&&&A(6)[-3]\ar@<-.3ex>[lll]_{-s_p}
		\\\\  B   \ar@<-.3ex>[uu]_{-(\delta_{AB}^1,\delta_{AB}^2)} \ar@<-.3ex>[rrr]_{d_{CB}+p_1\delta_{CB}^1+p_2\delta_{CB}^2}  & &&C   \ar@<-.3ex>[lll]_{d_{BC}+p_1\delta_{BC}^1+p_2\delta_{BC}^2}\ar@{>}@(ur,dr)^{d_{CC}+p_1\delta_{CC}^1+p_2\delta_{CC}^2}\ar[uu]_{\delta_{AB}^1\delta_{BC}^2}  }
\end{equation*}
	represents a graded matrix factorization over $(X_\pm,W)$. We denote the new graded matrix factorization by $\mathcal M \backslash A$.
\end{lem}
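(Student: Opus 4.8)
The plan is a direct block-matrix verification: I will check that the endomorphism $d'$ displayed in the lemma is $\CR$-equivariant of $\CR$-degree $1$ and satisfies $(d')^2=W\cdot\Id$, which by definition makes $(\mathcal M\backslash A,d')$ a graded matrix factorization over $(X_\pm,W)$. Since none of the ingredients below uses the GIT stability condition, this simultaneously handles $X_+$ and $X_-$.

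First I would unpack the hypotheses. Writing $d_{\mathcal M}$ as a block matrix on $A\oplus B\oplus C$ with entries abbreviated $\mathfrak d_{XY}=d_{XY}+p_1\delta_{XY}^1+p_2\delta_{XY}^2$, the matrix-factorization identity $d_{\mathcal M}^2=W\cdot\Id$ reads as the diagonal equations $\mathfrak d_{AB}\mathfrak d_{BA}=W$, $\mathfrak d_{BA}\mathfrak d_{AB}+\mathfrak d_{BC}\mathfrak d_{CB}=W$, $\mathfrak d_{CB}\mathfrak d_{BC}+\mathfrak d_{CC}^{2}=W$, together with the vanishings $\mathfrak d_{AB}\mathfrak d_{BC}=\mathfrak d_{BC}\mathfrak d_{CC}=\mathfrak d_{CC}\mathfrak d_{CB}=\mathfrak d_{CB}\mathfrak d_{BA}=0$. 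By condition (1) of Definition~\ref{defrpl} all of $d_{XY},\delta_{XY}^i$ have entries in $\mathbb C[x_1,\dots,x_6]$, whereas $p_1,p_2$ are independent variables; comparing coefficients of $1,p_1,p_2,p_1^2,p_1p_2,p_2^2$ therefore refines each block equation into a finite list of identities among the $d_{XY},\delta_{XY}^i$ (for instance $\delta_{AB}^1 d_{BA}=W_1$, $\delta_{AB}^2 d_{BA}=W_2$, $\delta_{AB}^1\delta_{BA}^1=\delta_{AB}^2\delta_{BA}^2=0$, $\delta_{AB}^1\delta_{BC}^2+\delta_{AB}^2\delta_{BC}^1=0$, etc.). To this list I adjoin the two replaceability identities $\delta_{AB}^2\delta_{BA}^1=\delta_{AB}^1\delta_{BA}^2=0$ of condition (2).

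Next I would write $d'$ as a $4\times4$ block matrix on $A(3)[-2]^{\oplus2}\oplus A(6)[-3]\oplus B\oplus C$, reading the arrows off the lemma's diagram and interpreting $s_p,s_W$ as in Remark~\ref{swsp}. The one external input is that $\mathcal K_+$ is a matrix factorization of $W$ (Example~\ref{iptex}): restricting $(s_p+s_W)^2=W\cdot\Id$ on $\bigwedge^\bullet\mathcal O(-3)[1]^{\oplus2}$ to its two top exterior powers yields $s_W s_p=W\cdot\Id$ on $A(6)[-3]$ and $s_p s_W=W\cdot\Id-\Theta$ on $A(3)[-2]^{\oplus2}$, where $\Theta$ is the ``leak through $\bigwedge^0$'', i.e. the operator with matrix $(p_iW_j)_{j,i}$; moreover $s_p$ restricted to the $p_i$-slot of $A(3)[-2]^{\oplus2}$ is multiplication by $p_i$, and the two consecutive $s_p$'s compose to zero. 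Then I would compute all sixteen blocks of $(d')^2$. The $A(6)[-3]$-diagonal block is $s_W s_p=W$ immediately; the $B$- and $C$-diagonal blocks are exactly the second and third refined equations; and the $A(3)[-2]^{\oplus2}$-diagonal block is $s_p s_W$ plus the contribution of the triangle through $B$, which works out to the operator with matrix $(\delta_{AB}^i\mathfrak d_{BA}\cdot p_j)_{i,j}$ — and here the replaceability identities collapse $\delta_{AB}^i\mathfrak d_{BA}$ to $W_i$, so this contribution is precisely $\Theta$ and $s_ps_W+\Theta=W$. Among the off-diagonal blocks, many vanish because the relevant arrows are absent or because $s_p\circ s_p=0$; the $C\to A(6)[-3]$ block is where the correction arrow $\delta_{AB}^1\delta_{BC}^2$ is needed: by the refined form of $\mathfrak d_{AB}\mathfrak d_{BC}=0$ the path through $B$ contributes $(-p_2\delta_{AB}^1\delta_{BC}^2,\ p_1\delta_{AB}^1\delta_{BC}^2)$, which is exactly cancelled by $-s_p\circ(\delta_{AB}^1\delta_{BC}^2)$; the remaining mixed blocks ($B\leftrightarrow A(6)[-3]$, $A(3)[-2]^{\oplus2}\leftrightarrow C$, etc.) are dispatched the same way, combining the refined vanishing relations with the replaceability identities (the latter reappear, e.g., to kill terms like $\delta_{AB}^1\delta_{BC}^2\delta_{CB}^i$ using $\delta_{AB}^1\delta_{BA}^2=0$). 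Finally the $\CR$-degree-$1$ and equivariance properties of $d'$ are bookkeeping of the twists $[-2],[-3]$, which are chosen (Remark~\ref{swsp}) precisely so that $s_p,s_W$ — hence every new arrow — has $\CR$-degree $1$.

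I expect the main obstacle to be organizational rather than conceptual: one must track signs throughout the diagram and keep straight which of the two summands of $A(3)[-2]^{\oplus2}$ each arrow meets, and one must correctly identify the leak term $\Theta$ of the Koszul pair $(s_p,s_W)$ so that it matches the $B$-triangle contribution exactly. The only genuinely non-formal points are that identification and the fact that $\mathfrak d_{AB}\mathfrak d_{BA}=W$ by itself is insufficient for the $A(3)[-2]^{\oplus2}$-block and for some mixed blocks — one really needs the separate vanishings $\delta_{AB}^1\delta_{BA}^2=\delta_{AB}^2\delta_{BA}^1=0$, which is exactly the reason condition (2) of Definition~\ref{defrpl} is built into the notion of replaceability. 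Everything else is routine linear algebra over $\mathbb C[x_1,\dots,x_6,p_1,p_2]$.
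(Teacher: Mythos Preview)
Your approach is correct and is essentially the same as the paper's: both set up $d'$ as a block endomorphism, record the identities coming from $d_{\mathcal M}^2=W$ by comparing $p_1,p_2$-coefficients, and then verify $(d')^2=W\cdot\Id$ block by block, with the replaceability identities $\delta_{AB}^1\delta_{BA}^2=\delta_{AB}^2\delta_{BA}^1=0$ entering exactly at the $A(3)[-2]^{\oplus 2}$-diagonal and in the mixed blocks involving the correction arrow. One small slip: the cancellation you describe---the path through $B$ giving $(-p_2\delta_{AB}^1\delta_{BC}^2,\ p_1\delta_{AB}^1\delta_{BC}^2)$ cancelled by $-s_p\circ(\delta_{AB}^1\delta_{BC}^2)$---is the $C\to A(3)[-2]^{\oplus 2}$ block, not the $C\to A(6)[-3]$ block (the latter is $\delta_{AB}^1\delta_{BC}^2\,\mathfrak d_{CC}$, which vanishes by a separate argument).
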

\begin{proof}
	It is easy to check the morphisms in the diagram have $\CR$-weight 1. We need to prove the square of sum of them equals to $W\cdot\Id$, \textit {i.e.}
	\begin{enumerate}
		\item For each vertex, the sum of arrows going out composed with their reverse equals $W\cdot \Id$. Because $\mathcal M$ and $A\otimes K_+(6)[-3]$ are graded matrix factorization, it is true at the vertices $A(6)[-3]$ and $C$.
		Note that
		$$
		(p_1\delta_{AB}^1+p_2\delta_{AB}^2)(d_{BA}+p_1\delta_{BA}^1+p_2\delta_{BA}^2)=p_1W_1+p_2W_2,
		$$
		since the sections $p_1,p_2,x_1,\dots,x_6$ are algebraic independent, we deduce
		$$
		(\delta_{AB}^1,\delta_{AB}^2)\circ(d_{BA}+p_1\delta_{BA}^1+p_2\delta_{BA}^2)=s_W.
		$$
		It follows that the property holds at $A(3)[-2]$.
		At $B$, it follows from
		$$
		s_p\circ(\delta_{AB}^1,\delta_{AB}^2)=p_1\delta_{AB}^1+p_2\delta_{AB}^2.
		$$
		\item
		By compositing 2 successive arrows, we get morphisms from one vertex to another. If we fix a pair of different source and target, the sum of those morphisms should be zero. The morphism from $A(6)[-3]$ to $B$ is zero because $s_p\circ s_p=0$ in $A\otimes \mathcal K_+(6)[-3]$. Similarly the morphism froms $A(3)[-2]$ to $C$, from $B$ to $C$, and from $C$ to $B$ are zero. Since $\mathcal M$ is a graded matrix factorization, we have 
		$$
		(d_{BC}+p_1\delta_{BC}^1+p_2\delta_{BC}^2)(d_{CB}+p_1\delta_{CB}^1+p_2\delta_{CB}^2)+(d_{BA}+p_1\delta_{BA}^1+p_2\delta_{BA}^2)(p_1\delta_{AB}^1+p_2\delta_{AB}^2)=p_1W_1+p_2W_2
		$$
		and
		$$
		(p_1\delta_{AB}^1+p_2\delta_{AB}^2)(d_{BC}+p_1\delta_{BC}^1+p_2\delta_{BC}^2)=0,
		$$
		hence
		\[
		\begin{split}
		s_W\circ (\delta_{AB}^1,\delta_{AB}^2)&=-W_2\delta_{AB}^1+W_1\delta_{AB}^2\\
		&=-\delta_{AB}^2d_{BA}\delta_{AB}^1+\delta_{AB}^1d_{BA}\delta_{AB}^2\\
		&=-\delta_{AB}^2(W_1-\delta_{BC}^1d_{CB}-d_{BC}\delta_{CB}^1)+\delta_{AB}^1((W_2-\delta_{BC}^2d_{CB}-d_{BC}\delta_{CB}^2)\\
		&=W_2\delta_{AB}^1-W_1\delta_{AB}^2-2\delta_{AB}^1\delta_{BC}^2d_{CB}
		\end{split}
		\]
		so we get
		\[
		\begin{split}
		&s_W\circ (\delta_{AB}^1,\delta_{AB}^2)+\delta_{AB}^1\delta_{BC}^2(d_{CB}+p_1\delta_{CB}^1+p_2\delta_{CB}^2)\\
		=&(-W_2\delta_{AB}^1+W_1\delta_{AB}^2+\delta_{AB}^1\delta_{BC}^2d_{CB})-p_1\delta_{AB}^2\delta_{BC}^1\delta_{CB}^1+p_2\delta_{AB}^1\delta_{BC}^2\delta_{CB}^2\\
		=&p_1\delta_{AB}^2\delta_{BA}^1\delta_{AB}^1-p_2\delta_{AB}^1\delta_{BA}^2\delta_{AB}^2=0,\\
		\end{split}
		\]
		this proves the sum of morphisms from $B$ to $A(6)[-3]$ is zero.
		We also have
		\[
		\begin{split}
		&(\delta_{AB}^1,\delta_{AB}^2)\circ(d_{BC}+p_1\delta_{BC}^1+p_2\delta_{BC}^2)+s_p\delta_{AB}^1\delta_{BC}^2\\
		=&(p_1\delta_{AB}^1\delta_{BC}^1+p_2\delta_{AB}^1\delta_{BC}^2-p_2\delta_{AB}^1\delta_{BC}^2,p_1\delta_{AB}^2\delta_{BC}^1+p_2\delta_{AB}^2\delta_{BC}^2+p_1\delta_{AB}^1\delta_{BC}^2)\\
		=&(0,0),
		\end{split}
		\]
		this proves the sum morphisms from $C$ to $A(3)[-2]$ is zero. Finally, since
		$$
		(d_{BC}+p_1\delta_{BC}^1+p_2\delta_{BC}^2)(d_{CC}+p_1\delta_{CC}^1+p_2\delta_{CC}^2)=0,
		$$
		we have 
		\[
		\begin{split}
		&\delta_{AB}^1\delta_{BC}^2(d_{CC}+p_1\delta_{CC}^1+p_2\delta_{CC}^2)\\
		=&\delta_{AB}^1\delta_{BC}^2d_{CC}-p_1\delta_{AB}^2\delta_{BC}^1\delta_{CC}^1+p_2\delta_{AB}^1\delta_{BC}^2\delta_{CC}^2\\
		=&-\delta_{AB}^1d_{BC}\delta_{CC}^2=0,
		\end{split}
		\] 
		this proves the morphism from $C$ to $A(6)[-3]$ is zero.
	\end{enumerate}
\end{proof}

\begin{prop}\label{iptpp}
	If $A$ is replaceable in $\mathcal M$, then there exists a morphism of graded matrix factorization 
$$
f\colon \mathcal M \to A\otimes \mathcal K_+ (6)[-2].
$$
Moreover, the cone $C_{f}$ is isomorphic to $\mathcal (M\backslash A)[1]$ in $\DMF^\CR(X_\pm,W)$.
\end{prop}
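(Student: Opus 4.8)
The plan is to write $f$ down by hand from the structure maps of Definition \ref{defrpl}, to check it is a morphism of graded matrix factorizations by recycling the identities already proved in Lemma \ref{iptlm}, and then to recognize $C_f$ as $(\mathcal M\setminus A)[1]$ after peeling off a contractible summand. Using Remark \ref{swsp}, represent $A\otimes\mathcal K_+(6)[-2]$ by the Koszul diagram with terms $A(6)[-2]$, $A(3)[-1]^{\oplus 2}$, $A[0]$, forward arrows $s_p$ and backward arrows $s_W$. Define $f\colon\mathcal M\to A\otimes\mathcal K_+(6)[-2]$ on the decomposition of Definition \ref{defrpl} by: the identity on the summand $A$ (landing in $A[0]$), the map $-(\delta_{AB}^1,\delta_{AB}^2)$ on $B$ (landing in $A(3)[-1]^{\oplus 2}$), and $\delta_{AB}^1\delta_{BC}^2$ on $C$ (landing in $A(6)[-2]$), the signs fixed so as to match the diagram of Lemma \ref{iptlm}. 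Then $f$ has $\CR$-degree $0$ and polynomial entries in $x_1,\dots,x_6$, and the condition that $f$ intertwine the two differentials unwinds into exactly the identities checked in the proof of Lemma \ref{iptlm} --- the factorization $(\delta_{AB}^1,\delta_{AB}^2)\circ(d_{BA}+p_1\delta_{BA}^1+p_2\delta_{BA}^2)=s_W$, the relation $s_p\circ(\delta_{AB}^1,\delta_{AB}^2)=p_1\delta_{AB}^1+p_2\delta_{AB}^2$, the vanishings $s_p\circ s_p=0$ and $\delta_{AB}^1\delta_{BA}^2=\delta_{AB}^2\delta_{BA}^1=0$, and their companions involving $\delta_{BC}$, $\delta_{CB}$, $\delta_{CC}$.

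Next, form the cone. By the definition of $C_f$ in \S\ref{secgmf}, its underlying $\CR$-equivariant bundle is $\mathcal M[1]\oplus(A\otimes\mathcal K_+(6)[-2])$, that is
\[
A[1]\oplus B[1]\oplus C[1]\oplus A(6)[-2]\oplus A(3)[-1]^{\oplus 2}\oplus A[0],
\]
which is the underlying bundle of $(\mathcal M\setminus A)[1]$, namely $B[1]\oplus C[1]\oplus A(6)[-2]\oplus A(3)[-1]^{\oplus 2}$, together with the extra pair $A[1]\oplus A[0]$. In the differential of $C_f$ the component $A[1]\to A[0]$ is $f$ restricted to the $A$-summand, i.e. the identity, which is invertible; so the standard change of basis eliminating an invertible component of the differential (Gaussian elimination) splits $C_f$ as the direct sum of a contractible matrix factorization on $A[1]\oplus A[0]$ and a residual factorization $\mathcal N$ on $B[1]\oplus C[1]\oplus A(6)[-2]\oplus A(3)[-1]^{\oplus 2}$. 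The only arrow of $C_f$ out of $A[1]$ into the other summands is $-(d_{BA}+p_1\delta_{BA}^1+p_2\delta_{BA}^2)\colon A[1]\to B[1]$, and the only arrow into $A[0]$ is $s_p\colon A(3)[-1]^{\oplus 2}\to A[0]$; hence the unique correction term in $\mathcal N$ is $-(d_{BA}+p_1\delta_{BA}^1+p_2\delta_{BA}^2)\circ s_p$ on the arrow $A(3)[-1]^{\oplus 2}\to B[1]$. Collecting this with the maps inherited unchanged --- $d_{CB}+p_i\delta_{CB}^i$, $d_{BC}+p_i\delta_{BC}^i$, $d_{CC}+p_i\delta_{CC}^i$ on the $B,C$ block, $\pm s_p$ and $\pm s_W$ between the two Koszul terms, and $-(\delta_{AB}^1,\delta_{AB}^2)$, $\delta_{AB}^1\delta_{BC}^2$ coming from $f|_B$ and $f|_C$ --- one reads off precisely the diagram defining $\mathcal M\setminus A$ in Lemma \ref{iptlm}, shifted by $[1]$. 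Since a contractible matrix factorization is $0$ in $\DMF^\CR(X_\pm,W)$, it follows that $C_f\cong\mathcal N\cong(\mathcal M\setminus A)[1]$.

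The real work is in this last identification. The delicate points are: (i) keeping track of the signs and $\CR$-shifts produced by the $[1]$-shift and by the Gaussian elimination, so that the corrected differential of $\mathcal N$ reproduces the --- at first sight somewhat artificial --- cross-terms of Lemma \ref{iptlm} on the nose; and (ii) checking that the vanishing relations $\delta_{AB}^1\delta_{BA}^2=\delta_{AB}^2\delta_{BA}^1=0$ of Definition \ref{defrpl} are exactly what makes the cancellation close up, i.e. no further correction terms appear and $\mathcal N$ is again a matrix factorization. Granting this bookkeeping, the proposition is proved.
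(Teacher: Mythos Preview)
Your approach is correct and is essentially the same as the paper's. You construct $f$ by the same block formula --- the paper writes it as the matrix $\begin{pmatrix}\Id_A&0&0\\0&(\delta^1_{AB},\delta^2_{AB})&0\\0&0&-\delta^1_{AB}\delta^2_{BC}\end{pmatrix}$ (your signs differ, but as you note this is harmless bookkeeping) --- and you reduce $C_f$ to $(\mathcal M\setminus A)[1]$ by cancelling the invertible component $\Id_A\colon A[1]\to A$. The paper carries out exactly this cancellation, but explicitly: it writes down maps $F\colon C_f\to(\mathcal M\setminus A)[1]$ and $G$ in the reverse direction as concrete $4\times 6$ and $6\times 4$ matrices, checks $F\circ G=\Id$, and exhibits a homotopy $H$ with $G\circ F=\Id+Hd_{C_f}+d_{C_f}H$. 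What you package as ``Gaussian elimination'' is precisely this $F$, $G$, $H$; the single correction term you identify, $(d_{BA}+p_1\delta^1_{BA}+p_2\delta^2_{BA})\circ s_p$ on $A(3)[-1]^{\oplus 2}\to B[1]$, is exactly the one extra arrow that appears in the paper's diagram for $(\mathcal M\setminus A)[1]$ but not in $C_f$ restricted to the four surviving summands. One small caution: your phrase ``splits $C_f$ as a direct sum of a contractible matrix factorization on $A[1]\oplus A[0]$ and $\mathcal N$'' is a little loose, since the restriction of $d_{C_f}$ to $A[1]\oplus A$ does not by itself square to $W$; the splitting is only after the change of basis (equivalently, up to homotopy), which is precisely what the explicit $F$, $G$, $H$ encode.
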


\begin{proof}
	The morphism between the underlying vector bundles $A\oplus B\oplus C$ and $A\oplus A(3)[-1]^{\oplus 2}\oplus A(6)[-2]$ is given by the matrix $\begin{pmatrix}
	&\Id_A &0&0\\
	&0& (\delta^1_{AB},\delta^2_{AB})&0\\
	&0&0&-\delta^1_{AB} \delta^2_{BC}
	\end{pmatrix}$. We can check it is indeed a morphism of graded matrix factorization by the method used in the proof of lemma \ref{iptlm}. The cone $C_f$ of $f$ is given by
	\begin{equation*}
	\xymatrix{
	&A\ar@<-.3ex>[rrr]_-{ s_W}&&	&A(3)[-1]^{\oplus 2}\ar@<-.3ex>[rrr]_-{s_W} \ar@<-.3ex>[lll]_-{ s_p} &&&A(6)[-2]\ar@<-.3ex>[lll]_-{s_p}\\\\
			&A[1]\ar[uu]_{\Id_A}\ar@<-.3ex>[rrr]_{-(d_{BA}+p_1\delta_{BA}^1+p_2\delta_{BA}^2)}&& &B[1] \ar@<-.3ex>[lll]_{-(p_1\delta_{AB}^1+p_2\delta_{AB}^2)} \ar@<-.3ex>[rrr]_{-(d_{CB}+p_1\delta_{CB}^1+p_2\delta_{CB}^2)}\ar[uu]_{(\delta^1_{AB},\delta^2_{AB})}&&&C[1]\ar@<-.3ex>[lll]_{-(d_{BC}+p_1\delta_{BC}^1+p_2\delta_{BC}^2)}\ar@{>}@(ur,dr)^{-(d_{CC}+p_1\delta_{CC}^1+p_2\delta_{CC}^2)}  \ar[uu]_{-\delta^1_{AB} \delta^2_{BC}}
	}
	\end{equation*}
	We know that $(\mathcal M\backslash A)[1]$ is given by
	\begin{equation*}
	\xymatrix{
	A(3)[-1]^{\otimes 2}\ar@<-.3ex>[dd]_{(d_{BA}+p_1\delta_{BA}^1+p_2\delta_{BA}^2)\circ s_p}\ar@<-.3ex>[rrr]_{s_W}&&&A(6)[-2]\ar@<-.3ex>[lll]_{s_p}
	\\\\  B[1]   \ar@<-.3ex>[uu]_{(\delta_{AB}^1,\delta_{AB}^2)} \ar@<-.3ex>[rrr]_{-(d_{CB}+p_1\delta_{CB}^1+p_2\delta_{CB}^2)}  & &&C[1]   \ar@<-.3ex>[lll]_{-(d_{BC}+p_1\delta_{BC}^1+p_2\delta_{BC}^2)}\ar@{>}@(ur,dr)^{-(d_{CC}+p_1\delta_{CC}^1+p_2\delta_{CC}^2)}\ar[uu]_{-\delta_{AB}^1\delta_{BC}^2}  }
	\end{equation*}
	We write the two underlying vector bundles as direct sums  $$A(6)[-2]\oplus A(3)[-1]^{\oplus 2}\oplus C[1]\oplus B[1]\oplus A\oplus A[1]$$
	and
	$$A(6)[-2]\oplus A(3)[-1]^{\oplus 2}\oplus C[1]\oplus B[1].$$
	In this order, we define a morphism of graded matrix factorization 
	$$F\colon C_f\to (\mathcal M\backslash A)[1]$$
	by the matrix
	$$
	\begin{pmatrix}
	\Id&0&0&0&0&0\\ 0&\Id&0&0&0&0\\ 0&0&\Id&0&0&0\\0&0&0&\Id&d_{BA}+p_1\delta_{BA}^1+p_2\delta_{BA}^2&0
	\end{pmatrix}
	$$ and define
	$$G\colon (\mathcal M\backslash A)[1]\to C_f$$
	by
	$$
	\begin{pmatrix}
	\Id&0&0&0\\0&\Id&0&0\\0&0&\Id&0\\0&0&0&\Id \\0&0&0&0\\0&-s_p &0&0
	\end{pmatrix},
	$$
	we have 
	$$
	F\circ G= \Id_{(\mathcal M\backslash A)[1]}
	$$
	and 
	$$
	G\circ F=\Id_{C_f}+\begin{pmatrix}
	0&0&0&0&0&0\\0&0&0&0&0&0\\0&0&0&0&0&0\\0&0&0&0&d_{BA}+p_1\delta_{BA}^1+p_2\delta_{BA}^2&0\\0&0&0&0&-\Id&0\\0&-s_p&0&0&0&-\Id
	\end{pmatrix}.
	$$
	Define
	$$
	H\colon C_f\to C_f
	$$
	to be
	$$
	\begin{pmatrix}
	0&0&0&0&0&0\\0&0&0&0&0&0\\0&0&0&0&0&0\\0&0&0&0&0&0\\0&0&0&0&0&0\\0&0&0&0&-\Id&0
	\end{pmatrix},
	$$
	then 
	$$
	G\circ F=\Id_{C_f}+H\circ d_{C_f}+d_{C_f}\circ H,
	$$
	which means $G\circ F$ is homotopy to $\Id_{C_f}$, hence we get
	$
	C_f=(\mathcal M\backslash A)[1]
	$ in $\DMF^\CR(X_\pm,W)$.
\end{proof}

\begin{cor} \label{iptrmk}
In $\DMF^\CR(X_-,W)$, we have $\mathcal M=\mathcal M\backslash A$. In this way we replace $A$ by $A(6)[-3]\oplus A(3)[-2]$ as claimed in the beginning of this subsection.
\end{cor}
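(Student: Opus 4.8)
The plan is to deduce the corollary formally from Proposition \ref{iptpp} together with the vanishing $\mathcal K_+=0$ in $\DMF^\CR(X_-,W)$ recorded in Example \ref{iptex}, working entirely inside the triangulated category $\DMF^\CR(X_-,W)$.

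First I would observe that the middle term of the relevant triangle is a zero object. Since $A$ is a vector bundle of finite rank, $-\otimes A$ is an additive endofunctor of $\DMF^\CR(X_-,W)$, and tensoring by the $\CR$-equivariant line bundle $\mathcal O(6)[-2]$ is an autoequivalence (with inverse $-\otimes\mathcal O(-6)[2]$); both send zero objects to zero objects, so $A\otimes\mathcal K_+(6)[-2]\cong 0$ in $\DMF^\CR(X_-,W)$. Now I invoke Proposition \ref{iptpp}: there is a morphism of graded matrix factorizations $f\colon\mathcal M\to A\otimes\mathcal K_+(6)[-2]$ sitting in a distinguished triangle
$$\mathcal M\xrightarrow{\ f\ }A\otimes\mathcal K_+(6)[-2]\longrightarrow C_f\longrightarrow\mathcal M[1],$$
with $C_f\cong(\mathcal M\backslash A)[1]$ in $\DMF^\CR(X_-,W)$. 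Because the second term is a zero object, $f$ is forced to be the zero morphism and $C_f$ is the cone of a morphism into $0$, so the connecting map $C_f\to\mathcal M[1]$ is an isomorphism; hence $(\mathcal M\backslash A)[1]\cong\mathcal M[1]$, and applying the shift $[-1]$ (an equivalence) yields $\mathcal M\backslash A\cong\mathcal M$ in $\DMF^\CR(X_-,W)$. Finally, reading off the diagram defining $\mathcal M\backslash A$ in Lemma \ref{iptlm}, its underlying $\CR$-equivariant bundle is $A(6)[-3]\oplus A(3)[-2]^{\oplus 2}\oplus B\oplus C$, whereas $\mathcal M$ has underlying bundle $A\oplus B\oplus C$; thus passing from $\mathcal M$ to $\mathcal M\backslash A$ replaces the summand $A$ by $A(6)[-3]\oplus A(3)[-2]^{\oplus 2}$, in accordance with the resolution $0\to\mathcal O(k+6)^{\oplus m}\xrightarrow{s_p}\mathcal O(k+3)^{\oplus 2m}\xrightarrow{s_p}\mathcal O(k)^{\oplus m}$ displayed at the beginning of the subsection.

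Everything here is formal triangulated-category bookkeeping once Proposition \ref{iptpp} is available; the only step needing a little care — the point I would flag as the main (mild) obstacle — is the rigorous justification of $A\otimes\mathcal K_+(6)[-2]\cong 0$. Concretely one should trace through the construction of $\mathcal{MF}^\CR(X_-,W)$ in \cite{segal2011equivalences,shipman2012geometric} and check that the explicit contracting homotopy of $\mathcal K_+$ over $X_-$ — available because $s_p=(p_1,p_2)$ is nowhere vanishing on $\C^6\times(\C^2-\{0\})$, so the differential $s_p\wedge(-)+s_W\vee(-)$ is null-homotopic there — survives tensoring by $A(6)[-2]$. With that in place the corollary is immediate.
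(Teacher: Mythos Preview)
Your argument is correct and follows exactly the paper's approach: use $\mathcal K_+=0$ in $\DMF^\CR(X_-,W)$ from Example \ref{iptex}, so the target of $f$ vanishes and Proposition \ref{iptpp} gives $\mathcal M\cong C_f[-1]\cong\mathcal M\backslash A$. Your final caveat is overcautious: once $\mathcal K_+\cong 0$ in the homotopy category, $A\otimes\mathcal K_+(6)[-2]\cong 0$ is immediate from additivity of $A\otimes(-)(6)[-2]$, with no need to revisit the contracting homotopy.
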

\begin{proof}
In $\DMF^\CR(X_-,W)$, we have 
	 $$\mathcal K_+=0,$$
	 so 
		 $$
	 \mathcal M=\text{cone}\left(f\colon \mathcal M \to A\otimes \mathcal K_+ (6)[-2] \right)[-1]=\mathcal M\backslash A.
	 $$
\end{proof}

\section{Comparison between Orlov functor and analytic continuation}\label{seccom}
In this section we compute the image of $\mathcal K_-(q)[m]$ under the Orlov functors using the strategy from the previous section. Then we show that the Orlov functors coincide with the linear maps gotten from analytic continuation in the sense that
$$
\ch\left(\Orl_{t-3}(\mathcal K_-(q)[m])\otimes \mathcal O(-3)\right)=\mathbb U_t\left( \ch(\mathcal K_-(q)[m]) \right).
$$

\subsection{Image of $\mathcal K_-(q)[m]$ under Orlov functor}
We compute $\ch\left(\Orl_t(\mathcal K_-(q)[m])\right)$ in this subsection. 
\begin{prop}\label{ppsft}
	For any object $\mathcal F$ in $\DMF^\CR(X_\pm,W)$, and any integer $q,m,t$, we have
	$$
	\Orl_{t}(\mathcal F(q)[m])=\Orl_{t-q}(\mathcal F)(q)[m].
	$$
\end{prop}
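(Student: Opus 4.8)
The plan is to reduce the claim to the corresponding compatibility for Segal's functor $\Phi_t$, and then to read that compatibility off from the two–step construction of $\Phi_t$ recalled in \S\ref{secorlovdef}.

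First I would note that Shipman's equivalence $i_*\circ p^*\colon \Db(X_{3,3})\to\DMF^\CR(X_+,W)$ intertwines the autoequivalence $-\otimes\mathcal O(q)[m]$ on the two sides. This is built into its construction: $i_*\circ p^*$ is assembled from a pullback and a pushforward along maps that are equivariant for the $G\times\CR$–action, hence it commutes with twisting by the equivariant line bundle $\mathcal O(q)[m]$ by the projection formula. (As a consistency check, on the generators $\mathcal O(k)[l]$ of $\Db(X_{3,3})$ Proposition \ref{trick} gives $i_*p^*\big(\mathcal O(k)[l]\otimes\mathcal O(q)[m]\big)=\mathcal K_+(k+q)[l+m]=\mathcal K_+(k)[l]\otimes\mathcal O(q)[m]=\big(i_*p^*\mathcal O(k)[l]\big)\otimes\mathcal O(q)[m]$.) Consequently the inverse functor $(i_*p^*)^{-1}$ also commutes with $-\otimes\mathcal O(q)[m]$, and since $\Orl_t=(i_*\circ p^*)^{-1}\circ\Phi_t$, it suffices to prove
\begin{equation*}
\Phi_t\big(\mathcal F(q)[m]\big)\cong\Phi_{t-q}(\mathcal F)(q)[m]\qquad\text{in }\DMF^\CR(X_+,W).
\end{equation*}

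Next I would unwind Segal's construction. To compute $\Phi_{t-q}(\mathcal F)$ one replaces $\mathcal F$ by an isomorphic (in $\DMF^\CR(X_-,W)$) graded matrix factorization $\mathcal F'$ whose underlying bundle is a direct sum of line bundles $\mathcal O(k)[l]$ with $t-q\le k\le t-q+5$ and whose differential, in this decomposition, has entries polynomial in $x_1,\dots,x_6,p_1,p_2$; then $\Phi_{t-q}(\mathcal F)$ is the graded matrix factorization over $(X_+,W)$ obtained by keeping exactly this shape of direct sum and this endomorphism, reinterpreting each $\mathcal O(k)[l]$ as a line bundle on $X_+$. Write $\Psi$ for this reinterpretation operation, so $\Phi_{t-q}(\mathcal F)=\Psi(\mathcal F')$. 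Now $\mathcal F'(q)[m]$ is isomorphic to $\mathcal F(q)[m]$ in $\DMF^\CR(X_-,W)$ because $-\otimes\mathcal O(q)[m]$ is an equivalence; its underlying bundle is the direct sum of the $\mathcal O(k+q)[l+m]$ with $t\le k+q\le t+5$, hence lies in the window $[t,t+5]$ of $\Phi_t$; and tensoring by $\mathcal O(q)[m]$ leaves the matrix of the differential unchanged, so its entries are still polynomial in $x,p$. Thus $\mathcal F'(q)[m]$ is a legitimate window representative for $\mathcal F(q)[m]$, whence $\Phi_t\big(\mathcal F(q)[m]\big)=\Psi\big(\mathcal F'(q)[m]\big)$.

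Finally, the reinterpretation $\Psi$ visibly commutes with $-\otimes\mathcal O(q)[m]$: on both $X_-$ and $X_+$ the bundle $\mathcal O(q)[m]$ is the one attached to the same pair of $(G,\CR)$–weights, and $\Psi$ merely transports the (unchanged) combinatorial description, so $\Psi(\mathcal N(q)[m])=\Psi(\mathcal N)(q)[m]$ for any $\mathcal N$ of the relevant shape. Hence $\Phi_t\big(\mathcal F(q)[m]\big)=\Psi\big(\mathcal F'(q)[m]\big)=\Psi(\mathcal F')(q)[m]=\Phi_{t-q}(\mathcal F)(q)[m]$, and combined with the first paragraph this gives $\Orl_t\big(\mathcal F(q)[m]\big)=\Orl_{t-q}(\mathcal F)(q)[m]$. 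The only point that requires genuine care is that $\Phi_t$ is well defined on the homotopy category independently of the chosen window representative; here I would simply invoke Segal's Theorem \ref{segal}, which guarantees precisely this, so that the value $\Psi\big(\mathcal F'(q)[m]\big)$ computed above is indeed $\Phi_t\big(\mathcal F(q)[m]\big)$. I do not expect any obstacle beyond this bookkeeping.
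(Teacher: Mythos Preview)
Your argument is correct and is essentially the same as the paper's: both reduce to the observation that if $\mathcal F'$ is a window representative for $\mathcal F$ in the window $[t-q,t-q+5]$ then $\mathcal F'(q)[m]$ is a window representative for $\mathcal F(q)[m]$ in the window $[t,t+5]$, giving $\Phi_t(\mathcal F(q)[m])=\Phi_{t-q}(\mathcal F)(q)[m]$, and then pass to $\Orl$ using that $(i_*p^*)^{-1}$ commutes with $-\otimes\mathcal O(q)[m]$. The only cosmetic difference is that the paper checks this last compatibility by invoking Proposition~\ref{trick} on the generators $\mathcal O(k)[l]$ of $\Db(X_{3,3})$, whereas you appeal to the projection formula (and use Proposition~\ref{trick} only as a sanity check).
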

\begin{proof}
	If we find a graded matrix factorization $\mathcal E$ such that $\mathcal E=\mathcal F$ in $\DMF^\CR(X_\pm,W)$, and the underlying vector bundle of $\mathcal E$ is a direct sum of $\mathcal O(i)[j]$ for $t\le i \le t+5$, then $\mathcal E(q)[m]=\mathcal F(q)[m]$ in  $\DMF^\CR(X_\pm,W)$, and the underlying vector bundle of $\mathcal E(q)[m]$ is a direct sum of $\mathcal O(i)[j]$ for $t+q\le i \le t+q+5$. By the construction of Segal's functor $\Phi_i$, we have 
	$$
	\Phi_t(\mathcal F)(q)[m]=\Phi_{t+q}(\mathcal F(q)[m]).
	$$
	By compositing with $(i_*\circ p^*)^{-1}$ and using proposition \ref{trick} (note that $\mathcal O(i)[j]$ generate $\Db(X_{3,3})$), we get
	$$
	\Orl_t(\mathcal F)(q)[m]=\Orl_{t+q}(\mathcal F(q)[m]).
	$$
\end{proof}

So we only need to compute $\Orl_t(\mathcal K_-)$. By example \ref{iptex} we can represent $\mathcal K_-$ by
\begin{equation*}
\xymatrix{
\mathcal O \ar@<-.3ex>[r]_-{s_x}
	&\mathcal O(1)[-1]^{\oplus 6}\ar@<-.3ex>[l]_-{s_{pf}}\ar@<-.3ex>[r]_-{s_x}&C\ar@<-.3ex>[l]_-{s_{pf}} \ar@{>}@(ur,dr)^{s_x+s_{pf}}.
}
\end{equation*}
where 
$$
C=\bigwedge^{\ge 2}\mathcal O(1)[-1].
$$

We compute $\Orl_1(\mathcal K_-)$ first. The direct summand $\mathcal O$ of the underlying vector bundle of $\mathcal K_-$ is not in the window $1\le k \le 6$. Fortunately we can decompose $s_{pf}$ as
$$
s_{pf}=p_1s_{f_1}+p_2s_{f_2},
$$
so $\mathcal O$ is replaceable in $\mathcal K_-$. Then by corollary \ref{iptrmk}, in $\DMF^\CR(X_-,W)$, we have a isomorphism between $\mathcal K_-$ and $\mathcal K_-^{(1)}:=\mathcal K_-\backslash \mathcal O$, where $\mathcal K_-^{(1)}$ can be represented by 
\begin{equation*}
\xymatrix{
	\mathcal O(3)[-2]^{\otimes 2}\ar@<-.3ex>[dd]_{-s_x\circ s_p}\ar@<-.3ex>[rrr]_{-s_W}&&&\mathcal O(6)[-3]\ar@<-.3ex>[lll]_{-s_p}
	\\\\  \mathcal O(1)[-1] ^{\oplus 6} \ar@<-.3ex>[uu]_{-(s_{f_1},s_{f_2})} \ar@<-.3ex>[rrr]_{s_x}  & &&C  \ar@<-.3ex>[lll]_{s_{pf}}\ar@{>}@(ur,dr)^{s_x+s_{pf}}\ar[uu]_{0}  }
\end{equation*}
Note that the underlying vector bundle of $\mathcal K_-^{(1)}$ is a direct sum of $\mathcal O(k)[l]$ for $1\le k \le 6$, which are all in the window, thus in $\DMF^\CR(X_+,W)$ we have
$$
\Phi_1(\mathcal K_-)=\mathcal K_-^{(1)}.
$$
By proposition \ref{iptpp} we know in $\DMF^\CR(X_\pm,W)$
$$
\mathcal K_-^{(1)}=\operatorname{cone}( \mathcal K_- \to \mathcal K_+(6)[-2])[-1].
$$
In $\DMF^\CR(X_+,W)$ we have $\mathcal K_-=0$ hence
$$
\mathcal K_-^{(1)}= \mathcal K_+(6)[-3]=i_*\circ p^* (\mathcal O(6)[-3]),
$$
so we have 
$$
\Orl_1(\mathcal K_-)=\mathcal O(6)[-3].
$$	
Next we compute $\Orl_2(\mathcal K_-)$. The window becomes $2\le k \le 7$. The direct summand $\mathcal O(1)[-1]^{\oplus 6}$ of the underlying vector bundle of $\mathcal K_-^{(1)}$ is out the window. Again we can check that $\mathcal O(1)[-1]^{\oplus 6}$ is replaceable in $\mathcal O(1)[-1]^{\oplus 6}$. Let 
$$
\mathcal K_-^{(2)}:= \mathcal K_-^{(1)}\backslash \mathcal O(1)[-1]^{\oplus 6},
$$
then in $\DMF^\CR(X_-,W)$ we have
$$
\mathcal K_-^{(2)}=\mathcal K_-^{(1)}=\mathcal K_-
$$
and all the direct summands of the underlying vector bundle of $\mathcal K_-^{(2)}$ are in the window. Thus in $\DMF^\CR(X_+,W)$ we have 
$$
\Phi_2(\mathcal K_-)=K_-^{(2)}.
$$
By proposition \ref{iptpp},
$$
K_-^{(2)}=\operatorname{cone}\left(\mathcal K_-^{(1)} \to \mathcal K_+(7)[-3]^{\oplus 6}\right)[-1],
$$
so
$$
\Orl_2(\mathcal K_-)=\operatorname{cone}\left( \mathcal O(6)[-4] \to \mathcal O(7)[-4]^{\oplus 6}\right).
$$
If we can repeat above process, then we can compute $\Orl_t(\mathcal K_-)$ for higher $t$. We have
\begin{prop} \label{ahaha}
	There exists a sequence of graded matrix factorizations $\{\mathcal K_-^{(1)},\mathcal K_-^{(2)},\mathcal K_-^{(3)},\dots\}$ in $\DMF^\CR(X_\pm,W)$ such that
	\begin{enumerate}
		\item 
		in $\DMF^\CR(X_-,W)$ we have $\mathcal K_-^{(1)}=\mathcal K_-^{(2)}=\dots=\mathcal K_-$;
		\item
		 the underlying vector space of $\mathcal K_-^{(t)}$ is a direct sum of $\mathcal O(k)[l]$ for $t\le k \le k+5$;
		\item 
		If the direct summand of underlying vector bundle of $\mathcal K_-^{(t)}$ given by all $\mathcal O(t)[l]$ is of the form
		 $$
		 \bigoplus_{i=1}^s \mathcal O(t)[n_i]^{\oplus m_i}, n_1>n_2>\dots>n_s,
		 $$ 
		 then there exists a sequence of graded matrix factorizations $$\{\mathcal K_-^{(t)(0)}=\mathcal K_-^{(t)},\mathcal K_-^{(t)(1)},\mathcal K_-^{(t)(2)},\dots, \mathcal K_-^{(t)(s)}=\mathcal K_-^{(t+1)}\}$$ such that
		 $$
		 \mathcal K_-^{(t)(i+1)}=\operatorname{cone}\left(\mathcal K_-^{(t)(i)}\to \mathcal K_+(t+6)[n_i-2]^{\oplus m_i}\right)[-1]
		 $$
		 in $\DMF^\CR(X_\pm,W)$ for a morphism $\mathcal K_-^{(t)(i)}\to \mathcal K_+(t+6)[n_i-2]^{\oplus m_i}$. The underlying vector bundle of $\mathcal K_-^{(t)(i+1)}$ is obtained by replacing the direct summand $\mathcal O(t)[n_i]^{\oplus m_i}$ in the underlying vector bundle of $\mathcal K_-^{(t)(i)}$ by $\mathcal O(t+6)[n_i-3]^{\oplus m_i}\oplus \mathcal O(t+3)[n_i-2]^{\oplus 2m_i}$.
	\end{enumerate}
\end{prop}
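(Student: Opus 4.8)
The plan is to argue by induction on $t$, at each step peeling off the lowest-$G$-weight direct summands of $\mathcal{K}_-^{(t)}$ one $\CR$-graded piece at a time by means of the replacement operation $\mathcal{M}\mapsto\mathcal{M}\backslash A$ supplied by Lemma~\ref{iptlm}, Proposition~\ref{iptpp} and Corollary~\ref{iptrmk}. Every matrix factorization that occurs is of a fixed combinatorial type: its underlying bundle is a finite direct sum of line bundles $\mathcal{O}(k)[l]$, and its differential is a Koszul-type map whose entries are polynomials in $x_1,\dots,x_6,p_1,p_2$; such data make sense over $V$, hence over both $X_+$ and $X_-$, so each $\mathcal{K}_-^{(t)}$ and $\mathcal{K}_-^{(t)(i)}$ is automatically an object of $\DMF^\CR(X_\pm,W)$.

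For the induction, the base cases $\mathcal{K}_-^{(1)}=\mathcal{K}_-\backslash\mathcal{O}$ and $\mathcal{K}_-^{(2)}=\mathcal{K}_-^{(1)}\backslash\mathcal{O}(1)[-1]^{\oplus 6}$ are the factorizations written out just before the statement. For the inductive step, assume $\mathcal{K}_-^{(t)}$ has been produced with underlying bundle a sum of $\mathcal{O}(k)[l]$, $t\le k\le t+5$, and with $\mathcal{K}_-^{(t)}=\mathcal{K}_-$ in $\DMF^\CR(X_-,W)$; write its weight-$t$ part as $\bigoplus_{j=1}^{s}\mathcal{O}(t)[n_j]^{\oplus m_j}$ with $n_1>\dots>n_s$. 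Set $\mathcal{K}_-^{(t)(0)}:=\mathcal{K}_-^{(t)}$ and, once replaceability has been checked, put $\mathcal{K}_-^{(t)(j)}:=\mathcal{K}_-^{(t)(j-1)}\backslash\big(\mathcal{O}(t)[n_j]^{\oplus m_j}\big)$ for $j=1,\dots,s$ and $\mathcal{K}_-^{(t+1)}:=\mathcal{K}_-^{(t)(s)}$. Then Proposition~\ref{iptpp} with $\mathcal{M}=\mathcal{K}_-^{(t)(j-1)}$ and $A=\mathcal{O}(t)[n_j]^{\oplus m_j}$ gives $C_f\cong(\mathcal{K}_-^{(t)(j-1)}\backslash A)[1]$, i.e. $\mathcal{K}_-^{(t)(j)}\cong\operatorname{cone}\big(\mathcal{K}_-^{(t)(j-1)}\to A\otimes\mathcal{K}_+(6)[-2]\big)[-1]$, and since $A\otimes\mathcal{K}_+(6)[-2]=\mathcal{K}_+(t+6)[n_j-2]^{\oplus m_j}$ this is the cone formula of item~3; Corollary~\ref{iptrmk} gives $\mathcal{K}_-^{(t)(j)}=\mathcal{K}_-^{(t)(j-1)}$ in $\DMF^\CR(X_-,W)$, hence by transitivity item~1; and Lemma~\ref{iptlm} identifies the underlying bundle of $\mathcal{K}_-^{(t)(j)}$ as that of $\mathcal{K}_-^{(t)(j-1)}$ with $\mathcal{O}(t)[n_j]^{\oplus m_j}$ traded for $\mathcal{O}(t+6)[n_j-3]^{\oplus m_j}\oplus\mathcal{O}(t+3)[n_j-2]^{\oplus 2m_j}$, the remaining assertion of item~3. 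After the $s$ replacements every weight-$t$ summand has become summands in weights $t+3$ and $t+6$, so the bundle of $\mathcal{K}_-^{(t+1)}$ is a sum of $\mathcal{O}(k)[l]$ with $t+1\le k\le(t+1)+5$, which is item~2 at level $t+1$.

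The only part that is not a formal application of \S\ref{secorl} is checking, at each stage, that $\mathcal{O}(t)[n_j]^{\oplus m_j}$ really is \emph{replaceable} in $\mathcal{K}_-^{(t)(j-1)}$ in the sense of Definition~\ref{defrpl}, and this is where I expect the work to lie. I would carry along, as part of the induction hypothesis, the structural invariant that every arrow of $\mathcal{K}_-^{(t)(j-1)}$, decomposed according to its dependence on $p_1,p_2$ as $d+p_1\delta^1+p_2\delta^2$, has $d,\delta^1,\delta^2$ with entries in $\mathbb{C}[x_1,\dots,x_6]$, and moreover that each lowest-weight summand receives arrows only from summands of weight at most $t+3$, forcing those arrows to be purely $p$-linear (weight $\ge t+4$ would require at least $p^2$, which the invariant forbids). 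This holds for $\mathcal{K}_-$, whose differential is $s_x\wedge(-)+s_{pf}\vee(-)$ with $s_{pf}=p_1s_{f_1}+p_2s_{f_2}$ and $s_x,s_{f_1},s_{f_2}$ polynomial in the $x_i$, and it is preserved by $\mathcal{M}\mapsto\mathcal{M}\backslash A$ by inspecting the arrows produced in Lemma~\ref{iptlm} --- $-s_W$, $-s_p$, $-(\delta^1_{AB},\delta^2_{AB})$, $\delta^1_{AB}\delta^2_{BC}$ and $-(d_{BA}+p_1\delta^1_{BA}+p_2\delta^2_{BA})\circ s_p$ --- each of which is again of the allowed shape (the last because $s_p$ contributes exactly one factor of $p$ into the weight-$(t+3)$ piece). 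Granting the invariant, condition~1 of Definition~\ref{defrpl} is automatic for the weight-$t$ summand; condition~2, namely $\delta^2_{AB}\delta^1_{BA}=\delta^1_{AB}\delta^2_{BA}=0$, I would deduce exactly as in the proof of Lemma~\ref{iptlm}, by expanding $d_{\mathcal{K}_-^{(t)(j-1)}}^2=W\cdot\Id=(p_1W_1+p_2W_2)\cdot\Id$ and comparing the $p_1^2$- and $p_2^2$-homogeneous components, using the algebraic independence of $x_1,\dots,x_6,p_1,p_2$.

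Putting these together --- items~1--3 for $\mathcal{K}_-^{(t+1)}$ together with the preserved structural invariant --- closes the induction. I expect the genuinely delicate point to be precisely the bookkeeping of the previous paragraph: keeping the replaceability hypotheses (the polynomial form of the arrows, the restriction on which weights map into the lowest-weight summand, and the two orthogonality relations) intact through the whole chain of cone constructions; everything else reduces to the results of \S\ref{secorl}.
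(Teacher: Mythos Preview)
Your overall strategy—an induction in which each $\mathcal{K}_-^{(t)(j)}$ is obtained from $\mathcal{K}_-^{(t)(j-1)}$ by the replacement $\mathcal{M}\mapsto\mathcal{M}\backslash A$ of \S\ref{secorl}—is exactly the paper's approach, and your reduction of items 1--3 to Lemma~\ref{iptlm}, Proposition~\ref{iptpp} and Corollary~\ref{iptrmk} is correct. The gap is precisely where you flagged it: the verification of replaceability.

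Your argument for condition~2 of Definition~\ref{defrpl} does not work. Expanding $d^2=W\cdot\Id$ on the $A$-summand gives
\[
(p_1\delta^1_{AB}+p_2\delta^2_{AB})(d_{BA}+p_1\delta^1_{BA}+p_2\delta^2_{BA})=p_1W_1+p_2W_2,
\]
so the $p_1^2$- and $p_2^2$-components yield $\delta^1_{AB}\delta^1_{BA}=0$ and $\delta^2_{AB}\delta^2_{BA}=0$, while the $p_1p_2$-component only gives $\delta^1_{AB}\delta^2_{BA}+\delta^2_{AB}\delta^1_{BA}=0$. This is not the required pair of vanishings $\delta^2_{AB}\delta^1_{BA}=\delta^1_{AB}\delta^2_{BA}=0$. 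Your argument for condition~1 is also incomplete: you correctly note that arrows into $A$ from summands of $G$-weight $t+1,t+2,t+3$ are forced to be $p$-linear, but at stage $(t)(j)$ there are typically other weight-$t$ summands $\mathcal{O}(t)[n_{j'}]$ with $j'>j$ still present, and your $G$-weight count does not exclude an $x$-polynomial arrow from those into $A$.

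The paper repairs both points with the $\CR$-grading, and this is essentially free once your invariant is in place: since the differential has $\CR$-weight $1$ and the $x_i$ (resp.\ $p_j$) have $\CR$-weight $0$ (resp.\ $2$), any $x$-polynomial piece of an arrow must send $[l]\to[l-1]$ and any $p$-linear piece must send $[l]\to[l+1]$. Then condition~1 follows because the remaining weight-$t$ summands have $\CR$-grading $n_{j'}<n_j$, so an $x$-polynomial arrow from them lands in grading $n_{j'}-1<n_j$ and cannot hit $A$; and condition~2 follows because $\delta^1_{BA}$ (the $p_1$-part of an arrow out of $A$) has image in $B_{[n_j+1]}$ while $\delta^2_{AB}$ (the $p_2$-part of an arrow into $A$) has source $B_{[n_j-1]}$, so the composite $\delta^2_{AB}\delta^1_{BA}$ vanishes for grading reasons, and symmetrically for $\delta^1_{AB}\delta^2_{BA}$. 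This $\CR$-bookkeeping, rather than your $p_1^2$/$p_2^2$ comparison, is what closes the induction.
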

\begin{proof}
	If we can show that $\mathcal O(t)[n_i]^{\oplus m_i}$ is replaceable in $\mathcal K_-^{(t)(i)}$, then we can define 
	$$\mathcal K_-^{(t)(i+1)}:=\mathcal K_-^{(t)(i)}\backslash \mathcal O(t)[n_i]^{\oplus m_i}.$$ We show it is always possible.
	We have written down $\mathcal K_-^{(1)}$ and $\mathcal K_-^{(2)}$ already.  Note that $\mathcal K_-^{(1)}$ and $\mathcal K_-^{(2)}$ satisfy the following property: the differentials of them are the sum of following types of morphisms:
	\begin{enumerate}
		\item 
		the zero extension of the morphism $\mathcal O(k)[l]^{\oplus m} \xrightarrow{f}\mathcal O(k+i)[l-1]^{\oplus n}$, where 
		$$
		i\in \mathbb Z^{\ge 0},
		$$
		and $f$ can be represented by matrix with entries in $\mathbb C[x_1,\dots,x_6]$;
		\item 
		the zero extension of the morphism $\mathcal O(k)[l]^{\oplus m} \xrightarrow{p_1f_1+p_2f_2}\mathcal O(k+i)[l+1]^{\oplus n}$, where 
		$$
		i\in \mathbb Z^{\ge -3},
		$$
		and $f_1,f_2$ can be represented by matrix with entries in $\mathbb C[x_1,\dots,x_6]$.
	\end{enumerate}
	If $\mathcal K_-^{(t)(i)}$ satisfies the property, since all the direct summands of $\mathcal K_-^{(t)(i)}$ is of the form $\mathcal O(s)[j]$ with $s>t$ or $s=t, j\le n_i$, we have all the nonzero arrow with target $\mathcal O(t)[n_i]^{\oplus m_i}$ are of type 2. Moreover, we can not find two successive nonzero type-2 arrows, one of which start from $\mathcal O(t)[n_i]^{\oplus m_i}$ and the other one go back to $\mathcal O(t)[n_i]^{\oplus m_i}$. This means $\mathcal O(t)[n_i]^{\oplus m_i}$ is replaceable in $\mathcal K_-^{(t)(i)}$. By construction in lemma \ref{iptlm}, 
	$$\mathcal K_-^{(t)(i+1)}:=\mathcal K_-^{(t)(i)}\backslash \mathcal O(t)[n_i]^{\oplus m_i}$$ also satisfies above property, thus we can define all $\mathcal K_-^{(t)(i)}$ inductively.
\end{proof}
\begin{rmk}
	We can get another totally similar sequence $\{\mathcal K_-^{(0)},\mathcal K_-^{(-1)},\mathcal K_-^{(-2)},\dots\}$ if we replace $\mathcal O(-t+5)[l]^{\oplus m}$ in the underlying vector bundle of $\mathcal K_-^{(-t)}$ by  $\mathcal O(-t+2)[l+2]^{\oplus 2m}\oplus \mathcal O(-t-1)[l+3]^{\oplus m}$ to get $\mathcal K_-^{(-t-1)}$.
\end{rmk}

From \textit{1} and \textit{2} in proposition \ref{ahaha}, we know 
$$
\Phi_t(\mathcal K_-)=\mathcal K_-^{(t)}
$$
Since $\mathcal K_-^{(t)}$ can be obtained by taking cones of morphism to $\mathcal K_+(k)[n]^{\oplus m}$ repeatedly, after applying the functor $(i_*\circ p^*)^{-1}$, we know $\Orl_t(\mathcal K_-)$ can be obtained by taking cones of morphism to $\mathcal O(k)[n]^{\oplus m}$ repeatedly. In particular, we can compute $\ch(\Orl_t(\mathcal K_-))$.

\begin{thm} For all integers $t,q$ and $m$ we have
	$$\ch(\Orl_t(\mathcal K_-(q)[m]))=(-1)^m\sum_{\substack{t-3\le s \le t+2\\s\equiv q \mod 3}}\frac{s}{3}\sum_{k=0}^{t-s+2}(-1)^{k+1}\binom{6}{k}e^{(k+s+3)p}$$
\end{thm}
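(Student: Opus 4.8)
\emph{Strategy.} The plan is to pin down $\Orl_t(\mathcal K_-)$ explicitly enough to read off its Chern character, using the iterated--cone presentation of Proposition~\ref{ahaha}, and then to let the twist $\otimes\mathcal O(q)$ and the shift $[m]$ account for the general $q,m$. By Proposition~\ref{ppsft} we have $\Orl_t(\mathcal K_-(q)[m])=\Orl_{t-q}(\mathcal K_-)(q)[m]$, and in $\Db(X_{3,3})$ the operations $\otimes\mathcal O(q)$ and $[m]$ scale the Chern character by $e^{qp}$ and by $(-1)^m$. So I would first establish a closed formula for $\ch\big(\Orl_{t}(\mathcal K_-)\big)$, $t\in\mathbb Z$, and recover the stated double sum by re--indexing afterwards.

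\emph{A recursion for $\ch(\Orl_t(\mathcal K_-))$.} Fix $t\ge 1$. Parts (1) and (2) of Proposition~\ref{ahaha} identify $\Phi_t(\mathcal K_-)$ with $\mathcal K_-^{(t)}$, and part (3) exhibits $\mathcal K_-^{(t+1)}$ as the result of applying to $\mathcal K_-^{(t)}$ a finite chain of operations $\operatorname{cone}\!\big(\,\cdot\to\mathcal K_+(t+6)[n_i-2]^{\oplus m_i}\big)[-1]$, one for each summand $\mathcal O(t)[n_i]^{\oplus m_i}$ of the underlying bundle of $\mathcal K_-^{(t)}$ in degree $\mathcal O(t)$. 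Applying $(i_*\circ p^*)^{-1}$ together with Proposition~\ref{trick} (which carries $\mathcal K_+(k)[l]$ to $\mathcal O(k)[l]$), and using additivity of $\ch$ on triangles (so that $\ch(\operatorname{cone}(f\colon A\to B)[-1])=\ch(A)-\ch(B)$), one obtains
\begin{equation*}
\ch\big(\Orl_{t+1}(\mathcal K_-)\big)=\ch\big(\Orl_{t}(\mathcal K_-)\big)-c_t\,e^{(t+6)p},\qquad c_t:=\sum_i(-1)^{n_i}m_i .
\end{equation*}
Together with the base case $\Orl_1(\mathcal K_-)=\mathcal O(6)[-3]$, i.e. $\ch=-e^{6p}$, this determines $\ch(\Orl_t(\mathcal K_-))$ for all $t\ge 1$; the case $t\le 0$ is identical, using the dual sequence from the remark after Proposition~\ref{ahaha}.

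\emph{Computing the coefficients $c_t$.} The underlying bundle of $\mathcal K_-$ is $\bigwedge^\bullet\mathcal O(1)[-1]^{\oplus 6}=\bigoplus_{j=0}^{6}\mathcal O(j)[-j]^{\oplus\binom6j}$, and by Proposition~\ref{ahaha}(3) passing from $\mathcal K_-^{(t)}$ to $\mathcal K_-^{(t+1)}$ replaces each $\mathcal O(t)[n]^{\oplus m}$ by $\mathcal O(t+6)[n-3]^{\oplus m}\oplus\mathcal O(t+3)[n-2]^{\oplus 2m}$. Recording only the signed multiplicities $\nu^{(t)}_k:=\sum_l(-1)^l\mu^{(t)}_{k,l}$, where $\mu^{(t)}_{k,l}$ is the multiplicity of $\mathcal O(k)[l]$ in $\mathcal K_-^{(t)}$ (so $c_t=\nu^{(t)}_t$), this replacement becomes the linear recursion $\nu^{(t+1)}_t=0$, $\nu^{(t+1)}_{t+3}=\nu^{(t)}_{t+3}+2\nu^{(t)}_t$, $\nu^{(t+1)}_{t+6}=\nu^{(t)}_{t+6}-\nu^{(t)}_t$, all other coordinates unchanged, started from the values read off from $\mathcal K_-^{(1)}=\mathcal K_-\backslash\mathcal O$. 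This only ever moves the six ``live'' coordinates $\nu^{(t)}_t,\dots,\nu^{(t)}_{t+5}$, and solving it gives $c_t$ in closed form (one gets $c_t=-6t$, $3(t-3)$, $3(t+3)$ according as $t\equiv0,1,2\pmod 3$).

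\emph{Conclusion and the main obstacle.} Feeding the closed form of $c_t$ into the recursion and summing the telescoping series yields $\ch(\Orl_t(\mathcal K_-))$ as an explicit integer combination of the classes $e^{jp}$; matching it with the double sum in the statement is then a direct computation, whose one substantial ingredient is that $p^4=0$ in $H^*(X_{3,3})$, equivalently $(1-e^p)^6 e^{ap}=0$, i.e. $\sum_{k=0}^{6}(-1)^k\binom6k e^{(a+k)p}=0$ (the sixth finite difference of a cubic), which is exactly what collapses the binomial sums. The general $q,m$ case then follows from the reduction above. I expect the main obstacle to be purely combinatorial rather than structural: keeping accurate track of which line bundles $\mathcal O(k)[l]$, with which $\CR$--weights, occur in $\mathcal K_-^{(t)}$ at each stage (Proposition~\ref{ahaha} already guarantees the replaceability hypothesis persists, so there is no categorical subtlety), and then reconciling the telescoped answer with the summation over $s$ and $k$ as written in the theorem.
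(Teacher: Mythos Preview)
Your proposal is correct and follows essentially the same approach as the paper. Both arguments reduce to $q=m=0$ via Proposition~\ref{ppsft}, use the base case $\Orl_1(\mathcal K_-)=\mathcal O(6)[-3]$, read off from Proposition~\ref{ahaha}(3) that $\ch(\Orl_{t+1}(\mathcal K_-))-\ch(\Orl_t(\mathcal K_-))=-c_t\,e^{(t+6)p}$ with $c_t=\sum_i(-1)^{n_i}m_i$, establish a linear recursion for the $c_t$ (your signed-multiplicity vector $\nu^{(t)}$ encodes exactly the paper's observation that the $\mathcal O(t)$-summands at stage $t$ descend from those at stages $t-3$ and $t-6$, yielding $c_t=2c_{t-3}-c_{t-6}$), and finish by telescoping and invoking $(1-e^p)^6=0$. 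Your explicit closed form for $c_t$ is consistent with the paper's tabulated values.
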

\begin{proof}
	We know $\Orl_1(\mathcal K_-)=\mathcal O(6)[-3]$, so $\ch(\Orl_t(\mathcal K_-))=-e^{6p}$. Now we compute $$\ch(\Orl_{t+1}(\mathcal K_-))-\ch(\Orl_t(\mathcal K_-)).$$
	If the direct summand of underlying vector bundle of $\mathcal K_-^{(t)}$ given by all $\mathcal O(t)[l]$ is of the form
	$$
	\bigoplus_{i=1}^s \mathcal O(t)[n_i]^{\oplus m_i},
	$$
	by \textit{3} of proposition \ref{ahaha} there exists a sequence of graded matrix factorizations $$\{\mathcal K_-^{(t)(0)}=\mathcal K_-^{(t)},\mathcal K_-^{(t)(1)},\mathcal K_-^{(t)(2)},\dots, \mathcal K_-^{(t)(s)}=\mathcal K_-^{(t+1)}\}$$ such that
	$$
	\mathcal K_-^{(t)(i+1)}=\operatorname{cone}\left(\mathcal K_-^{(t)(i)}\to K_+(t+6)[n_i-2]^{\oplus m_i}\right)[-1].
	$$
	Therefore, 
	\[
	\begin{split}
	\ch(\Orl_{t+1}(\mathcal K_-))-ch(\Orl_t(\mathcal K_-))&=\ch\left((i_*\circ p^*)^{-1}(\mathcal K_-^{(t)(s)})\right)-	\ch\left((i_*\circ p^*)^{-1}(\mathcal K_-^{(t)(0)})\right)\\
	&=-\sum_{i=1}^s\ch\left(\mathcal O(t+6)[n_i-2]^{\oplus m_i}\right).
	\end{split}
	\]
	When $t\ge 7$, the direct summand $\mathcal O(t)[n_i]^{\oplus m_i}$ in the underlying vector bundle of $\mathcal K_-^{(t)}$ comes from  $\mathcal O(t-3)[n_i+2]^{\oplus a_i}$ in the underlying vector bundle of $\mathcal K_-^{(t-3)}$ and $\mathcal O(t-6)[n_i+3]^{\oplus b_i}$ in the underlying vector bundle of $\mathcal K_-^{(t-6)}$, with $2a_i+b_i=n_i$, therefore when $t\ge 7$ we have
	\[
	\begin{split}
	\ch(\Orl_{t+1}(\mathcal K_-))-\ch(\Orl_t(\mathcal K_-))=&2e^{3p}\left(\ch(\Orl_{t-2}(\mathcal K_-))-\ch(\Orl_{t-3}(\mathcal K_-))\right)\\
	&-e^{6p}\left(\ch(\Orl_{t-5}(\mathcal K_-))-\ch(\Orl_{t-6}(\mathcal K_-))\right).
	\end{split}
	\]
	We can compute directly that
	\[
	\begin{split}
	\ch(\Orl_{2}(\mathcal K_-))-\ch(\Orl_1(\mathcal K_-))&=6e^{7p},\\
	\ch(\Orl_{3}(\mathcal K_-))-\ch(\Orl_2(\mathcal K_-))&=-15e^{8p},\\
	\ch(\Orl_{4}(\mathcal K_-))-\ch(\Orl_3(\mathcal K_-))&=20e^{9p}-2\cdot e^{9p},\\
	\ch(\Orl_{5}(\mathcal K_-))-\ch(\Orl_4(\mathcal K_-))&=-15e^{10p}+2\cdot 6e^{10p},\\
	\ch(\Orl_{6}(\mathcal K_-))-\ch(\Orl_5(\mathcal K_-))&=6e^{11p}-2\cdot 15e^{11p},\\
	\ch(\Orl_{7}(\mathcal K_-))-\ch(\Orl_6(\mathcal K_-))&=-e^{12p}+2\cdot 20e^{12p}-3\cdot e^{12p}.\\
	\end{split}
	\]
	Then we can check that for all $t\ge 1$ we have
	$$\ch(\Orl_{t+1}(\mathcal K_-))-\ch(\Orl_t(\mathcal K_-))=(-1)^{s+1}\lceil\frac{t+1}{3}\rceil\binom{6}{s}+(-1)^{s}\lceil\frac{t+1}{3}-1\rceil\binom{6}{s+3}+(-1)^{s+1}\lceil\frac{t+1}{3}-2\rceil\binom{6}{s+6},$$
	where $s\in \{0,1,2\}$, $s\equiv t \mod 3$, and we set $\binom{6}{k}=0$ if $k>6$. Using the fact that in $H_{\GW}$
	$$
	\sum_{i=0}^6 (-1)^k \binom{6}{k}e^{ip}=(1-e^p)^6=0
	$$
	for dimension reason, we compute
	\[
	\begin{split}
	\ch(\Orl_t(\mathcal K_-))&=\ch(\Orl_1(\mathcal K_-))+\sum_{i=1}^{t-1}\left(\ch(\Orl_{i}(\mathcal K_-))-\ch(\Orl_{i-1}(\mathcal K_-))\right)\\
	&=\sum_{n=1}^{\lceil\frac{t}{3}\rceil}n\sum_{\substack{0\le k \le 6\\3n+k-3\le t}}(-1)^{k+1}\binom{6}{k} e^{(3n+k+3)p}\\
	&=\sum_{\substack{t-3\le s \le t+2\\s\equiv 0 \mod 3}}\frac{s}{3}\sum_{k=0}^{t-s+2}(-1)^{k+1}\binom{6}{k}e^{(s+k+3)p}
	\end{split}
	\]
	Then use proposition \ref{ppsft}, we get
	\[
	\begin{split}
	\ch(\Orl_t(\mathcal K_-(q)[m]))&=(-1)^me^{qp}\ch(\Orl_{t-q}(\mathcal K_-))\\
	&=(-1)^m\sum_{\substack{t-q-3\le s \le t-q+2\\s\equiv 0 \mod 3}}\frac{s}{3}\sum_{k=0}^{t-q-s+2}(-1)^{k+1}\binom{6}{k}e^{(k+s+q+3)p}\\
	&=(-1)^m\sum_{\substack{t-3\le s \le t+2\\s\equiv q \mod 3}}\frac{s-q}{3}\sum_{k=0}^{t-s+2}(-1)^{k+1}\binom{6}{k}e^{(k+s+3)p}.
	\end{split}
	\]
\end{proof}

\subsection{Chern character of $\mathcal K_-(q)[m]$}

The Chern character of $\DMF^\CR(X_-,W)$ (more precisely, of $\mathcal{MF}^\CR(X_-,W)$) takes value in the Hochschild cohomology $HH(\mathcal{MF}^\CR(X_-,W))$. We do not have an isomorphism between 
$HH(\mathcal{MF}^\CR(X_-,W))$ and $H_{\RW}$ in complete intersection case currently. But since all Chern characters satisfy Grothendieck--Riemann--Roth, we can define $H_{\RW}$-value Chern character for objects coming from push-forward as follows. Consider the inclusion 
$$
j\colon \P(3,3) \to X_-=\mathcal O_{\P(3,3)}(-1)^{\oplus 6}
$$
as zero section. The function $W$ is $0$ when restricting to $\P(3,3)$, thus we have the push-forward functor 
$$
j_*\colon \Db(\P(3,3)) \to \DMF^\CR(X_-,W).
$$
Then we use Grothendieck--Riemann--Roth to define
$$
\ch\left(j_*(\mathcal E)\right):=\left.\left(\ch(\mathcal E)\cdot \frac{1}{\operatorname{Td}( O_{\P(3,3)}(-1)^{\oplus 6})}\right)\right\vert_{\text{nar}}.
$$
The Todd class (see Appendix) of the bundle $O_{\P(3,3)}(-1)^{\oplus 6}$ is computed as
\[
\begin{split}
\operatorname{Td}( O_{\P(3,3)}(-1)^{\oplus 6})\vert_{\text{nar}}&=\left(\operatorname{Td}( O_{\P(3,3)}(-1))\right)^6\vert_{\text{nar}}\\
&=\left(\frac{1}{1-\zeta e^{\frac{H^{(1)}}{3}}}\mathds{1}^{(1)}+\frac{1}{1-\zeta^2 e^{\frac{H^{(2)}}{3}}}\mathds{1}^{(2)}\right)^6,
\end{split}
\]
hence
\[
\begin{split}
\left.\left( \frac{1}{\operatorname{Td}( O_{\P(3,3)}(-1)^{\oplus 6})}\right)\right\vert_{\text{nar}}&=\left(\mathds{1}^{(1)}-\zeta e^{\frac{H^{(1)}}{3}}\right)^6+\left(\mathds{1}^{(2)}-\zeta^2 e^{\frac{H^{(2)}}{3}}\right)^6\\
&=\sum_{k=1}^2\left((1-\zeta^k)\mathds 1^{(k)}-\frac{1}{3}\zeta^k H^{(k)}\right)^6.
\end{split}
\]
On the other hand, as mentioned in remark \ref{rmkkj}, $$\mathcal K_-(q)[m]=j_*\left(\mathcal O_{\mathbb P(3,3)}(-q-6)[m-6]\right),$$ 
and
\[
\begin{split}
\ch\left(\mathcal O_{\mathbb P(3,3)}(-q-6)[m-6]\right)\vert_\text{nar}&=(-1)^{m-6}\ch\left(\mathcal O_{\mathbb P(3,3)}(-1)\right)^{q+6}\vert_\text{nar}\\
&=(-1)^m\sum_{k=1}^2\left(\zeta^{-k}\mathds{1}^{(k)}-\frac{1}{3}\zeta^{-k}H^{(k)}\right)^{q+6}.
\end{split}
\]
 So we get
$$
\ch(\mathcal K_-(q)[m])=(-1)^m\sum_{k=1}^2\left(\zeta^{-k}\mathds{1}^{(k)}-\frac{1}{3}\zeta^{-k}H^{(k)}\right)^{q+6}\left((1-\zeta^k)\mathds 1^{(k)}-\frac{1}{3}\zeta^k H^{(k)}\right)^6.
$$
\subsection{Image of $\ch(\mathcal K_-(q)[m])$ under $\mathbb U_l$}

We can expand $\ch(\mathcal K_-(q)[m])$ as
\[
\begin{split}
\ch(\mathcal K_-(q)[m])=&(-1)^m\sum_{k=1}^2 \zeta^{-k(q+6)}(1-\zeta^k)^6\mathds{1}^{(k)}\\
&-2\cdot(-1)^m\sum_{k=1}^2 \zeta^{-k(q+5)}(1-\zeta^k)^5H^{(k)}\\
&-\frac{q+6}{3}(-1)^m\sum_{k=1}^2 \zeta^{-k(q+6)}(1-\zeta^k)^6H^{(k)}.
\end{split}
\]
There are three types of elements in the expansion, we compute their image under $\mathbb U_l$.

\begin{prop}
	For any integer $l$ and $q$, the following 3 equations hold.

		\begin{equation}\label{elem1}
		\begin{split}
		\mathbb U_l \left(\sum_{k=1}^2 \zeta^{-qk}\left(1-\zeta^k\right)^6\mathds{1}^{(k)}\right)
		=&\frac{1}{3}\sum_{\substack{	s\equiv q \mod 3\\l\le s \le l+5}}s\sum_{k=0}^{s-l}(-1)^{l+k+6-s}\begin{pmatrix}
		6\\l+k+6-s
		\end{pmatrix}e^{(k+l)p}\\
		&-2\sum_{\substack{	s\equiv q \mod 3\\l\le s \le l+5}}\sum_{k=0}^{s-l-1}(-1)^{s-l-k}\begin{pmatrix}
		5\\s-l-k-1
		\end{pmatrix}e^{(k+l)p};\\
		\end{split}
		\end{equation}

		\begin{equation}\label{elem2}
		\begin{split}
		&\mathbb U_l\left(\sum_{k=1}^2 \zeta^{-qk}\left(1-\zeta^k\right)^5H^{(k)}\right)
		=\sum_{\substack{	s\equiv q \mod 3\\l\le s \le l+4}}\sum_{k=0}^{s-l}(-1)^{s-l-k}\begin{pmatrix}
		5\\s-l-k
		\end{pmatrix}e^{(k+l)p};\\
		\end{split}
		\end{equation}
 	\begin{equation}\label{elem3}
		\begin{split}
		&\mathbb U_l\left(\sum_{k=1}^2 \zeta^{-qk}\left(1-\zeta^k\right)^6H^{(k)}\right)
		=\sum_{\substack{	s\equiv q \mod 3\\l\le s \le l+5}}\sum_{k=0}^{s-l}(-1)^{s-l-k}\begin{pmatrix}
		6\\s-l-k
		\end{pmatrix}e^{(k+l)p}.\\
		\end{split}
		\end{equation}
\end{prop}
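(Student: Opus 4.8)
All three identities are proved in parallel, and the plan is to convert each left-hand side into a formal matrix product via the explicit description (\ref{mirmatrix}) of $\mathbb U_l$, collapse that product with a discrete Fourier argument, and then observe that only finitely many terms survive in $H_{\GW}$ because $p^4=0$ there.

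First I would rewrite the input: since $\zeta^{0}=1$ we have $(1-\zeta^{0})^{t}=0$ for $t\ge 1$, so adjoining the formal summand $k=0$ is harmless, and
\[
\sum_{k=1}^{2}\zeta^{-qk}(1-\zeta^{k})^{t}\mathds{1}^{(k)}
=\bigl(\mathds{1}^{(0)},\mathds{1}^{(1)},\mathds{1}^{(2)}\bigr)\,\operatorname{diag}(1,\zeta^{-q},\zeta^{-2q})\,\bigl(\zeta^{ek}\bigr)_{k,e}\,\bigl((-1)^{e}\binom{t}{e}\bigr)_{0\le e\le t},
\]
and likewise with $H^{(k)}$ in place of $\mathds{1}^{(k)}$. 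Substituting (\ref{mirmatrix}) for $\mathbb U_l$ replaces the row $(\mathds{1}^{(0)},\mathds{1}^{(1)},\mathds{1}^{(2)})$ by $\tfrac19 e^{pl}(1,e^{p},e^{2p},\dots)$ times the infinite matrix whose $(j,c)$-entry is $(l+j)\zeta^{cj}$ times $\operatorname{diag}(1,\zeta^{l},\zeta^{2l})$ — and, in the $H^{(k)}$ case, by $\tfrac13 e^{pl}(1,e^{p},\dots)$ times the matrix with all entries $\zeta^{cj}$ times the same diagonal. Merging the two diagonal $\zeta$-matrices and summing the resulting product over $c\in\{0,1,2\}$ via $\sum_{c=0}^{2}\zeta^{c(a-b)}=3$ or $0$ according as $a\equiv b\pmod 3$ or not, the whole expression collapses to
\[
\tfrac13 e^{pl}\sum_{j\ge 0}(l+j)\,e^{jp}\sum_{\substack{0\le e\le 6\\ e\equiv q-l-j\ (3)}}(-1)^{e}\binom{6}{e}
\]
for (\ref{elem1}), and to the analogous expression without the factor $(l+j)$ and with $\binom{6}{e}$ replaced by $\binom{5}{e}$, resp.\ $\binom{6}{e}$, for (\ref{elem2}), resp.\ (\ref{elem3}).

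Next I would show this a priori infinite double sum is finite in $H_{\GW}$. Grouping the pairs $(j,e)$ into ``slash lines'' $j+e=s$ — nonempty only when $s\equiv q-l\pmod 3$ — and using $\sum_{e}(-1)^{e}\binom{t}{e}x^{e}=(1-x)^{t}$ together with $e\binom{6}{e}=6\binom{5}{e-1}$, the contribution of a slash line $s$ long enough to contain every $e=0,\dots,t$ is a combination of $(1-e^{-p})^{t}$ and $e^{-p}(1-e^{-p})^{t-1}$ with coefficients in $\mathbb C[p]$; both vanish, since $p^{4}=0$ forces $(1-e^{-p})^{4}=0$ and here $t\in\{5,6\}$. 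Hence only the finitely many boundary slash lines survive: reindexing so that $s$ denotes the relevant exponent bound, these are exactly the $s$ with $l\le s\le l+t-1$ and $s\equiv q\pmod 3$, and for each such $s$ the surviving partial sum is $\tfrac13\sum_{k=0}^{s-l}(l+k)(-1)^{s-l-k}\binom{6}{s-l-k}e^{(k+l)p}$ in case (\ref{elem1}), and the binomial sums displayed on the right-hand sides of (\ref{elem2}) and (\ref{elem3}) in the other two cases.

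Finally, for (\ref{elem1}) I would split $l+k=s-(s-l-k)$: the part with coefficient $s$ becomes, using $\binom{6}{s-l-k}=\binom{6}{l+k+6-s}$, the first sum of (\ref{elem1}), while the part $-(s-l-k)\binom{6}{s-l-k}=-6\binom{5}{s-l-k-1}$ becomes the second; for (\ref{elem2}) and (\ref{elem3}) no further rearrangement is needed. The main obstacle is purely bookkeeping in the collapse step — keeping straight the shift between the matrix summation index $j$ and the exponent $k+l$ in the output, and verifying that the surviving slash lines land precisely in the stated ranges with the stated residue class — but no new idea beyond the $\mathbb Z/3$ character orthogonality and the identity $e\binom{6}{e}=6\binom{5}{e-1}$ is involved.
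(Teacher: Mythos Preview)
Your proposal is correct and follows essentially the same route as the paper's own proof: rewrite the input as a matrix product after adjoining the harmless $k=0$ term, apply the explicit description (\ref{mirmatrix}) of $\mathbb U_l$, collapse via the $\mathbb Z/3$ character orthogonality to a sum over slash lines, kill the long slash lines using $p^4=0$ (hence $(1-e^{\pm p})^{t}=0$ for $t\ge 4$) together with $e\binom{6}{e}=6\binom{5}{e-1}$, and finally split $l+k=s-(s-l-k)$ in the (\ref{elem1}) case. The paper carries this out explicitly only for (\ref{elem1}) and declares the other two analogous, whereas you phrase the three cases uniformly; but the argument is the same.
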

\begin{proof}
	We only prove equation (\ref{elem1}), the other two can be proven in the same way. We add formal element $\mathds 1^{(0)}$ then we have
	\begin{equation}
	\begin{split}
	&\sum_{k=1}^2 \zeta^{-qk}\left(1-\zeta^k\right)^6\mathds{1}^{(k)}\\
	=&\sum_{k=0}^2 \zeta^{-qk}\left(1-\zeta^k\right)^6\mathds{1}^{(k)}\\
	=& \begin{pmatrix}
	\mathds 1^{(0)} &\mathds 1^{(1)}&\mathds 1^{(2)}
	\end{pmatrix}
	\begin{pmatrix}
	1&&\\
	&\zeta^{-q}&\\
	&&\zeta^{-2q}
	\end{pmatrix}
	\begin{pmatrix}
	1&1&1&1&1&1&1\\
	1&\zeta&\zeta^2&1&\zeta&\zeta^2&1\\
	1&\zeta^2&\zeta&1&\zeta^2&\zeta&1
	\end{pmatrix}
	\begin{pmatrix}
	1\\-6\\15\\-20\\15\\-6\\1
	\end{pmatrix}.
	\end{split}
	\end{equation}
	By (\ref{mirmatrix}) we have
	\begin{equation}
	\begin{split}
	&\mathbb U_l \left(\sum_{k=1}^2 \zeta^{-qk}\left(1-\zeta^k\right)^6\mathds{1}^{(k)}\right)\\
	=&\frac{1}{9}e^{pl}
	\begin{pmatrix}
	1 & e^p & e^{2p} & e^{3p} \dots 
	\end{pmatrix}
	\begin{pmatrix}
	l&l&l\\
	l+1&(l+1)\zeta&(l+1)\zeta^2\\
	l+2&(l+2)\zeta^2&(l+2)\zeta\\
	l+3&l+3&l+3\\
	l+4&(l+4)\zeta&(l+4)\zeta^2\\
	l+5&(l+5)\zeta^2&(l+5)\zeta\\
	\vdots&\vdots&\vdots
	\end{pmatrix}
	\begin{pmatrix}
	1&&\\
	&\zeta^l&\\		&&\zeta^{2l}
	\end{pmatrix}\\
	&\cdot\begin{pmatrix}
	1&&\\
	&\zeta^{-q}&\\
	&&\zeta^{-2q}
	\end{pmatrix}
	\begin{pmatrix}
	1&1&1&1&1&1&1\\
	1&\zeta&\zeta^2&1&\zeta&\zeta^2&1\\
	1&\zeta^2&\zeta&1&\zeta^2&\zeta&1
	\end{pmatrix}
	\begin{pmatrix}
	1\\-6\\15\\-20\\15\\-6\\1
	\end{pmatrix}\\
	=&\frac{1}{3}e^{pl}
	\begin{pmatrix}
	1 & e^p & e^{2p} & e^{3p} \dots 
	\end{pmatrix}
	\begin{pmatrix}
	&&l&&&l&\\
	&\reflectbox{$\ddots$}&&&\reflectbox{$\ddots$}&&\\
	m&&&m&&&m\\
	&&m+1&&&m+1&\\
	&m+2&&&m+2&&\\
	m+3&&&m+3&&&m+3\\
	&&\reflectbox{$\ddots$}&&&\reflectbox{$\ddots$}&\\
	&\reflectbox{$\ddots$}&&&\reflectbox{$\ddots$}&&
	\end{pmatrix}
	\begin{pmatrix}
	1\\-6\\15\\-20\\15\\-6\\1
	\end{pmatrix},
	\end{split}
	\end{equation}
	where $m\in\{l,l+1,l+2\}$, and $m\equiv q \mod 3$. Note that the centre matrix consists of slope-1 lines. Start from the third line, the contribution of each line to $\mathbb U_l \left(\sum_{k=1}^2 \zeta^{-qk}\left(1-\zeta^k\right)^6\mathds{1}^{(k)}\right)$ is 
	\begin{equation*}
	\begin{split}
	&\frac{1}{3}e^{pl}\sum_{k=0}^6 (m+3t+k)(-1)^{6-k}\begin{pmatrix}
	6\\6-k
	\end{pmatrix}e^{kp}\\
	=&\frac{1}{3}e^{pl}(m+3t)(1-e^p)^6+\frac{1}{3}e^{pl}\sum_{k=1}^6(-1)^{6-k}\cdot 6 \cdot \begin{pmatrix}
	5\\k-1
	\end{pmatrix}e^{kp}\\
	=&\frac{1}{3}e^{pl}(m+3t)(1-e^p)^6-2e^{(l+1)p}(1-e^p)^5=0.
	\end{split}
	\end{equation*}
	Thus only the first two lines contribute. We have
	\begin{equation}
	\begin{split}
	&\mathbb U_l \left(\sum_{k=1}^2 \zeta^{-qk}\left(1-\zeta^k\right)^6\mathds{1}^{(k)}\right)\\
	=&\frac{1}{3}\sum_{\substack{	s\equiv q \mod 3\\l\le s \le l+5}}\sum_{k=0}^{s-l}(l+k)(-1)^{s-l-k}\begin{pmatrix}
	6\\s-l-k
	\end{pmatrix}e^{(k+l)p}\\
	=&\frac{1}{3}\sum_{\substack{	s\equiv q \mod 3\\l\le s \le l+5}}s\sum_{k=0}^{s-l}(-1)^{l+k+6-s}\begin{pmatrix}
	6\\l+k+6-s
	\end{pmatrix}e^{(k+l)p}\\
	&-\frac{1}{3}\sum_{\substack{	s\equiv q \mod 3\\l\le s \le l+5}}\sum_{k=0}^{s-l}(s-l-k)(-1)^{s-l-k}\begin{pmatrix}
	6\\s-l-k
	\end{pmatrix}e^{(k+l)p}\\
	=&\frac{1}{3}\sum_{\substack{	s\equiv q \mod 3\\l\le s \le l+5}}s\sum_{k=0}^{s-l}(-1)^{l+k+6-s}\begin{pmatrix}
	6\\l+k+6-s
	\end{pmatrix}e^{(k+l)p}\\
	&-2\sum_{\substack{	s\equiv q \mod 3\\l\le s \le l+5}}\sum_{k=0}^{s-l-1}(-1)^{s-l-k}\begin{pmatrix}
	5\\s-l-k-1
	\end{pmatrix}e^{(k+l)p}\\
	\end{split}
	\end{equation}
\end{proof}
\begin{cor}
	The image of $\ch(\mathcal K_-(q)[m]$ under $\mathbb U_l$ is
	\begin{equation}
	\mathbb U_l\left(\ch(\mathcal K_-(q)[m])\right)=(-1)^m\sum_{\substack{	s\equiv q \mod 3\\l\le s \le l+5}}\frac{s-q-6}{3}\sum_{k=0}^{s-l}(-1)^{l+k+6-s}\begin{pmatrix}
	6\\l+k+6-s
	\end{pmatrix}e^{(k+l)p}.
	\end{equation}
\end{cor}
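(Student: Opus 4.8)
The plan is to compute $\mathbb U_l(\ch(\mathcal K_-(q)[m]))$ directly, by linearity, feeding the three summands in the expansion of $\ch(\mathcal K_-(q)[m])$ into the formulas \eqref{elem1}, \eqref{elem2}, \eqref{elem3} of the preceding proposition. Since
\[
\ch(\mathcal K_-(q)[m])=(-1)^m\sum_{k=1}^2 \zeta^{-k(q+6)}(1-\zeta^k)^6\mathds{1}^{(k)}-2(-1)^m\sum_{k=1}^2 \zeta^{-k(q+5)}(1-\zeta^k)^5H^{(k)}-\tfrac{q+6}{3}(-1)^m\sum_{k=1}^2 \zeta^{-k(q+6)}(1-\zeta^k)^6H^{(k)},
\]
applying $\mathbb U_l$ amounts to invoking \eqref{elem1} with $q$ replaced by $q+6$, \eqref{elem2} with $q$ replaced by $q+5$, and \eqref{elem3} with $q$ replaced by $q+6$, then adding these three outputs with coefficients $1$, $-2$ and $-\tfrac{q+6}{3}$, and multiplying the total by $(-1)^m$.

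First I would collect the ``$\tbinom{6}{\,\cdot\,}$''-terms. Because $q+6\equiv q\bmod 3$, the leading double sum of \eqref{elem1} (with $q\mapsto q+6$) is $\tfrac13\sum_{s\equiv q}s\sum_k(-1)^{l+k+6-s}\binom{6}{l+k+6-s}e^{(k+l)p}$; and \eqref{elem3} (with $q\mapsto q+6$), after the elementary identity $(-1)^{s-l-k}\binom{6}{s-l-k}=(-1)^{l+k+6-s}\binom{6}{l+k+6-s}$, equals $\sum_{s\equiv q}\sum_k(-1)^{l+k+6-s}\binom{6}{l+k+6-s}e^{(k+l)p}$, both over the range $l\le s\le l+5$. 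Combining these two with the coefficient $-\tfrac{q+6}{3}$ on the second produces exactly the coefficient $\tfrac{s-q-6}{3}$ in front of $\sum_k(-1)^{l+k+6-s}\binom{6}{l+k+6-s}e^{(k+l)p}$, i.e.\ the right-hand side of the corollary up to the overall factor $(-1)^m$.

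It then remains to check that the two ``$\tbinom{5}{\,\cdot\,}$''-type contributions cancel: the correction term $-2S_1$ coming from \eqref{elem1}, where $S_1:=\sum_{s\equiv q,\,l\le s\le l+5}\sum_{k=0}^{s-l-1}(-1)^{s-l-k}\binom{5}{s-l-k-1}e^{(k+l)p}$, together with $-2$ times the right-hand side of \eqref{elem2} taken at $q+5$. For this I would reindex $S_1$: substituting $a=s-l-k-1$, then $s'=s-1$ (noting that the $s'=l-1$ summand is empty and that $q-1\equiv q+5\bmod 3$), and finally $k'=s'-l-a$, one rewrites $S_1$ as minus the body of \eqref{elem2} evaluated at $q+5$. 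Hence $-2S_1+(-2)(-S_1)=0$, the two correction terms cancel, and multiplying the surviving $\tbinom{6}{\,\cdot\,}$-part by $(-1)^m$ yields the asserted identity.

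The computation presents no genuine obstacle; the one delicate point is the triple reindexing identifying the $\tbinom{5}{\,\cdot\,}$-correction of \eqref{elem1} with minus the body of \eqref{elem2}. There one must carefully track the contraction of the summation window $\{l,\dots,l+5\}$ to $\{l,\dots,l+4\}$, the congruence shift $q\leadsto q-1\equiv q+5\bmod 3$, and the vanishing boundary summand, along with the trivial sign/symmetry identity for $\tbinom{6}{\,\cdot\,}$. With these settled, the $\tbinom{6}{\,\cdot\,}$-terms assemble automatically into the coefficient $\tfrac{s-q-6}{3}$, completing the proof.
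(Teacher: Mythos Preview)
Your proof is correct and follows exactly the approach the paper intends: the corollary is stated without proof immediately after the proposition containing \eqref{elem1}--\eqref{elem3}, and the implicit argument is precisely to apply those three formulas to the three summands in the expansion of $\ch(\mathcal K_-(q)[m])$ and combine. In particular, your key reindexing identity $-S_1=(\text{\eqref{elem2} at }q+5)$ is the same cancellation the paper relies on (and in fact spells out in a commented-out passage), so there is no meaningful difference between your route and the paper's.
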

\subsection{Conclusion}

\begin{prop}\label{mp}
	For any integers $q$, $m$ and $t$, we have
	\begin{equation}
	\mathbb U_t\left(\ch(\mathcal K_-(q)[m]) \right)=\ch(\Orl_{t-3}(\mathcal K_-(q)[m]))\cdot e^{-3p}.
	\end{equation}
\end{prop}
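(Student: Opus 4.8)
The plan is to establish Proposition~\ref{mp} by a direct comparison of the two closed-form expressions obtained above, namely
$$\ch(\Orl_t(\mathcal K_-(q)[m]))=(-1)^m\sum_{\substack{t-3\le s\le t+2\\ s\equiv q\bmod 3}}\frac{s-q}{3}\sum_{k=0}^{t-s+2}(-1)^{k+1}\binom{6}{k}e^{(k+s+3)p}$$
(established in the course of proving the theorem above) together with the formula for $\mathbb U_t(\ch(\mathcal K_-(q)[m]))$ from the corollary. Both sides are elements of $H_{\GW}=\C\mathds{1}\oplus\C p\oplus\C p^2\oplus\C p^3$, so each $e^{jp}$ is read modulo $p^4$; the only relation we will need is the dimension identity $(1-e^p)^6=\sum_{i=0}^{6}(-1)^i\binom{6}{i}e^{ip}=0$ in $H_{\GW}$, already used in the proof of the theorem.

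First I would bring the right-hand side $\ch(\Orl_{t-3}(\mathcal K_-(q)[m]))\cdot e^{-3p}$ into a comparable shape: substitute $t\mapsto t-3$ in the displayed formula, multiply by $e^{-3p}$, replace the summation index $k$ by the exponent $j=k+s$ in the inner sum, and then reindex the outer variable by $s\mapsto s+6$. These three moves are arranged so that, afterwards, the outer index $s$ again runs over the residue class of $q$ modulo $3$ inside $[t,t+5]$, the prefactor becomes $\tfrac{s-q-6}{3}$, and the inner sum is $\sum_{j=s-6}^{t-1}(-1)^{j-s+1}\binom{6}{j+6-s}e^{jp}$. On the other side, the substitution $j=k+t$ in the corollary's formula for $\mathbb U_t(\ch(\mathcal K_-(q)[m]))$ produces the same outer index set, the same prefactor $\tfrac{s-q-6}{3}$, and the inner sum $\sum_{j=t}^{s}(-1)^{j-s}\binom{6}{j+6-s}e^{jp}$.

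It then remains to compare the two expressions term by term in $s$. For a fixed $s$ in the common index set, the inner ranges $[s-6,t-1]$ and $[t,s]$ are disjoint and consecutive, with union the block $[s-6,s]$ of seven integers, so, absorbing the extra sign in the first sum, the difference of the two inner sums equals
$$-\sum_{j=s-6}^{s}(-1)^{j-s}\binom{6}{j+6-s}e^{jp}=-e^{(s-6)p}\sum_{i=0}^{6}(-1)^{i}\binom{6}{i}e^{ip}=-e^{(s-6)p}(1-e^p)^6,$$
where in the last step one puts $i=j-s+6$. Since $(1-e^p)^6=0$ in $H_{\GW}$, this vanishes for every $s$, and therefore $\mathbb U_t(\ch(\mathcal K_-(q)[m]))=\ch(\Orl_{t-3}(\mathcal K_-(q)[m]))\cdot e^{-3p}$.

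The argument is purely combinatorial; the only point requiring care is the index bookkeeping, where one must check that the shift $t\mapsto t-3$, the twist by $e^{-3p}$, and the reindexing $s\mapsto s+6$ together support the two sums on exactly the same monomials $e^{jp}$, and then identify the residual discrepancy as $-e^{(s-6)p}(1-e^p)^6$ — which vanishes only because $X_{3,3}$ is a threefold, i.e.\ $p^4=0$. No geometric input beyond that dimension bound is needed.
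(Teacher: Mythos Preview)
Your proposal is correct and is essentially the paper's own proof: both arguments reduce to the single identity $(1-e^p)^6=0$ in $H_{\GW}$, used to match the two inner sums whose binomial indices cover complementary halves of $\{0,\dots,6\}$. The paper transforms $\mathbb U_t(\ch(\mathcal K_-(q)[m]))$ directly into $\ch(\Orl_{t-3}(\mathcal K_-(q)[m]))\cdot e^{-3p}$ by the same reindexings you perform (swap the range of $k$ via $(1-e^p)^6=0$, then shift $s\mapsto s-6$), whereas you bring both sides to a common shape and exhibit their difference as $-e^{(s-6)p}(1-e^p)^6$; this is the same computation organized slightly differently.
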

\begin{proof}
	Using the fact that 
	$$
	0=(1-e^p)^6=\sum_{k=0}^6(-1)^k\binom{6}{k}e^{kp},
	$$
	we compute
	\begin{equation*}
	\begin{split}
	\mathbb U_t\left(\ch(\mathcal K_-(q)[m]) \right)&=(-1)^m\sum_{\substack{	s\equiv q \mod 3\\t\le s \le t+5}}\frac{s-q-6}{3}\sum_{k=0}^{s-t}(-1)^{t+k+6-s}\begin{pmatrix}
	6\\t+k+6-s
	\end{pmatrix}e^{(k+t)p}\\
	&=(-1)^m\sum_{\substack{	s\equiv q \mod 3\\t\le s \le t+5}}\frac{s-q-6}{3}\sum_{k=s-t-6}^{-1}(-1)^{t+k+5-s}\begin{pmatrix}
	6\\t+k+6-s
	\end{pmatrix}e^{(k+t)p}\\
		&=(-1)^m\sum_{\substack{	s\equiv q \mod 3\\t\le s \le t+5}}\frac{s-q-6}{3}\sum_{k=0}^{t-s+5}(-1)^{k+1}\begin{pmatrix}
	6\\k
	\end{pmatrix}e^{(k+s-6)p}\\
		&=(-1)^m\sum_{\substack{	s\equiv q \mod 3\\t-6\le s \le t-1}}\frac{s-q}{3}\sum_{k=0}^{t-s-1}(-1)^{k+1}\begin{pmatrix}
	6\\k
	\end{pmatrix}e^{(k+s)p}\\
	&=	\ch(\Orl_{t-3}(\mathcal K_-(q)[m]))\cdot e^{-3p}
	\end{split}
	\end{equation*}	
\end{proof}
The functor 
$$
\otimes \mathcal O(-3)\colon \Db(X_{3,3})\to \Db(X_{3,3})
$$	
given by 
$$
\mathcal E \mapsto \mathcal E\otimes \mathcal O(-3)
$$
is an equivalence of triangulated category, so is the composition
$$
\otimes \mathcal O(-3)\circ\Orl_{t-3}\colon \DMF^\CR(X_-,W) \to \Db(X_{3,3}).
$$
Then proposition \ref{mp} can be interpreted as
\begin{thm}\label{maintheorem}
	Let $\mathcal G$ be the subcategory of $\DMF^\CR(X_-,W)$ generated by $\{\mathcal K_-(q)[m]\}_{q,m\in \mathbb Z}$, then for any $t\in \mathbb Z$, we have the following commutative diagram
	\begin{equation*}
	\xymatrix{
	&\mathcal G \ar[rrr]^{\otimes \mathcal O(-3)\circ\Orl_{t-3}}\ar[d]_{\ch}& &&\Db(X_{3,3})\ar[d]^{\ch}\\
	 	 &H_{\RW} \ar[rrr]^{\mathbb U_t}&& &H_{\GW}.
	}
\end{equation*}
\end{thm}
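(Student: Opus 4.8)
The plan is to obtain Theorem \ref{maintheorem} as an essentially formal consequence of Proposition \ref{mp}: all the analytic and homological input is already in place, so what remains is to pass from commutativity on the generating objects $\mathcal{K}_-(q)[m]$ to commutativity on the whole subcategory $\mathcal{G}$ by observing that every arrow in the square factors through the relevant Grothendieck group.

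Concretely, I would first record that each of the four arrows is compatible with the appropriate additive structure. The functor $\Orl_{t-3}\colon\DMF^\CR(X_-,W)\to\Db(X_{3,3})$ is an equivalence of triangulated categories by its construction in \S \ref{secorlovdef}, and $\otimes\,\mathcal{O}(-3)\colon\Db(X_{3,3})\to\Db(X_{3,3})$ is an exact autoequivalence, so the top arrow sends distinguished triangles to distinguished triangles and induces a homomorphism on $K_0$; the Chern character on $\Db(X_{3,3})$ is additive on triangles, hence factors through $K_0(\Db(X_{3,3}))$; and $\mathbb{U}_t$ is $\mathbb{C}$-linear. The one point that genuinely needs care is the left vertical arrow, because in the complete intersection case the $H_{\RW}$-valued Chern character is not defined via an identification of $HH(\mathcal{MF}^\CR(X_-,W))$ with $H_{\RW}$ but only through Grothendieck--Riemann--Roth for objects pushed forward from $\mathbb{P}(3,3)$. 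Here I would use Remark \ref{rmkkj}: each generator satisfies $\mathcal{K}_-(q)[m]=j_*\bigl(\mathcal{O}_{\mathbb{P}(3,3)}(-q-6)[m-6]\bigr)$ for the exact closed pushforward $j_*\colon\Db(\mathbb{P}(3,3))\to\DMF^\CR(X_-,W)$, and since $j_*$ commutes with shifts, cones and direct summands, every object of $\mathcal{G}$ is, up to isomorphism, in the image of $j_*$; on such objects the $H_{\RW}$-valued Chern character of \S \ref{seccom} is by definition the composite of the additive Chern character of $\Db(\mathbb{P}(3,3))$ with $j_*$ and a fixed multiplication operator (the Todd correction), so it too is additive on the triangles of $\mathcal{G}$ and factors through $K_0(\mathcal{G})$. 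I expect this verification — that $\mathcal{G}$ can be set up so that $K_0(\mathcal{G})$ is generated by the $[\mathcal{K}_-(q)[m]]$ and that $\ch$ and $\Orl_{t-3}$ behave on $\mathcal{G}$ as claimed — to be the main, though modest, obstacle.

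Granting this, both composites around the square descend to group homomorphisms $K_0(\mathcal{G})\to H_{\GW}$, so it suffices to check they agree on the generating classes $[\mathcal{K}_-(q)[m]]$. Writing $\mathcal{F}=\mathcal{K}_-(q)[m]$, the top--right route gives
\[
\ch\bigl(\Orl_{t-3}(\mathcal{F})\otimes\mathcal{O}(-3)\bigr)=\ch\bigl(\Orl_{t-3}(\mathcal{F})\bigr)\cdot\ch(\mathcal{O}(-3))=\ch\bigl(\Orl_{t-3}(\mathcal{F})\bigr)\cdot e^{-3p},
\]
while the left--bottom route gives $\mathbb{U}_t\bigl(\ch(\mathcal{F})\bigr)$, and these two expressions coincide by Proposition \ref{mp}. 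Hence the square commutes on all of $\mathcal{G}$. Everything substantive — the analytic continuation of Theorem \ref{maincomputation}, the computation of $\ch(\Orl_t(\mathcal{K}_-(q)[m]))$, and the evaluation of $\mathbb{U}_l$ on $\ch(\mathcal{K}_-(q)[m])$ — has already been done, so the proof is complete once the $K_0$-formalism above is made precise.
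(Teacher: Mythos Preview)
Your proposal is correct and follows the same approach as the paper: the theorem is presented there simply as a reinterpretation of Proposition \ref{mp}, with no further argument given. You supply considerably more detail than the paper does---in particular the reduction to $K_0$ and the care taken with the definition of the left-hand Chern character via Remark \ref{rmkkj}---but the underlying idea is identical.
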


\begin{appendices}
\section{Orbifold cohomology and characteristic classes}
\subsection{Chen--Ruan cohomology}
Let $\mathcal X$ be a smooth Deligne-Mumford stack over $\mathbb C$, and let $I\mathcal X$ be the inertia stack of $\mathcal X$. A point on $I\mathcal X$ is given by a pair $(x,g)$ of a point $x\in \mathcal X$ and $g\in \operatorname{Aut}(x)$. Let $T$ be the index set of components of $I\mathcal X$, then
$$
I\mathcal X=\bigsqcup_{v\in T} \mathcal X_v.
$$
Take a point $(x,g)\in I\mathcal X$ and let
$$
T_x\mathcal X= \bigoplus_{0\le f <1}(T_x\mathcal X)_f
$$
be the eigenvalue decomposition of $T_x\mathcal X$ with respect to the action given by $g$, where $g$ acts on $(T_x\mathcal X)_f$ by $e^{2\pi i f}$. We define 
$$
a_{(x,g)}=\sum_{0\le f <1} f \dim (T_x\mathcal X)_f.
$$
This number is independent of the choice of $(x,g)\in \mathcal X_v$, so we can associate a rational number $a_v$ to each connected component $\mathcal X_v$ of $I\mathcal X$. This is called age shifting number.
\begin{dfn}
	The \textit{Chen--Ruan cohomology group} of $\mathcal X$  is the sum of the singular cohomology of $\mathcal X_v, v\in T$, together with the age shift in gradings:
	$$
	H_{\text{CR}}^k(\mathcal X):=\bigoplus_{v\in T} H^{k-2a_v}(\mathcal X_v,\mathbb C).
	$$
\end{dfn}
\subsection{Characteristic classes}
For an orbifold vector bundle $\tilde E$ on the inertia stack $I\mathcal X$, we have an eigenbundle decomposition of  $\tilde E\vert_{\mathcal X_v }$
$$
\tilde E\vert_{\mathcal X_v }=\bigoplus_{0\le f <1} \tilde E_{v,f}
$$
with respect to the action of the stabilizer of $\mathcal X_v$, where $\tilde E_{v,f}$ is the subbundle with eigenvalue $e^{2\pi i f}$. Let $\operatorname{pr}\colon I\mathcal X\to \mathcal X$ be the projection. For an orbifold vector bundle $E$ on $\mathcal X$, let $\{\delta_{v,f,i}\}_{1\le i\le l_{v,f}}$ be the Chern roots of $(\operatorname{pr}^*E)_{v,f}$, where $l_{v,f}$ is the dimension of $(\operatorname{pr}^*E)_{v,f}$.
\begin{dfn} We define some $H_{\text{CR}}^*(\mathcal X)$-value characteristic classes of an orbifold vector bundle $E$ on $\mathcal X$:
	\begin{itemize}{}
		\item
		The \textit{Chern character} of $E$ is defined by
		$$
		\ch (E):=\bigoplus_{v\in T}\sum_{0\le f <1}e^{2\pi i f}\ch\left((\operatorname{pr}^*E)_{v,f}\right).
		$$
		\item
		The \textit{Todd class} of $V$ is defined by
		$$
		\operatorname{Td}(E):=\bigoplus_{v\in T}\prod_{\substack{0<f<1\\1\le i\le l_{v,f}}}\frac{1}{1-e^{-2\pi if}e^{-\delta_{v,f,i}}}\prod_{1\le i\le l_{v,0}}\frac{\delta_{v,0,i}}{1-e^{-\delta_{v,0,i}}}.
		$$
		\item
		The \textit{Gamma class} of $E$ is defined in \cite{iritani2009integral}, which is 
		$$
		\Gamma(E):=\bigoplus_{v\in T}\prod_{0\le f<1}\prod_{i=1}^{l_{v,f}}\Gamma(1-f+\delta_{v,f,i}),
		$$
		the Gamma function on the right-hand side should be expand in series at $1-f>0$. 
	\end{itemize}
\end{dfn}
\end{appendices}
\printbibliography

\vspace{+16 pt}
\noindent Institut de Math\'ematiques de Jussieu -- Paris Rive Gauche \\
\noindent Sorbonne Universit\'e, UMR 7586 CNRS,\\
\noindent yizhen.zhao@imj-prg.fr 
\end{document}